\newcommand{\sbs}[1]{_{(#1)}}
\newcommand{\charf }{\mathrm{char}\,\fie }
\newcommand{\BG}[1]{\mathbb{B}_{#1}}
\newcommand{\C}{\mathbb{C}}
\newcommand{\N}{\mathbb{N}}
\newcommand{\F}{\mathbb{F}}
\newcommand{\cO}{\mathcal{O}}
\newcommand{\dimker}{\dim\ker(1+c_{12}+c_{12}c_{23})}
\newcommand{\End }{\mathrm{End}}
\newcommand{\fie}{\Bbbk}
\newcommand{\Hilb}{\mathcal{H}}
\newcommand{\id}{\mathrm{id}}
\newcommand{\imm}[1]{\mathrm{imm}_{#1}}
\newcommand{\Inn}{\mathrm{Inn}}
\newcommand{\lcoa}{\delta }
\newcommand{\ndN }{\mathbb{N}}
\newcommand{\ndQ }{\mathbb{Q}}
\newcommand{\ndZ }{\mathbb{Z}}
\newcommand{\nro}[1]{l^{(3)}_{#1}}
\newcommand{\reG }[1]{\overline{G}_{#1}}
\newcommand{\rker}{\ker(1+c_{12}+c_{12}c_{23})|_{V_{\cO}^{\otimes3}}}
\newcommand{\rdimker}{\dim\rker }
\newcommand{\rk}{\mathrm{rank}\,}
\newcommand{\SG}[1]{\mathbb{S}_{#1}}
\newcommand{\supp }{\mathrm{supp}\,}
\newcommand{\toba}{\mathfrak{B}}
\newcommand{\trid}{\triangleright}
\newcommand{\tup}{\bar{x}}
\newcommand{\ydG}{ {}_{\fie G}^{\fie G}\mathcal{YD}}
\newcommand{\ttriangle}[6]{
\ncline[linestyle=dotted]{->}{#1,#2}{#3,#4}
\ncline[linestyle=dotted]{->}{#3,#4}{#5,#6}
\ncline[linestyle=dotted]{->}{#5,#6}{#1,#2}
}
\newcommand{\striangle}[6]{
\ncline{->}{#1,#2}{#3,#4}
\ncline{->}{#3,#4}{#5,#6}
\ncline{->}{#5,#6}{#1,#2}
}
\theoremstyle{plain}
\newtheorem{theorem}{Theorem}
\newtheorem{lemma}[theorem]{Lemma}
\newtheorem{prop}[theorem]{Proposition}
\newtheorem{corollary}[theorem]{Corollary}
\theoremstyle{definition}
\newtheorem{definition}{Definition}
\newtheorem{example}{Example}
\theoremstyle{remark}
\newtheorem{remark}{Remark}
\begin{document}

\title[Braided racks, Hurwitz actions, Nichols algebras]{Braided racks,
Hurwitz actions and Nichols algebras with many cubic relations}

\author{I. Heckenberger}
\address{ 
Philipps-Universit\"at Marburg\\
FB Mathematik und Informatik \\ 
Hans-Meerwein-Stra\ss e\\
35032 Marburg, Germany}
\email{heckenberger@mathematik.uni-marburg.de}
\author{
A. Lochmann}
\address{ 
Philipps-Universit\"at Marburg\\ 
FB Mathematik und Informatik \\
Hans-Meerwein-Stra\ss e\\
35032 Marburg, Germany}
\email{lochmann@mathematik.uni-marburg.de}
\author{
L. Vendramin}
\address{
Depto. de Matem\'atica, FCEyN\\
Universidad de Buenos Aires\\
Pab. 1, Ciudad Universitaria (1428)\\
Buenos Aires, Argentina}
\email{lvendramin@dm.uba.ar}


\setcounter{tocdepth}{1}

%
%


\subjclass[2010]{16T05; 20F36}

\maketitle

\begin{abstract}
We classify Nichols algebras of irreducible Yetter-Drinfeld modules over groups
such that the underlying rack is braided and the homogeneous component of
degree three of the Nichols algebra satisfies a given inequality. This
assumption turns out to be equivalent to a factorization assumption on the
Hilbert series.  Besides the known Nichols algebras we obtain a new example.
Our method is based on a combinatorial invariant of the Hurwitz orbits with
respect to the action of the braid group on three strands. 
\end{abstract}

\section*{Introduction}

Since its introduction in 1998 by Andruskiewitsch and Schneider, the Lifting
Method \cite{MR1659895} grew to one of the most powerful and most fruitful
methods to study Hopf algebras \cite{MR1780094}, \cite{MR1952612},
\cite{MR2176141}, \cite{MR2504492}, \cite{MR2651563}, \cite{MR2678630},
\cite{MR2732981}, \cite{MR2630042}, \cite{MPSW}, \cite{GG}, \cite{M}. Although
it originates from a purely Hopf algebraic problem, the method quickly showed a
strong relationship with other areas of mathematics such as
\begin{itemize}
  \item quantum groups \cite{MR1632802}, \cite{MR2630042},
  \item noncommutative differential geometry \cite{MR994499}, \cite{MR1396857},
	  \cite{MR2106930}, \cite{MR1492989},
  \item knot theory \cite{MR1470954}, \cite{MR1990571}, \cite{MR1918807},
  \item combinatorics of root systems and Weyl groups \cite{MR2462836},
	  \cite{AHS}, \cite{Ang},
  \item Lyndon words \cite{MR1763385}, \cite{MR2331765}, \cite{MR2491909},
  \item cohomology of flag varieties \cite{MR1667680}, \cite{MR2209265},
	  \cite{MR2558748},
  \item projective representations \cite{twisting},
  \item conformal field theory \cite{MR2030633}, \cite{ST}.
\end{itemize}
The heart of the Lifting Method is formed by
the structure theory of Nichols algebras.
Nichols algebras have been studied first by Nichols \cite{MR0506406}.
These are connected graded braided Hopf algebras \cite{MR1907185}
generated by primitive elements, and
all primitive elements are of degree one.
If the braiding is trivial and the
base field has characteristic $0$, the Nichols algebra is a polynomial ring.
The situation becomes much more complicated for non-trivial braidings. A major
problem, which is open since the introduction of the Lifting Method, is the
classification of finite-dimensional Nichols algebras over groups
\cite[Questions\,5.53, 5.57]{MR1907185}.
Under the additional assumption that the base field has characteristic $0$
and the group is abelian,
this problem was completely solved in \cite{MR2207786,MR2462836} using Lie theoretic structures.
A generalization of this theory to arbitrary groups is possible
\cite{AHS,MR2734956} and opens new research directions \cite{HS}. Nevertheless,
the problem of classifying finite-dimensional Nichols algebras of irreducible
Yetter-Drinfeld modules over non-abelian groups cannot be attacked with this method.
One needs a fundamentally new idea. One approach in this direction is to
identify finite groups admitting (almost) only infinite-dimensional Nichols algebras.
Here a remarkable progress could be achieved
for sporadic simple groups and for alternating groups \cite{AMPA,sporadic}.
Despite these developments, the structure of important examples of Nichols algebras,
for example those associated with the transpositions of the symmetric groups,
remained unknown since more than 10 years \cite{MR1667680}, \cite{MR1800714}, \cite{AMPA}.

So far only a few finite-dimensional Nichols algebras of irreducible Yetter-Drinfeld modules
over non-abelian groups are known. These examples have an interesting
property in common: the Hilbert series of the Nichols algebras factorize into
the product of polynomials of the form $1+t^r+t^{2r}+\dots +t^{nr}$ with $r,n\ge1 $.
A theoretical explanation of this fact is not known.
Motivated by this observation, in \cite{Marburg} M.~Gra\~na and the first and the
last authors classified
finite-dimensional Nichols algebras over groups
with many quadratic relations. This
corresponds to a factorization of the Hilbert series, where only $r=1$
appears.
After the publication of the paper some other examples appeared which require
to allow $r>1$. In our paper we attack the case $r\le 2$. We consider in
detail the Hurwitz orbits with respect to the action of the braid group
$\BG 3$ on $X^3$, where $X$ is the support of the Yetter-Drinfeld module.
For such orbits,
we obtain an estimate on the kernel of the shuffle map
using graph theoretical structures closely related to those in percolation
theory \cite{MR2594903}, \cite{MR2650054}.
Such structures are known to be very complicated.
Since we are forced to perform very sensitive calculations, we
concentrate on braided racks, see Definition~\ref{def:braided}.
We obtain all known examples of
finite-dimensional Nichols algebras of irreducible Yetter-Drinfeld modules
over non-abelian groups except those
over the affine racks with $5$ elements (which are not braided), and we also get two new examples.
In principal, our method allows us to
consider arbitrary racks, but to do so we will need additional improvements of
the general theory.

Our approach has the advantage that it works for all groups and it produces
quickly all known examples. Surprisingly, during our calculations we never
met any examples of Nichols algebras
which satisfy our assumption but are not known to be
finite-dimensional. Although there exist many indecomposable braided racks,
for example conjugacy classes of $3$-transpositions,
we do not use difficult classification results
such as the classification of $3$-transposition groups
\cite{MR0294487}, or \cite{MR0311765}.

The structure of our paper is as follows. In Section~\ref{sec:BG} we recall
the fundamental notions related to racks with particular emphasis on braided
racks, see Definition~\ref{def:braided}. We recall the Hurwitz action of the
braid group. The orbits of this action play a fundamental role in our
approach. In Proposition~\ref{pro:sizes} we determine the Hurwitz orbits in
$X^3$ for braided racks $X$. The structure of Hurwitz orbits is in general not
known. This is one of the reasons we study braided racks first. In
Section~\ref{sec:BG} we also define and determine the immunity of the Hurwitz
orbits. This will be a crucial ingredient for our classification theorem.

In Section~\ref{sec:NA} we formulate our main theorem concerning Nichols
algebras with many cubic relations. With Propositions~\ref{pro:orbit1} and
\ref{pro:orbit8} we give detailed information on the kernel of the quantum
shuffle map restricted to orbits of size $1$ and $8$. This information will
help us to obtain a condition in Proposition~\ref{pro:gen_ineq}
allowing us to concentrate on a few braided racks. These racks are
classified in Sections \ref{sec:deg2}, \ref{sec:deg4} and \ref{sec:deg36}.
In Section~\ref{sec:deg2} we also mention and use an interesting connection to
$3$-transposition groups \cite{MR0294487}, \cite{MR1423599}.
In Section~\ref{sec:proof} we collect the information obtained in the previous
sections to prove our main theorem. We consider the remaining racks and the
corresponding Nichols algebras case by case. Our careful preparations allow us
to succeed with the proof without using any technical assumptions.

In two appendices we collect tables with information on the racks and
the Nichols algebras found and we display Hurwitz orbits graphically.

\section{Braid groups, racks and Hurwitz actions}
\label{sec:BG}

\subsection{Racks}

We recall basic notions and facts about racks. For additional information we refer to \cite{MR1994219}.
A \textit{rack} is a pair $(X,\trid)$, where $X$ is a non-empty
set and $\trid:X\times X\to X$ is a map (considered as a binary operation on $X$) such that
\begin{enumerate}
  \item the map $\varphi_i:X\to X$, where $x\mapsto i\trid x$, is bijective for all $i\in X$, and
  \item $i\trid(j\trid k)=(i\trid j)\trid(i\trid k)$ for all $i,j,k\in X$.
\end{enumerate}
For all $n\in\N$ and $i,j\in X$ we write $i\trid^n j=\varphi_{i}^n(j)$.

A rack $(X,\trid )$, or shortly $X$, is a \textit{quandle} if $i\trid i=i$ for all $i\in X$.
A \textit{subrack} of a rack $X$ is a non-empty subset $Y\subseteq X$ such that
$(Y,\trid)$ is also a rack.  
The \textit{inner group} of a rack $X$ is the group generated by the
permutations $\varphi_i$ of $X$, where $i\in X$. We write $\Inn(X)$ for the
inner group of $X$.  A rack is said to be \emph{faithful} if the map
\begin{align}
\varphi :X\to\Inn (X),\qquad
i\mapsto\varphi_i,
\label{eq:varphi}
\end{align}
is injective.

\begin{remark}
Let $X$ be a rack. Then 
\begin{gather}
\label{eq:conjugation}
\varphi_{i\trid j}=\varphi_i\varphi_j\varphi_i^{-1}
\end{gather}
for all $i,j\in X$.
\end{remark}

We say that a rack $X$ is \textit{indecomposable} if the inner group $\Inn(X)$
acts transitively on $X$. Also, $X$ is \textit{decomposable} if it is not
indecomposable. Any finite rack $X$ is the disjoint union of indecomposable
subracks \cite[Prop.\,1.17]{MR1994219} called the \textit{components of} $X$.

Let $(X,\trid)$ and $(Y,\trid)$ be racks.
A map $f:X\to Y$ is
a \textit{morphism} of racks if $f(i\trid j)=f(i)\trid f(j)$ for all $i,j\in
X$.

\begin{example}
\label{exa:racks}
A group $G$ is a rack with $x\trid y=xyx^{-1}$ for all $x,y\in G$. 
If a subset $X\subseteq G$ is stable under conjugation by $G$, then it is a
subrack of $G$.
In particular, we list the following examples.
\begin{enumerate}
\item The rack given by the conjugacy class of involutions in $G=\mathbb{D}_p$, the dihedral
group with $2p$ elements, has $p$ elements. It is called the \textit{dihedral rack} (of order $p$) 
and will be denoted by $\mathbb{D}_p$.
\item The rack $\mathcal{T}$ is the rack associated to the conjugacy class of $(2\,3\,4)$
in $\mathbb{A}_4$. This is the rack associated with the vertices of the
tetrahedron, see \cite[\S 1.3.4]{MR1994219}.
\item The rack $\mathcal{A}$ is the rack associated to the conjugacy class of $(1\,2)$ in $\SG 4$.
\item The rack $\mathcal{B}$ is the rack associated to the conjugacy class of
  $(1\,2\,3\,4)$ in $\SG 4$.
\item The rack $\mathcal{C}$ is the rack associated to the conjugacy class of
  $(1\,2)$ in $\SG 5$.
\end{enumerate}
\end{example}

\begin{example}
The racks $\mathbb{D}_p$ ($p$ a prime number), $\mathcal{T}$, $\mathcal{A}$, $\mathcal{B}$,
$\mathcal{C}$ are faithful and indecomposable.
\end{example}

\begin{example}
\label{exa:affine}
Let $A$ be an abelian group and let $X=A$. For any $g\in\mathrm{Aut}(A)$ we have a
rack structure on $X$ given by 
\[
x\trid y=(1-g)x+gy
\]
for all $x,y\in X$.  This rack is called the \textit{affine rack} associated to
the pair $(A,g)$ and will be denoted by $\mathrm{Aff}(A,g)$. In particular, let
$p$ be a prime number, $q$ a power of $p$ and $\alpha\in\F_q\setminus\{0\}$.
We write $\mathrm{Aff}(\mathbb{F}_q, \alpha)$, or simply
$\mathrm{Aff}(q,\alpha)$, for the affine rack $\mathrm{Aff}(A,g)$, where
$A=\mathbb{F}_q$ and $g$ is the automorphism given by $x\mapsto\alpha x$ for
all $x\in\F_q$.
\end{example}

\begin{example}
A finite affine rack $(A,g)$ is faithful if and only if it is indecomposable,
see \cite[\S 1.3.8]{MR1994219}.
\end{example}

\begin{remark}
\label{rem:inner}
Let $X$ be a finite rack and assume that
$\Inn(X)$ acts transitively on $X$. Then for all $i,j\in
X$ there exist $r\in\mathbb{N}$ and
$k_1,k_2,\dots ,k_r\in X$ such that
$\varphi^{\pm1}_{k_1}\varphi^{\pm1}_{k_2}\cdots\varphi^{\pm1}_{k_r}(i)=j$.
Equation~\eqref{eq:conjugation} implies that
all permutations $\varphi_i$, where $i\in X$, have the
same cycle structure.
\end{remark}

\begin{lemma} \cite[Lemma\,1.14]{MR1994219}
\label{lem:rack_indecomposable}
Let $X$ be a rack, and let
$Y$ be a non-empty proper finite subset of $X$. The following are equivalent.
\begin{enumerate}
\item $Y$ and $X\setminus Y$ are subracks of $X$.
\item $X\trid Y\subseteq Y$. 
\end{enumerate}
\end{lemma}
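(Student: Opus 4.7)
The plan is to argue each implication directly from the bijection axiom for racks, using finiteness of $Y$ only to upgrade injectivity of $\varphi_x|_Y$ to surjectivity. Self-distributivity is inherited by every subset, so the only nontrivial content in ``$Y$ is a subrack'' is closure under $\trid$ together with $\varphi_i|_Y$ being a bijection for every $i\in Y$.

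For (1)$\Rightarrow$(2), I would fix $x\in X$ and $y\in Y$ and split on whether $x$ lies in $Y$ or in $X\setminus Y$. The case $x\in Y$ is immediate from closure of $Y$ under $\trid$. If $x\in X\setminus Y$, the subrack condition on $X\setminus Y$ says $\varphi_x$ restricts to a bijection $X\setminus Y\to X\setminus Y$, so $\varphi_x(X\setminus Y)=X\setminus Y$. Combining this with the fact that $\varphi_x$ is a bijection on all of $X$ gives $\varphi_x(Y)\subseteq Y$, hence $x\trid y\in Y$.

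For (2)$\Rightarrow$(1), the hypothesis $X\trid Y\subseteq Y$ gives $\varphi_x(Y)\subseteq Y$ for every $x\in X$. Since $\varphi_x$ is injective on $X$ it is injective on $Y$, and because $Y$ is finite it then maps $Y$ bijectively onto itself. Applied with $x\in Y$, this shows $Y$ is a subrack. Applied with arbitrary $x\in X$ together with $\varphi_x$ being a bijection of $X$, the equality $\varphi_x(Y)=Y$ forces $\varphi_x(X\setminus Y)=X\setminus Y$; specializing to $x\in X\setminus Y$ then shows $X\setminus Y$ is closed under $\trid$ and that $\varphi_x$ restricts to a bijection of $X\setminus Y$, so $X\setminus Y$ is a subrack.

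The only delicate point — and the reason finiteness of $Y$ appears in the hypothesis — is passing from the inclusion $\varphi_x(Y)\subseteq Y$ to the equality $\varphi_x(Y)=Y$, which is exactly what ensures that both $\varphi_x|_Y$ and $\varphi_x|_{X\setminus Y}$ are bijections. There is no real obstacle beyond bookkeeping the two cases in direction (1)$\Rightarrow$(2) and exploiting this finite pigeonhole argument in direction (2)$\Rightarrow$(1).
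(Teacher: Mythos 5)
Your proof is correct: the case split in (1)$\Rightarrow$(2) using $\varphi_x(X\setminus Y)=X\setminus Y$ for $x\in X\setminus Y$, and the finite pigeonhole upgrade of $\varphi_x(Y)\subseteq Y$ to $\varphi_x(Y)=Y$ in (2)$\Rightarrow$(1), are exactly what is needed, and you correctly isolate where finiteness of $Y$ enters. The paper itself gives no proof, citing \cite[Lemma\,1.14]{MR1994219} instead, and your argument is the standard one found there.
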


By \cite[Lemma 2.18]{Marburg} it is possible to define the degree of a finite
indecomposable rack.

\begin{definition}
The \emph{degree} of a finite indecomposable rack $X$ is the number
$\mathrm{ord}(\varphi_{x})$ for some (equivalently, all) $x\in X$.
\end{definition}

For any rack $X$ let $G_X$ denote its enveloping group
\begin{align}
 G_X=&\langle X\rangle /(xy=(x\trid y)x\text{ for all $x,y\in X$}).
\label{eq:reG}
\end{align}
For a finite indecomposable rack $X$ of degree $n$,
the \emph{finite enveloping group} of $X$ 
is defined as $\overline{G_{X}}=G_{X}/\langle x^{n}\rangle$, where $x\in X$.
This definition does not depend on the choice of $x\in X$,
see \cite[Lemma 2.18]{Marburg}.

%

\subsection{Braided racks}
 
\begin{definition}
\label{def:braided}
A rack $X$ is \emph{braided} if $X$ is a quandle and for all $x,y\in X$
at least one of the equations $x\trid(y\trid x)=y$, $x\trid y=y$ holds.
\end{definition}

\begin{lemma}
\label{lem:braided_cycle}
Let $X$ be a braided rack and let $x,y,z\in X$ such that $x\trid y=z$ and $z\ne
y$. Then $y\trid z=x$ and $z\trid x=y$. 
\end{lemma}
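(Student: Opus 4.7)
The plan is to use the braided rack dichotomy twice, once for the pair $(x,y)$ and once for the pair $(y,x)$, together with the quandle property and the conjugation identity $\varphi_{i\trid j}=\varphi_i\varphi_j\varphi_i^{-1}$ from \eqref{eq:conjugation}.

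First I would apply Definition~\ref{def:braided} to the ordered pair $(x,y)$. The alternative $x\trid y=y$ is ruled out by hypothesis because $x\trid y=z\ne y$, so I can conclude that
\[
x\trid(y\trid x)=y.
\]
This single identity will do most of the work.

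To obtain $z\trid x=y$, I would rewrite $z\trid x=(x\trid y)\trid x=\varphi_{x\trid y}(x)$ and use \eqref{eq:conjugation} to get $\varphi_x\varphi_y\varphi_x^{-1}(x)$. Since $X$ is a quandle we have $\varphi_x(x)=x$, hence also $\varphi_x^{-1}(x)=x$, and the expression collapses to $\varphi_x\varphi_y(x)=x\trid(y\trid x)$, which equals $y$ by the previous step.

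For $y\trid z=x$, I would apply Definition~\ref{def:braided} to the pair $(y,x)$. The first alternative $y\trid(x\trid y)=x$ is precisely $y\trid z=x$ and gives the claim immediately. The other alternative $y\trid x=x$ must be ruled out: substituting $y\trid x=x$ into the identity $x\trid(y\trid x)=y$ from the first step yields $x\trid x=y$, and by the quandle axiom this forces $y=x$, hence $z=x\trid y=x\trid x=x=y$, contradicting $z\ne y$. So the second alternative is impossible, and the proof is complete.

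The argument is essentially a bookkeeping exercise: there is no genuine obstacle, but one must be careful to track which of the two alternatives in the braided condition applies and to invoke the quandle axiom at the right moment so that the conjugation formula reduces to something computable.
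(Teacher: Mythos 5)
Your proof is correct and is exactly the kind of unpacking the paper intends: the paper's own proof is just the one-line remark that the lemma ``follows from Definition~\ref{def:braided}'', and your argument supplies the details (the dichotomy applied to $(x,y)$ and to $(y,x)$, the quandle axiom, and the identity \eqref{eq:conjugation}, which here is just self-distributivity $x\trid(y\trid x)=(x\trid y)\trid(x\trid x)=(x\trid y)\trid x$). Nothing further is needed.
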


\begin{proof}
This follows from Definition \ref{def:braided}.
\end{proof}

\begin{lemma}
\label{lem:crossed}
Let $X$ be a braided rack and let $x,y\in X$. 
\begin{enumerate}
	\item If $y\trid x=x$ then $x\trid y=y$.
	\item If $x\trid (y\trid x)=y$ then $y\trid (x\trid y)=x$.
\end{enumerate}
\end{lemma}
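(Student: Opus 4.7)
The plan is to exploit the dichotomy built into the definition of a braided rack: for every ordered pair $(x,y)\in X\times X$, at least one of $x\trid(y\trid x)=y$ or $x\trid y=y$ holds. In each of the two parts of the lemma I will apply this dichotomy to a carefully chosen ordered pair and then use the quandle axiom $x\trid x=x$ to collapse the unwanted alternative. Part (1) will be established first on its own, and then used in the proof of part (2).

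For part (1), I would assume $y\trid x=x$ and apply the braided axiom to the pair $(x,y)$. The second alternative of the axiom is exactly the desired conclusion $x\trid y=y$, so nothing needs to be done in that case. In the first alternative, $x\trid(y\trid x)=y$, I substitute the hypothesis $y\trid x=x$ to rewrite the left-hand side as $x\trid x$, which equals $x$ by the quandle axiom. Hence $x=y$, and then $x\trid y=x\trid x=x=y$ again by the quandle axiom.

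For part (2), I would assume $x\trid(y\trid x)=y$ and apply the braided axiom, this time to the pair $(y,x)$. One alternative is $y\trid(x\trid y)=x$, which is the desired conclusion. In the other alternative, $y\trid x=x$, part (1) applies (with the roles of $x$ and $y$ swapped) to give $x\trid y=y$. Feeding $y\trid x=x$ into the hypothesis $x\trid(y\trid x)=y$ gives $x\trid x=y$, and the quandle axiom forces $x=y$; then $y\trid(x\trid y)=y\trid y=y=x$.

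The main obstacle is not any technical difficulty but simply the bookkeeping needed to confirm that the two ``bad'' alternatives of the braided axiom collapse to the degenerate case $x=y$, in which the conclusion is trivially true. Once that is observed, no auxiliary results (not even Lemma~\ref{lem:braided_cycle}) are required, and the argument is a short case analysis resting entirely on Definition~\ref{def:braided} and the quandle identity.
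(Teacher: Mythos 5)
Your proof is correct and takes essentially the same approach as the paper's: both arguments observe that the only problematic case is when the two alternatives of Definition~\ref{def:braided} overlap (i.e.\ $x\trid(y\trid x)=y$ and $y\trid x=x$ hold simultaneously), in which case the quandle identity forces $x=y$ and both conclusions become trivial. The paper merely records this degenerate case and leaves the surrounding dichotomy implicit, whereas you spell it out; the content is identical.
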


\begin{proof}
	Assume that $x\trid (y\trid x)=y$ and $y\trid x=x$. Then
	$y=x\trid(y\trid x)=x\trid x=x$ and hence $x\trid y=y$, $y\trid(x\trid
	y)=x$. 
\end{proof}

\begin{lemma}
	Let $X$ be a quandle. The following are equivalent.
	\begin{enumerate}
		\item $X$ is braided.
		\item $x\trid(y\trid x)\in\{x,y\}$ for all $x,y\in X$.
	\end{enumerate}
\end{lemma}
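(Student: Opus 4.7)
The plan is to prove the two implications separately, with direction $(2) \Rightarrow (1)$ requiring more care than the other.

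For $(1) \Rightarrow (2)$, I would proceed directly from the definition of braided. Fix $x,y \in X$. By the braided hypothesis, either $x \trid (y \trid x) = y$, in which case the conclusion $x \trid (y \trid x) \in \{x,y\}$ is immediate, or $x \trid y = y$. In the latter case I would invoke Lemma \ref{lem:crossed}(1) with the roles of $x$ and $y$ interchanged to obtain $y \trid x = x$; then using the quandle axiom $x \trid x = x$ I conclude $x \trid (y \trid x) = x \trid x = x \in \{x,y\}$.

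For $(2) \Rightarrow (1)$, I fix $x,y \in X$ and assume $x \trid (y \trid x) \neq y$; the task is to prove $x \trid y = y$. By hypothesis $x \trid (y \trid x) = x$, and applying $\varphi_x^{-1}$ (bijective by the rack axiom) yields $y \trid x = x$. Now I apply hypothesis (2) to the pair $(y,x)$, which gives $y \trid (x \trid y) \in \{x,y\}$ and splits into two subcases. If $y \trid (x \trid y) = y$, then applying $\varphi_y^{-1}$ and using $\varphi_y(y)=y$ (from $y\trid y = y$) gives $x \trid y = y$, as wanted. If instead $y \trid (x \trid y) = x$, I rewrite this as $\varphi_y(x \trid y) = x$; since we already have $\varphi_y(x) = y \trid x = x$, injectivity of $\varphi_y$ forces $x \trid y = x$. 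But then $\varphi_x(y) = x = \varphi_x(x)$, so injectivity of $\varphi_x$ gives $y = x$, and hence $x \trid y = x \trid x = x = y$ trivially.

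The main obstacle I anticipate is the inner step of $(2) \Rightarrow (1)$: from $y \trid x = x$ one wants to conclude $x \trid y = y$. This is exactly Lemma \ref{lem:crossed}(1), but that lemma is proved under the very assumption (that $X$ is braided) which is being established here, so it cannot be reused. The workaround is precisely the second application of hypothesis (2) with swapped arguments, combined with injectivity of the translations $\varphi_i$ and the quandle identity. Once this symmetry trick is in hand, the rest is routine.
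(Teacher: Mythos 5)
Your proof is correct and follows essentially the same route as the paper: the same use of Lemma \ref{lem:crossed}(1) for (1)$\Rightarrow$(2), and the same case split on $y\trid(x\trid y)\in\{x,y\}$ for (2)$\Rightarrow$(1), ending with $x\trid y=x$ forcing $x=y$. The only cosmetic difference is that in the last subcase the paper derives $x\trid y=x$ via self-distributivity, $x=(y\trid x)\trid(y\trid y)=x\trid y$, whereas you get it from injectivity of $\varphi_y$; both are fine.
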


\begin{proof}
(1)$\Rightarrow$(2). If $x,y\in X$ with $x\trid (y\trid x)\ne y$ then $x\trid
y=y$. Hence $y\trid x=x$ by Lemma \ref{lem:crossed}. Thus $x\trid(y\trid
x)=x\trid x=x$. 

(2)$\Rightarrow$(1). Let $x,y\in X$. Then $x\trid(y\trid x)\in\{x,y\}$ and
$y\trid(x\trid y)\in\{x,y\}$.  We have to show that $x\trid(y\trid x)=y$ or
$x\trid y=y$. Assume that $x\trid(y\trid x)\ne y$. Then $x\trid (y\trid x)=x$
and hence $y\trid x=x$ since $X$ is a quandle. If $y\trid (x\trid y)=y$ then
$x\trid y=y$. If $y\trid(x\trid y)=x$ then $x=(y\trid x)\trid(y\trid y)=x\trid
y$ and hence $x=y$. Again it follows that $x\trid y=y$.
\end{proof}

\begin{lemma}
\label{lem:braided_is_faithful}
Let $X$ be an indecomposable braided rack. Then $X$ is faithful.
\end{lemma}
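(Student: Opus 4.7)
The plan is to argue by contradiction: suppose $\varphi_x = \varphi_y$ for some distinct $x,y \in X$. The quandle axiom together with $\varphi_x = \varphi_y$ immediately gives $x \trid y = y \trid y = y$ and symmetrically $y \trid x = x$, so the pair $(x,y)$ is consistent with the braided hypothesis of Definition~\ref{def:braided}.

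Next I would apply the braided condition to the pair $(x, z)$ for an arbitrary $z \in X$. This produces a dichotomy: either (a) $x \trid z = z$, or (b) $x \trid (z \trid x) = z$, the latter equivalent to $z \trid x = \varphi_x^{-1}(z)$. Because $\varphi_x = \varphi_y$, alternative (a) holds for $(x,z)$ if and only if it holds for $(y,z)$, so both pairs lie in the same branch. In branch (b), the analogous rearrangement for $(y,z)$ yields $z \trid y = \varphi_y^{-1}(z) = \varphi_x^{-1}(z)$. Thus $\varphi_z(x) = z \trid x = z \trid y = \varphi_z(y)$, and the bijectivity of $\varphi_z$ forces $x = y$, a contradiction. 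Therefore branch (a) must hold for every $z \in X$, which means $\varphi_x = \id$.

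Finally I would invoke indecomposability of $X$: since $\Inn(X)$ acts transitively on $X$ (see Remark~\ref{rem:inner}), every $w \in X$ can be written as $w = g(x)$ for some $g \in \Inn(X)$; applying~\eqref{eq:conjugation} inductively along a product expression for $g$ yields $\varphi_w = g \varphi_x g^{-1} = \id$. Hence $\Inn(X) = \{\id\}$, which acts transitively on $X$ only if $|X| = 1$. But $x \neq y$ forces $|X| \geq 2$, contradicting indecomposability.

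The delicate step, where the braided axiom (rather than the quandle axiom alone) does the real work, is the dichotomy argument: in branch (b) the element $z \trid x$ is expressed solely in terms of $z$ and $\varphi_x$, so the identity $\varphi_x = \varphi_y$ collapses $z \trid x$ to $z \trid y$ and, through the bijectivity of $\varphi_z$, identifies $x$ with $y$. Everything else is formal, given Lemma~\ref{lem:rack_indecomposable} and the transitivity provided by indecomposability.
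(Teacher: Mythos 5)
Your proof is correct, and it reaches the conclusion by a genuinely (if mildly) different route from the paper's. Both arguments share the same skeleton: split off a degenerate case that indecomposability excludes, and otherwise use the braided identity to collapse $\varphi_x=\varphi_y$ to $x=y$. In the main case the paper fixes one $z$ with $z\trid x\ne x$ and computes $x=z\trid(x\trid z)=z\trid(y\trid z)\in\{y,z\}$, while you rewrite the braided identity as $z\trid x=\varphi_x^{-1}(z)$ and cancel the bijection $\varphi_z$; these are essentially dual manipulations, and your observation that the dichotomy is synchronized between $(x,z)$ and $(y,z)$ because $\varphi_x=\varphi_y$ is exactly the needed point. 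The degenerate cases differ more substantively: the paper's is ``some $x$ satisfies $z\trid x=x$ for all $z$,'' dispatched in one line by Lemma~\ref{lem:rack_indecomposable} with $Y=\{x\}$, whereas yours is ``$\varphi_x=\id$,'' dispatched by conjugating $\varphi_x$ around the orbit via \eqref{eq:conjugation} (together with its inverse form $\varphi_{\varphi_i^{-1}(j)}=\varphi_i^{-1}\varphi_j\varphi_i$) to get $\Inn(X)=\{\id\}$, which cannot act transitively on a set with at least two elements. Your version buys independence from Lemma~\ref{lem:rack_indecomposable} at the cost of a routine induction on a word for $g\in\Inn(X)$; note also that Lemma~\ref{lem:crossed}(1), with the roles of the two elements swapped, converts $\varphi_x=\id$ into $z\trid x=x$ for all $z$ and would drop you directly into the paper's one-line finish. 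Either way, the argument is complete.
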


\begin{proof}
Assume first that there exists $x\in X$ such that $z\trid x=x$ for all $z\in X$.
Since $X$ is indecomposable, Lemma~\ref{lem:rack_indecomposable} with $Y=\{x\}$
implies that $X=\{x\}$. Then $X$ is faithful.

Let now $x,y\in X$ such that $x\trid z=y\trid z$ for all $z\in X$.
By the previous paragraph we may assume that there exists $z\in X$
such that $z\trid x\ne x$. Then $x=z\trid(x\trid z)=z\trid(y\trid
z)\in\{y,z\}$ and hence $x=y$. Thus $X$ is faithful.
\end{proof}

Let $X$ be a finite indecomposable faithful rack
and let $x\in X$. In \cite[Sect.\,2.3]{Marburg}
integers $k_n$
for $n\in\N_{\geq2}$ were defined by 
\begin{align*}
k_n=\#\{y\in X\mid & \underbrace{x\trid(y\trid(x\trid(y\trid\cdots)))}_{n\text{
elements}}=y,\\
&\underbrace{x\trid(y\trid(x\trid(y\trid\cdots)))}_{j\text{
elements}}\not=y \text{ for all $j\in \{1,2,\dots ,n-1\}$}\}.
\end{align*}
In particular, 
\[
k_2=\#\{y\in X\mid x\trid y=y,x\not=y\},\quad k_3=\#\{y\in X\mid x\trid(y\trid x)=y,x\trid y\not=y\}.
\]
Since $X$ is indecomposable, the integers $k_n$ do not depend on the choice of
$x$. 
\begin{remark}
By definition, an indecomposable rack $X$ is braided if and only if
$X$ is faithful and $k_{n}=0$ for all $n>3$.
\end{remark}

\begin{example}
The racks $\mathbb{D}_{3}$, $\mathcal{T},$ $\mathcal{A},$ $\mathcal{B},$
$\mathcal{C}$ are braided, see \cite[Table 2]{Marburg}.
\end{example}

%

\begin{example}
Let $A$ be a finite abelian group and $g\in\mathrm{Aut}(A)$. It is well-known
that the affine rack $\mathrm{Aff}(A,g)$ is faithful if and only if $1-g$ is
injective.  Since $A$ is finite, this is equivalent to $x\trid y\ne y$ for all
$x,y\in X$ with $x\ne y$. Therefore $\mathrm{Aff}(A,g)$ is braided if and only
if $1-g+g^{2}=0$. In particular, the affine racks
$\mathrm{Aff}(\mathbb{F}_{5},2)$ and $\mathrm{Aff}(\mathbb{F}_{5},3)$ are not
braided, but $\mathrm{Aff}(\mathbb{F}_{7},3)$ and
$\mathrm{Aff}(\mathbb{F}_{7},5)$ are braided. If an affine rack
$\mathrm{Aff}(\mathbb{F}_{q},\alpha)$ is braided, then $\alpha$ has order
$2,\,3$ or $6$. If $\mathrm{ord}(\alpha)=2$, then $q$ is a power of $3$. If
$\mathrm{ord}(\alpha)=3$, then $q$ is a power of $2$. 
\end{example}

\begin{prop}
\label{pro:degree}
Let $X$ be a braided indecomposable rack. Then $X$ has degree $1$, $2$, $3$,
$4$ or $6$.
\end{prop}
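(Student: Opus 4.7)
The plan is to analyze the cycle structure of the permutation $\varphi_x$ for a fixed $x\in X$, since by definition the degree equals $\mathrm{ord}(\varphi_x)$. Let $y\in X$ lie in a $\varphi_x$-cycle of length $m\geq 2$, and write $y_i:=\varphi_x^i(y)$, with indices read modulo $m$. (Cycles of length $1$ contribute trivially to the order.)

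First I would derive the basic identities inside the cycle. Since $x\trid y_i=y_{i+1}\neq y_i$, the braided axiom applied to $(x,y_i)$ forces the alternative $x\trid(y_i\trid x)=y_i$, i.e.\ $y_i\trid x=y_{i-1}$. Lemma~\ref{lem:braided_cycle} applied to $x\trid y_i=y_{i+1}$ also gives $y_i\trid y_{i+1}=x$. Self-distributivity then yields the key recursion
\[
y_i\trid y_{i+2} \;=\; (y_i\trid x)\trid (y_i\trid y_{i+1}) \;=\; y_{i-1}\trid x \;=\; y_{i-2}.
\]

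Next comes a clean case split on whether $y_2=y_{-2}$. If $y_2=y_{-2}$, then $\varphi_x^4(y)=y$, so $m\mid 4$ and $m\in\{2,4\}$. Otherwise $y_2\neq y_{-2}$, and Lemma~\ref{lem:braided_cycle} applied to $y_0\trid y_2=y_{-2}$ gives $y_{-2}\trid y_0=y_2$. But the recursion above at $i=-2$ also yields $y_{-2}\trid y_0=y_{-4}$, so $y_{-4}=y_2$; equivalently $m\mid 6$, forcing $m\in\{3,6\}$. Hence every cycle length of $\varphi_x$ lies in $\{1,2,3,4,6\}$, and therefore $\mathrm{ord}(\varphi_x)$ divides $12$.

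The main obstacle I anticipate is ruling out $\mathrm{ord}(\varphi_x)=12$, which would require $\varphi_x$ to have simultaneously a $4$-cycle and either a $3$-cycle or a $6$-cycle. Assuming such coexistence, the element $w:=y_0\trid y_3$ attached to the $4$-cycle satisfies $\varphi_x(w)=\varphi_x(y_0)\trid\varphi_x(y_3)=y_1\trid y_0=w$ (using $y_4=y_0$), so $w$ is $\varphi_x$-fixed and, by Lemma~\ref{lem:crossed}, $\varphi_w(x)=x$. By Remark~\ref{rem:inner}, $\varphi_w$ must have the same cycle type as $\varphi_x$, so it also carries a $3$-cycle (or $6$-cycle). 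Analyzing how $w$ interacts with the elements of that secondary cycle---each pair either mutually fixes or triggers a $3$-cycle triple via Lemma~\ref{lem:braided_cycle}---and exploiting the $\varphi_x$-equivariance of all these relations should yield enough incompatible constraints to contradict the matching of cycle types forced by Remark~\ref{rem:inner}, leaving $\mathrm{ord}(\varphi_x)\in\{1,2,3,4,6\}$ as the only possibilities.
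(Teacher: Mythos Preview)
Your derivation that each $\varphi_x$-cycle has length in $\{1,2,3,4,6\}$ is correct and cleaner than the paper's. The paper fixes $y$ with $x\trid y\neq y$, assumes the cycle length $n>4$, and runs a longer ad~hoc chain of rack identities to force $n=6$; your single recursion $y_i\trid y_{i+2}=y_{i-2}$ together with one application of Lemma~\ref{lem:braided_cycle} yields the dichotomy $m\mid 4$ or $m\mid 6$ in two lines. Both arguments arrive at exactly the same point: cycle lengths lie in $\{1,2,3,4,6\}$, so $\mathrm{ord}(\varphi_x)\mid 12$.

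You then correctly observe that excluding degree~$12$ is a separate step---one the paper's own proof does not carry out explicitly either. Your construction of $w=y_0\trid y_3$ from a $4$-cycle is sound (indeed $y_1\trid y_0=y_0\trid(y_1\trid y_3)=y_0\trid y_3=w$ via the recursion, and one further gets $\varphi_w(y_i)=y_{i-1}$), but the closing paragraph is not a proof: ``should yield enough incompatible constraints'' names a strategy without executing it, and Remark~\ref{rem:inner} alone does not obviously produce a contradiction. One line that does work in the commuting subcase: if $y_0\trid a_0=a_0$ for some $a_0$ in a $3$-cycle of $\varphi_x$, then iterating $\varphi_x$ and using $\gcd(3,4)=1$ shows every $y_i$ and $w$ commute with every $a_j$, so $S=\{x,w,y_0,\dots,y_3,a_0,a_1,a_2\}$ is a closed indecomposable subrack in which $\varphi_x|_S$ has cycle type $1^2\,3\,4$ while $\varphi_{a_0}|_S$ has cycle type $1^6\,3$, contradicting Remark~\ref{rem:inner}. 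The non-commuting subcase $y_0\trid a_0\neq a_0$ still needs an argument, so as written your proof has a genuine gap here---albeit one shared with the paper.
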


\begin{proof}
Let $x,y\in X$ such that $x\trid y\ne y$. Assume that $x\trid^n y=y$ with
$n>4$ minimal. We will prove that $n=6$. We have 
\[
(x\trid y)\trid(x\trid^{2}y)=x\trid(y\trid(x\trid y))=x\trid x=x.
\]
By applying $\varphi_{y}$ we obtain that 
\[
x\trid(y\trid(x\trid^{2}y))=(y\trid(x\trid y))\trid(y\trid(x\trid^{2}y))=y\trid x=x\trid^{n-1}y.
\]
Then
$y\trid(x\trid^{2}y)=x\trid^{n-2}y$. By applying $\varphi_{x\trid^{2}y}$ to the
equation $x\trid(x\trid^{3}y)=x\trid^{4}y$ we obtain that 
$(x\trid^{2}y)\trid(x\trid^{4}y)=y$,
since
\begin{align*}
(x\trid^{2}y)\trid(x\trid(x\trid^{3}y))
& =((x\trid^{2}y)\trid x)\trid((x\trid^{2}y)\trid(x\trid^{3}y))\\
& =(x\trid y)\trid(x\trid^{2}(y\trid(x\trid y)))=(x\trid y)\trid x=y.
\end{align*}
Since $x\trid^{4}y\ne y$, we conclude that $(x\trid^{2}y)\trid(x\trid^{4}y)\ne
x\trid^{4}y$.  Then
\[
((x\trid^{2}y)\trid(x\trid^{4}y))\trid(x\trid^{2}y)=x\trid^{4}y,
\]
because $X$
is braided. Therefore $x\trid^{4}y=y\trid(x\trid^{2}y)=x\trid^{n-2}y$ and hence
the claim holds.
\end{proof}

\begin{prop}
There exist infinitely many finite braided indecomposable racks of degree $6$ which
are generated by two elements.
\end{prop}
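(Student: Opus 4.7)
The plan is to exhibit an infinite family of affine racks of the form $\mathrm{Aff}(\F_p,\alpha)$, one for each prime $p\equiv 1\pmod 6$. For such a prime, $\F_p$ contains a primitive sixth root of unity $\alpha$, which automatically satisfies $1-\alpha+\alpha^{2}=0$. By the preceding example, the rack $X_p:=\mathrm{Aff}(\F_p,\alpha)$ is then braided; since $1-\alpha\ne0$ in the field $\F_p$, it is also faithful and therefore indecomposable. Its degree equals $\mathrm{ord}(\alpha)=6$, and $|X_p|=p$, so different primes give pairwise non-isomorphic racks. Dirichlet's theorem on primes in arithmetic progressions then supplies infinitely many such $p$.

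The remaining, and main, step is to verify that $X_p$ is generated as a rack by two elements. I would take $R\subseteq X_p$ to be the subrack generated by $\{0,1\}$ and show that $R=\F_p$. A direct computation from $\varphi_r(y)=(1-\alpha)r+\alpha y$ gives
\[
\varphi_r\varphi_s^{-1}(y)=y+(1-\alpha)(r-s)
\]
for all $r,s,y\in \F_p$. Specializing to $r=1$, $s=0$, this composition is translation by $1-\alpha$, a nonzero element of the prime field $\F_p$, and hence $1-\alpha$ generates $(\F_p,+)$. Because $R$ is finite and a subrack, each $\varphi_r|_R$ with $r\in R$ is a bijection of $R$; the subgroup of $\mathrm{Inn}(X_p)$ generated by these permutations therefore preserves $R$. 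The orbit of $0\in R$ under this subgroup is thus contained in $R$, and by the translation argument this orbit is all of $\F_p$. Consequently $R=\F_p$, so $\{0,1\}$ generates $X_p$.

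The bulk of the work is routine once the affine description is in hand: braidedness, degree, and indecomposability follow directly from the earlier example, and infinitude is just Dirichlet. The only real calculation is the one-line formula for $\varphi_r\varphi_s^{-1}$, which is the crux because it converts the rack-theoretic question of two-generation into the transparent question of which additive subgroup of $\F_p$ is generated by the unit $1-\alpha$.
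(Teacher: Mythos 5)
Your proof is correct, and it takes a somewhat different route from the paper's. The paper works with the same family of affine racks $\mathrm{Aff}(\mathbb{F}_q,\alpha)$ with $1-\alpha+\alpha^2=0$, but it uses \emph{every} prime $p>3$: when $x^2-x+1$ has a root $\alpha$ in $\mathbb{F}_p$ it takes $\mathrm{Aff}(\mathbb{F}_p,\alpha)$ and cites \cite[Prop.\,4.2]{CLA} for two-generation, and otherwise it passes to the quadratic extension $\mathbb{F}_{p^2}$ and proves two-generation there by the explicit computation in Equation~\eqref{eq:affinerack}. You instead restrict to primes $p\equiv 1\pmod 6$ --- exactly the primes for which $\alpha$ already lies in $\mathbb{F}_p$ --- and invoke Dirichlet's theorem for infinitude, which lets you skip the quadratic-extension case entirely. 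Your two-generation argument is also self-contained: the identity $\varphi_r\varphi_s^{-1}(y)=y+(1-\alpha)(r-s)$ shows that $\varphi_1\varphi_0^{-1}$ is translation by the nonzero element $1-\alpha$ of the prime field, whose additive orbit of $0$ is all of $\mathbb{F}_p$; and you correctly note that a finite subrack is stable under the inverse permutations $\varphi_r^{-1}$, so this orbit is contained in the subrack generated by $\{0,1\}$. The trade-off: the paper's version needs no analytic number theory (every $p>3$ contributes an example, of size $p$ or $p^2$) but relies on an external reference and a messier computation; your version is more elementary and entirely self-contained modulo the (standard) appeal to Dirichlet.
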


\begin{proof}
The affine racks $X=\mathrm{Aff}(\mathbb{F}_{q},\alpha)$ are braided
if and only if $1-\alpha+\alpha^{2}=0$. Take any prime number $p>3$.
If there exists $\alpha\in\mathbb{F}_{p}$ such that $1-\alpha+\alpha^{2}=0$,
then $X=\mathrm{Aff}(\mathbb{F}_{q},\alpha)$ is braided. Otherwise,
take the quadratic extension of $\mathbb{F}_{p}$ by $\alpha$, where
$1-\alpha+\alpha^{2}=0$. These racks are indecomposable, since $\alpha\ne1$.
Moreover, $1-\alpha+\alpha^{2}=0$ implies that $\alpha^{6}=1$. Since
$p>3$, $\alpha^{2}\ne1$ and $\alpha^{3}\ne1$. We claim that these
affine racks are always generated by two elements. If there exists
$\alpha\in\mathbb{F}_{p}$ such that $1-\alpha+\alpha^{2}=0$, the
claim follows from \cite[Prop.\,4.2]{CLA}. Otherwise take the
quadratic extension of $\mathbb{F}_{p}$ by $\alpha$. Then 
\begin{equation}
(u+\alpha v)\trid(x+\alpha y)=(u+v-y)+\alpha(x+y-u)
\label{eq:affinerack}
\end{equation}
for all $u,v,x,y\in\mathbb{F}_{p}$. In particular, $u\trid^{3}0=2u$
for all $u\in\mathbb{F}_{p}$ and hence $\mathbb{F}_{p}$ is included
in $S$, the subrack generated by $0$ and $1$. Since $0\trid1=\alpha$
and $(\alpha v)\trid^{3}0=\alpha(2v)$ for all $v\in\mathbb{F}_{p}$,
we conclude similarly that $\alpha\mathbb{F}_{p}$ is also included in $S$. Therefore
the claim follows from Equation \eqref{eq:affinerack} by taking $(u,v)=(0,k)$
for $k\in\mathbb{F}_{p}$ and $(x,y)=(l,0)$ for $l\in\mathbb{F}_{p}$.
\end{proof}

\subsection{Hurwitz actions}

For any $n\in \ndN $ let
\begin{equation}
\label{eq:braidgroup}
\begin{aligned}
\BG n=\langle \sigma _1,\dots ,\sigma _{n-1}\rangle /
(&\sigma _i\sigma _j=\sigma _j\sigma _i \text{ if $|i-j|\ge 2$, }\\
 &\sigma _i\sigma _j\sigma _i=\sigma _j\sigma _i\sigma _j \text{ if $|i-j|=1$})
\end{aligned}
\end{equation}
denote the braid group on $n$ strands.
According to \cite{MR975077}, the action of $\BG n$ on the set
$X^{n}=X\times\cdots\times X$ ($n$-times), where $X$ is a conjugacy class of a group,
was studied implicitly in \cite{MR1510692}. 

Let $X$ be a rack and let $n\in\ndN $.
There is a unique action of the braid group $\BG n$ on $X^n$ such that
\begin{align}
\sigma _i(x_1,\dots ,x_n)=(x_1,\dots ,x_{i-1},x_i\trid x_{i+1},x_i,x_{i+2},\dots,x_n)
\end{align}
for all $x_1,\dots,x_n\in X$, $i\in \{1,2,\dots,n-1\}$.
This action of $\BG n$ on $X^n$ is called the \emph{Hurwitz action} on $X^n$.
For any $(x_1,x_2,\dots,x_n)\in X^n$
we write $\cO(x_1,x_2,\dots,x_n)$ for its \emph{Hurwitz orbit},
the orbit under the Hurwitz action.
The rack $X$ acts on itself via the map $\trid$.
This extends to a canonical action of
the enveloping group $G_X$ on $X$. More generally,
$G_X$ acts on $X^n$ diagonally:
\begin{align}
  g\trid (x_1,\dots,x_n)=(g\trid x_1,\dots ,g\trid x_n)\quad
  \text{for all $g\in G_X$, $x_1,\dots,x_n\in X$.}
  \label{eq:GXonXn}
\end{align}
The diagonal action of $G_X$ and the action of $\BG n$ on $X^n$ commute.
Two Hurwitz orbits $\cO _1,\cO _2\subseteq X^n$ are called
\textit{conjugate} if there exists $g\in G_X$ such that the map
$X^n\to X^n$, $\tup \mapsto g\trid \tup $,
induces a bijection $\cO _1\to \cO _2$.
Two Hurwitz orbits $\cO _1,\cO _2\subseteq X^n$ are called
\textit{isomorphic} if there exists a bijection $\varphi :\cO _1\to \cO _2$
such that $\varphi (\sigma (\tup))=\sigma(\varphi (\tup))$ for all
$\sigma \in \BG n$, $\tup\in \cO _1$. Clearly, conjugate Hurwitz orbits are
isomorphic.

\begin{remark}
The braided action studied in \cite{Marburg} is the same as the Hurwitz action on $X^2$.
\end{remark}

\begin{remark}
Let $X$ be a rack, $n\in \ndN $ and $x_1,\dots,x_n,y_1,\dots ,y_n\in X$.
By the definition of the enveloping group $G_X$,
if $(y_{1},y_{2},\dots ,y_{n})\in\cO(x_{1},x_{2},\dots ,x_{n})$
then $y_{1}y_{2}\cdots y_{n}=x_{1}x_{2}\cdots x_{n}$ in $G_X$.
\end{remark}

In this work we focus on orbits of the Hurwitz action of $\BG 3$.
For a given rack $X$ and for all $j\in\ndN $ let 
\[
\nro{j}=\#\{\cO(x,y,z)\;|\;x,y,z\in X,\;\#\cO(x,y,z)=j\}.
\]
It should always be clear from the context which rack $X$ is. 

In Proposition~\ref{pro:sizes} below
we determine the Hurwitz orbits $\cO \subseteq
X^3$ of a braided rack $X$ up to isomorphism. The non-trivial orbits are
illustrated in Figures~\ref{fig:H3}--\ref{fig:H24} in Appendix B.
In these figures, circles stay for triples in $\cO $, black
arrows indicate the action of $\sigma _1$ and dotted arrows indicate the
action of $\sigma _2$, see Figure~\ref{fig:Horbitnotation}.
\begin{figure}[h]
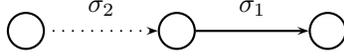

  \begin{center}
  \begin{minipage}{5cm}
  \begin{psmatrix}[mnode=circle,rowsep=.2cm]
  \phantom{x} &\phantom{x} &\phantom{x}
  \end{psmatrix}
  \ncline[linestyle=dotted]{->}{1,1}{1,2}\Aput{$\sigma_2$}
  \ncline{->}{1,2}{1,3}\Aput{$\sigma_1$}
  \end{minipage}
  \caption{The notation for Hurwitz orbits}
  \label{fig:Horbitnotation}
  \end{center}
\end{figure}

For the proof of Proposition~\ref{pro:sizes}
the following theorem going back to Coxeter is useful.

\begin{theorem}
\label{thm:Coxeter} Let $n,p\in \ndN $.
The group $\BG n/(\sigma_{1}^{p})$
is finite if and only if $\frac{1}{n}+\frac{1}{p}>\frac{1}{2}$. In
particular,
\[
\BG 3/\langle\sigma_{1}^{p}\rangle\simeq\begin{cases}
\SG 3 & \text{if }p=2,\\
\mathrm{SL}(2,3) & \text{if }p=3,\\
\mathrm{SL(2,3)\rtimes\mathbb{Z}_{4}} & \text{if }p=4,\\
\mathrm{SL}(2,5)\times\mathbb{Z}_{5} & \text{if }p=5.\end{cases}
\]
\end{theorem}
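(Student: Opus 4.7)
The plan is to recognise $\BG n/\langle\sigma_1^p\rangle$ as a Shephard group and then apply Coxeter's classification. The first step is to observe that, because the standard generators of $\BG n$ are pairwise conjugate (indeed $\sigma_{i+1}=(\sigma_i\sigma_{i+1})\sigma_i(\sigma_i\sigma_{i+1})^{-1}$ for every $i$), the normal closure of $\sigma_1^p$ contains $\sigma_i^p$ for all $i$. Hence the quotient admits the symmetric presentation
\[
\langle \sigma_1,\dots,\sigma_{n-1} \mid \sigma_i^p=1,\; \sigma_i\sigma_j=\sigma_j\sigma_i\ (|i-j|\geq2),\; \sigma_i\sigma_j\sigma_i=\sigma_j\sigma_i\sigma_j\ (|i-j|=1) \rangle,
\]
i.e.\ it is the Shephard group attached to the linear Coxeter diagram of type $A_{n-1}$ with all generator orders equal to $p$.

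For the finiteness criterion, the plan is to invoke Coxeter's classification of finite Shephard (regular complex polytope) groups: such a linear diagram is of spherical type precisely when $(n-2)(p-2)<4$, a condition readily rearranged to $\tfrac{1}{n}+\tfrac{1}{p}>\tfrac{1}{2}$. In particular, for $n=3$ the admissible values are $p\in\{2,3,4,5\}$, for $n\in\{4,5\}$ they are $p\in\{2,3\}$, and for $n\geq 6$ only $p=2$.

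For the explicit isomorphisms at $n=3$, the strategy is to exhibit a faithful two-dimensional complex representation of each quotient and match orders against the upper bound supplied by Coxeter's classification. The case $p=2$ is immediate since the presentation reduces to the standard Coxeter presentation of $\SG 3$. For $p\in\{3,4,5\}$ one sends $\sigma_1,\sigma_2$ to a pair of order-$p$ complex pseudo-reflections in $\mathrm{GL}(2,\C)$ (built from a primitive $(2p)$-th root of unity) that satisfy the length-$3$ braid relation. The image is then a finite complex reflection group whose central quotient is the rotation group of the regular polyhedron $\{3,p\}$; pulling back through the centre recovers the corresponding binary polyhedral group, namely $\mathrm{SL}(2,3)$, the binary octahedral group, and $\mathrm{SL}(2,5)$ for $p=3,4,5$ respectively. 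The remaining cyclic factor is detected via the abelianisation of the quotient, which equals $\ndZ/p$.

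The main obstacle is to pin down the exact extension structure for $p=4$ and $p=5$, which amounts to analysing how the image of the central element $(\sigma_1\sigma_2)^3\in Z(\BG 3)$ interacts with the binary polyhedral subgroup. For $p=5$ the group $\mathrm{SL}(2,5)$ is perfect, so any preimage of a generator of the abelianisation $\ndZ/5$ must centralise $\mathrm{SL}(2,5)$, forcing a direct product $\mathrm{SL}(2,5)\times\ndZ_5$. For $p=4$, on the other hand, $\mathrm{SL}(2,3)$ has abelianisation $\ndZ/3$; in order for the total quotient to have abelianisation $\ndZ/4$ the extra cyclic factor $\ndZ_4$ is forced to act on $\mathrm{SL}(2,3)$ through its order-$2$ outer automorphism, yielding the semidirect product $\mathrm{SL}(2,3)\rtimes\ndZ_4$ of order $96$. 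Matching this with the Shephard bound (or alternatively by a direct Todd--Coxeter coset enumeration) completes the identification.
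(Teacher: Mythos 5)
Your proposal is correct in substance and, for the finiteness criterion, ultimately rests on the same external input as the paper: Coxeter's theorem on factor groups of the braid group, which the paper invokes by bare citation of \cite{Coxeter}. Your reformulation via Shephard groups with linear diagram and all node orders equal to $p$, together with the equivalence of $(n-2)(p-2)<4$ with $\frac{1}{n}+\frac{1}{p}>\frac{1}{2}$, is a faithful reading of that result, so this half is essentially the paper's proof. Where you genuinely diverge is in identifying the four quotients for $n=3$: the paper settles this by citing \cite{MR1731872} and a \textsf{GAP} computation, whereas you construct two-dimensional pseudo-reflection representations and then pin down the extension by comparing abelianisations. This is more informative, but as written it is loose at $p=4$: you first invoke the binary octahedral group (order $48$), while the derived subgroup of the order-$96$ quotient is in fact $\mathrm{SL}(2,3)$ (order $24$), and your abelianisation argument only shows that \emph{if} the group is a semidirect product $\mathrm{SL}(2,3)\rtimes\ndZ_{4}$ then the $\ndZ_4$ must act through the outer involution --- the existence of a normal $\mathrm{SL}(2,3)$ admitting a complement of order $4$ is not forced by the abelianisation alone and still has to be exhibited (for instance inside the reflection representation). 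Your fallback to Todd--Coxeter coset enumeration closes this gap and is in effect exactly what the paper does with \textsf{GAP}; the net effect is that your argument supplies the conceptual picture the paper omits, at the cost of a case analysis the paper outsources to the computer.
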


\begin{proof}
See \cite{Coxeter} for the first claim. For the second claim see
\cite{MR1731872}. The group $\BG n/(\sigma _1)$ can also be identified
with help of \textsf{GAP} \cite{GAP}.
\end{proof}

\begin{prop}
\label{pro:sizes}
Let $d\in \ndN $ and $X$ a braided rack of size $d$. 
Then the possible sizes for a Hurwitz orbit $\cO \subseteq X^3$
are $1$, $3$, $6$, $8$, $9$, $12$, $16$, and $24$.
Two such Hurwitz orbits are isomorphic
if and only if they have the same size.
If $X$ is indecomposable, then
\begin{align*}
\nro{1}&=d,&\nro{3}&=dk_{2},&\nro{6}&=\frac{dt}{6},&\nro{9}&=\frac{d(k_{2}(k_{2}-1)-t)}{3},\\
&&\nro{8}&=\frac{dk_{3}}{2},&\nro{12}&=\frac{dm}{12},&\nro{16}&=\frac{d}{4}(k_{2}k_{3}-k_{2}^{2}+k_{2}+t),
\end{align*}
where
\begin{align}
  \label{eq:mt}
  m=\;&\#\{x\in X\;|\;1\trid x\ne x,\,1\trid^{3}x=x\},\\
  t=\;&\#\{(1,x,y)\;|\;1\trid x=x,\,1\trid y=y,\, x\trid y=y,\, x\ne1,\, y\ne1,\, x\ne y\}
\end{align}
and $1$ is a fixed element of $X$.
\end{prop}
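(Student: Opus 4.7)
The plan is to classify the $\BG 3$-orbits on $X^3$ by the combinatorial type of the triple, then compute orbit sizes and orbit counts by direct case analysis. By Definition~\ref{def:braided} each unordered pair $\{a,b\}\subseteq X$ with $a\ne b$ is either \emph{commuting} ($a\trid b=b$, and hence $b\trid a=a$ by Lemma~\ref{lem:crossed}) or \emph{rotating} ($a\trid b=c\ne b$, in which case Lemma~\ref{lem:braided_cycle} forces the cyclic identities $a\trid b=c$, $b\trid c=a$, $c\trid a=b$ on the triangle $\{a,b,c\}$). I would assign to each triple $(x,y,z)$ a combinatorial type recording the partition of its coordinates by equality, the commuting/rotating status of every pair of distinct coordinates, and, when more than one rotating pair appears, whether they lie in a common triangle. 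The first step is to verify that this type is $\BG 3$-invariant, which yields the isomorphism assertion of the proposition at no extra cost.

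The second step is a case-by-case computation: apply $\sigma_1$ and $\sigma_2$ to a representative of each type and enumerate the orbit. Two observations simplify each calculation: $\sigma_1$ has order $2$ on a commuting pair and order $3$ on a rotating pair, and by Proposition~\ref{pro:degree} the permutations $\varphi_x$ decompose into cycles of length at most $3$ in the relevant cases. One finds the following correspondence: $(x,x,x)$ gives the fixed orbit of size $1$; $(x,x,y)$ with $x\trid y=y$ gives $\{(x,x,y),(x,y,x),(y,x,x)\}$ of size $3$; $(x,x,y)$ with $x\trid y\ne y$ gives a size-$8$ orbit supported on the triangle $\{x,y,x\trid y\}$, consistent with $\sigma_1^3$ acting trivially as in Theorem~\ref{thm:Coxeter}; three distinct pairwise commuting entries give a size-$6$ orbit on which $\BG 3$ factors through $\SG 3$; three distinct entries all lying in one rotating triangle give size $12$; two commuting pairs together with a third (necessarily rotating) pair give size $9$; one commuting and two rotating pairs give size $16$; and the generic configuration gives a size-$24$ orbit. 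The list exhausts all possibilities, as Proposition~\ref{pro:degree} rules out any higher rotation order.

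For the counting, assume $X$ is indecomposable and fix $1\in X$. The diagonal $\Inn(X)$-action is transitive on $X$ and commutes with $\BG 3$, reducing each count to an enumeration of orbit representatives containing the fixed element $1$. This immediately gives $\nro{1}=d$, $\nro{3}=dk_2$ and $\nro{8}=dk_3/2$, the factor $2$ in the last coming from the fact that each rotating triangle through $1$ supplies two elements $y$ satisfying $1\trid(y\trid 1)=y$. The parameters $m$ and $t$ from~\eqref{eq:mt} are then precisely the refinements needed for the remaining sizes: $m$ is the number of triangle-partners of $1$, giving $\nro{12}=dm/12$, while $t$ counts ordered pairs $(x,y)$ of distinct elements pairwise commuting with each other and with $1$, giving $\nro{6}=dt/6$. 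Finally $\nro{9}$ and $\nro{16}$ are obtained by subtracting the already-counted contributions from the totals $dk_2(k_2-1)$ and $dk_2k_3$ of ordered pairs $(x,y)$ having respectively two commuting partners of $1$, or one commuting and one rotating partner. The main obstacle is this last subtraction step: verifying that $t$ and $m$ enter with exactly the coefficients displayed in the proposition requires careful tracking of how $\BG 3$ permutes the commuting and rotating pairs within a triple of three distinct entries, in particular to ensure that no configuration is assigned to the wrong orbit-size class.
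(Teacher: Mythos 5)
Your overall strategy (case analysis on the configuration of the triple, explicit enumeration of each orbit from a representative, then counting marker triples and dividing by their multiplicity per orbit) is the same as the paper's, but the proposal has two genuine gaps. First, the ``combinatorial type'' you propose in step one is \emph{not} $\BG 3$-invariant, so the plan to read off the isomorphism assertion from it fails. The equality partition is not preserved: the $8$-orbit of $(a,a,c)$ with $a\trid c\ne c$ contains both $(a,a,c)$ and $(a,c,a\trid c)$, which have different coincidence patterns. Even the number of commuting pairs is not preserved: the $16$-orbit of $(a,b,c)$ with $a\trid b=b$ and $\{a,c\}$, $\{b,c\}$ non-commuting contains $(a,c,b\trid c)$, in which no two entries commute. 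The paper avoids this by using an orbit-level dichotomy (whether \emph{some} triple of the orbit contains two distinct commuting entries, Cases A and B), together with the observation that in Case A the finite group $\BG 3/(\sigma_1^3)$ of order $24$ acts, which bounds the orbit and allows a complete listing of its elements; the isomorphism statement is then read off from these explicit listings, not from a pointwise invariant.

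Second, your type does not separate the $12$-orbits from the $24$-orbits. A triple $(a,a\trid c,c)$ with all entries distinct and no commuting pairs, whose entries all lie in the single triangle $\{a,c,a\trid c\}$, generates an orbit of size $24$ unless additionally $a\trid^3c=c$ (the paper's condition \eqref{eq:A=W1}); this extra condition is exactly what the parameter $m$ records, namely $m=\#\{x\mid 1\trid x\ne x,\ 1\trid^3x=x\}$, not ``the number of triangle-partners of $1$'' (that is $k_3$). Relatedly, the assertion that Proposition~\ref{pro:degree} forces $\varphi_x$ to have cycles of length at most $3$ is false: braided racks may have degree $4$ or $6$ (e.g.\ $\mathcal{B}$, $\mathrm{Aff}(7,3)$), and it is precisely the elements moved in cycles of length $6$ rather than $3$ that produce $24$-orbits instead of $12$-orbits. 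Without isolating the condition $1\trid^3x=x$ your case list misassigns these orbits, and the formulas $\nro{12}=dm/12$ and (by the subtraction you describe) $\nro{16}$, $\nro{9}$ would not come out correctly. The remaining counts ($\nro 1$, $\nro 3$, $\nro 6$, $\nro 8$) and the identification of the possible sizes are essentially right, though the stated reason for the factor $2$ in $\nro 8=dk_3/2$ should be that each $8$-orbit contains exactly two triples of the form $(x,x,z)$ with $x\trid z\ne z$ (and these two triples have different first entries), so one counts all $dk_3$ such triples over all of $X$ and divides by $2$.
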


\begin{remark} \label{rem:3divm}
  Let $X$ and $m$ be as in the Proposition. Then $3|m$ since $1\in X$ acts on
  $\{x\in X\,|\,1\trid x\not=x,1\trid ^3x=x\}$ and all orbits of this action
  have size $3$ by assumption.
\end{remark}


\begin{proof}
Let $\cO \subseteq X^3$ be a Hurwitz orbit.
We distinguish two cases and several subcases.

\textit{Case A.}
Assume that $a_1\trid (a_2\trid a_1)=a_2$ for all $(a_1,a_2,a_3)\in \cO $.
Then
$$\sigma _1^3(a_1,a_2,a_3)=(a_1,a_2,a_3) \quad
\text{for all $(a_1,a_2,a_3)\in \cO $.}$$
In particular, $\BG 3/(\sigma _1^3)$ acts on $\cO $ via the Hurwitz action.
The group
$\BG 3/(\sigma _1^3)$ is finite by Theorem~\ref{thm:Coxeter}. Moreover,
the order of $\BG 3/(\sigma _1^3)$ is 24. Thus $\#\cO $ divides 24.
Let $(a,b,c)\in \cO $.
The elements of $\cO $ (counted possibly several times) are
\begin{align*}
A =&\; (a, b, c) &
B =&\; (a\trid b, a\trid c, a) \\
C =&\; (a\trid b, a, c) &
D =&\; ((a\trid b)\trid c, a\trid(b\trid c), a\trid b) \\
E =&\; (b, a\trid b, c) &
F =&\; (a\trid b, c, a\trid c) \\
G =&\; (b, (a\trid b)\trid c, a\trid b) &
H =&\; ((a\trid b)\trid c, a\trid b, a\trid c) \\
I =&\; (a\trid(b\trid c), b, a\trid b) &
J =&\; (b, c, (a\trid b)\trid c) \\
K =&\; (c, (a\trid b)\trid c, a\trid c) &
L =&\; (a\trid(b\trid c), a\trid b, a) \\
M =&\; (c, b\trid c, (a\trid b)\trid c) &
N =&\; (a\trid(b\trid c), a, b) \\
O =&\; (b\trid c, b, (a\trid b)\trid c) &
P =&\; (c, a\trid c, b\trid c) \\
Q =&\; (b\trid c, a\trid(b\trid c), b) &
R =&\; (a, c, b\trid c) \\
S =&\; (a, b\trid c, b) &
T =&\; (b\trid c, (a\trid b)\trid c, a\trid(b\trid c)) \\
U =&\; (a\trid c, a, b\trid c) &
V =&\; (a\trid c, b\trid c, a\trid(b\trid c)) \\
W =&\; ((a\trid b)\trid c, a\trid c, a\trid(b\trid c)) &
X =&\; (a\trid c, a\trid(b\trid c), a),
\end{align*}
see also Figure~\ref{fig:H24} in Appendix B.


\textit{Case A.1.} There exists $(a,b,c)\in \cO $ with $a=b=c$.
Then $\cO =\{(a,a,a)\}$.
There are $\nro{1}\,=\,d$ such orbits.

\textit{Case A.2.} There is $(a,b,c)\in \cO $ with $\#\{a,b,c\}=2$.
By applying $\sigma _1^{-1}$ and/or $\sigma _2^{-1}$ if needed,
we may assume that $a=b$.
In this case, $\cO $ is the Hurwitz orbit of size $8$
depicted in Figure~\ref{fig:H8} in Appendix B, with
\begin{align*}
A =&\; (c,a\trid c, a\trid c) & B =&\; (a, c, a\trid c), & C =&\; (a,a, c) \\
D =&\; (a\trid c,a,a\trid c) & & & E =&\; (a, a\trid c, a) \\
F =&\; (a\trid c, a\trid c, a\trid^2 c) & G =&\; (a\trid c, a\trid ^2 c, a), & H =&\; (a\trid^2 c, a, a)
\end{align*}
Note that $a\trid^2 c$ neither commutes with $a$ nor with $a\trid c$
and it differs from both.
There are $dk_3$ triples $(a_1,a_1,a_3)\in X^3$ with $a_1\trid
a_3\not=a_3$. Since $C$ and $F$ are the only triples in $\cO $ of this type,
we conclude that $\nro{8}\,=\,\frac{1}{2}\,d\,k_3$.

\textit{Case A.3.} Assume that $\#\{a_1,a_2,a_3\}=3$ for all $(a_1,a_2,a_3)\in \cO $.
Then $a\trid b\notin \{a,b,c\}$ and $b\trid c\notin \{a,b,c\}$.
If the triple $A=(a,b,c)$ differs from all other triples in the above list,
then $\#\cO =24$. Otherwise
\begin{align} \label{eq:A=W}
  a=&\;(a\trid b)\trid c, & b=&\;a\trid c, & c=&\;a\trid (b\trid c),
\end{align}
in which case $A=W$ and then
the graph in Figure~\ref{fig:H24} in Appendix B
collapses to the graph in Figure \ref{fig:H12},
corresponding to an orbit of size $12$. The second and third equations in \eqref{eq:A=W}
imply that $b\trid c=a\trid b$, and hence from \eqref{eq:A=W} one obtains that
$c=a\trid (a\trid b)=a\trid ^3 c$. In turn, it follows that \eqref{eq:A=W} is
equivalent to
\begin{align} \label{eq:A=W1}
  b=&\;a\trid c,& c=&\;a\trid ^3 c.
\end{align}
The triples corresponding to the vertices in Figure \ref{fig:H12} are
\begin{align*}
A=&\;(a,a\trid c,c) & B=&\; (a,c,c\trid a) & & \\
& & C=&\; (a\trid c,a,c\trid a)& D =&\; (a\trid c,c\trid a, c)\\
E=&\; (a,c\trid a,a\trid c) & F=&\; (c,a\trid c,c\trid a) & G=&\; (a\trid c,c,a)\\
& & H=&\; (c,a,a\trid c)& I=&\; (c,c\trid a, a)& \\
& & J=&\; (c\trid a, c, a\trid c)& K=&\; (c\trid a, a\trid c, a)& \\
& & L=&\; (c\trid a, a, c).& &
\end{align*}
The number of 12-orbits is just the number of triples $(a_1,a_1\trid a_3,a_3)$
with $a_1\trid a_3\not=a_3$, $a_1\trid ^3a_3=a_3$
(which is $dm$) divided by the number of occurrences of such triples
in the 12-orbit (which is 12), that is, $\nro{12}\,=\,\frac{1}{12}\,d\,m$.

\textit{Case B.} There is $(a,b,c)\in \cO $ such that
two of $a,b,c$ are different but commuting.
We are left with four subcases:
\begin{enumerate}
\item
Two of $a,b,c$ are equal, the third one commutes with both.
\item
$a,b,c$ are pairwise different and commuting.
\item
$a,b,c$ are pairwise different, there are precisely two commuting pairs among $(a,b)$, $(a,c)$, $(b,c)$.
\item
$a,b,c$ are pairwise different, there is precisely one commuting pair.
\end{enumerate}

\textit{Case B.1.} We have an orbit of size $3$, see
Figure~\ref{fig:H3} in Appendix B.
The number of triples of the form $(a_1,a_1,a_3)$
with $a_1\not=a_3$ and $a_1\trid a_3=a_3$ is $\nro{3}\,=\,d\,k_2$.

\textit{Case B.2.} Here $\cO $ is an orbit of size $6$,
see Figure~\ref{fig:H6} in Appendix B.
The braid group acts on the triples in $\cO $ just as the permutation group
$\SG 3$ does.
All $6$ triples of $\cO $ are of this type
and there are $dt$ such triples.
Hence $\nro{6}\,=\,\frac{1}{6}\,d\,t$.

\textit{Case B.3.} By applying $\sigma _1$ and/or $\sigma _2$ if needed,
we may assume that
$a\trid b=b$, $a\trid c=c$. Then $a\trid (b\trid c)=b\trid c$
and $b\trid c\notin \{a,b,c\}$.
Then $\#\cO =9$, see Figure \ref{fig:H9} in Appendix B:
\begin{align*}
A=&\; (b,c,a) & B=&\; (b\trid c,b,a) & C=&\; (c,b\trid c,a) \\
D=&\; (b,a,c) & E=&\; (b\trid c,a,b) & F=&\; (c,a,b\trid c) \\
G=&\; (a,b,c) & H=&\; (a,b\trid c,b) & I=&\; (a,c,b\trid c).
\end{align*}
The total number of triples $(a_1,a_2,a_3)\in X^3$ with
$$
 a_1\trid a_2=a_2,\quad a_1\trid a_3=a_3,\quad
 a_1\not=a_2,\quad a_1\not=a_3,\quad a_2\not=a_3
$$
is $dk_2(k_2\,-\,1)$.
{}From this we subtract the number of triples in which $a_2$
and $a_3$ commute (there are $dt$ such triples)
and divide by the number of occurrences of such triples
in the 9-orbit (which is 3).
Hence $\nro{9}\,=\,\frac{1}{3}\,d\,(k_2(k_2\,-\,1)\,-\,t)$.

\textit{Case B.4.} As argued in Case B.3, we may assume that $a\trid b=b$.
Then
$a\trid (b\trid c)\not=b\trid c$ and
$(a\trid c)\trid (b\trid c) =b\trid c$.
Therefore, the orbit $\cO $
has at most size $16$, with the following triples:
\begin{align*}
A=&\; (a, c, b\trid c) &
B=&\; (a\trid c, a, b\trid c) \\
C=&\; (a\trid c, a\trid(b\trid c), a) &
D=&\; (a, b\trid c, b) \\
E=&\; (c, a\trid c, b\trid c) &
F=&\; (a\trid c, b\trid c, a\trid(b\trid c)) \\
G=&\; (b, a\trid c, a) &
H=&\; (a, b, c) \\
I=&\; (a\trid(b\trid c), b, a) &
J=&\; (b, a, c) \\
K=&\; (c, b\trid c, a\trid c) &
L=&\; (b\trid c, a\trid c, a\trid(b\trid c)) \\
M=&\; (a\trid(b\trid c), a, b) &
N=&\; (b, c, a\trid c) \\
O=&\; (b\trid c, b, a\trid c) &
P=&\; (b\trid c, a\trid(b\trid c), b)
\end{align*}
(see also Figure \ref{fig:H16} in Appendix B).
Further, $\#\{a,b,c,a\trid c,b\trid c\}=5$ and $a\trid (b\trid c)\notin
\{a,b,a\trid c,b\trid c\}$. Looking at the first and last
components of the above triples it follows that $\#\cO=16$.
In particular, $\cO $ did not appear in Cases B1--B3.

The total number of triples $(a_1,a_2,a_3)$ of pairwise different
elements, such that only $a_1$ and $a_2$ commute, can be calculated
as follows: the total number of triples $(a_1,a_2,a_3)$ with pairwise
different $a_1$, $a_2$, $a_3$, such that $a_1$ and $a_2$ commute, but
$a_1$ and $a_3$ do not commute, is $dk_2k_3$.
Among these we have the $d(k_2(k_2\,-\,1)-t)$ triples
with $a_2\trid a_3=a_3$ (see also Case B.3).
With this, the total number of triples, such
that only $a_1$ and $a_2$ commute, is
$$
d\,k_2\,k_3\,-\,d\,(k_2\,(k_2\,-\,1)\,-\,t)
\;\;=\;\; d\big(k_2\,k_3\,-\,k_2^2\,+\,k_2\,+\,t\big).
$$
Finally, there are four triples in $\cO (a,b,c)$
of the form $(a_1,a_2,a_3)$ with
$a_1\trid a_2=a_2$: $F,H,J$ and $L$. Hence
$$
\nro{16} \;\;=\;\;\frac{1}{4}\,d\,\big(k_2\,k_3\,-\,k_2^2\,+\,k_2\,+\,t\big).
$$
This completes the proof of the proposition.
\end{proof}


\subsection{The immunity of a Hurwitz orbit}

Let $X$ be a rack.
In the next section we will need a combinatorial invariant of a Hurwitz orbit
$\cO \subseteq X^3$
which is defined as follows.

\begin{definition}
  Let $\cO \subseteq X^3$ be a Hurwitz orbit. A \textit{quarantine of} $\cO $ is
  a non-empty subset $Q\subseteq \cO $ such that if two of
  $$(x,y,z),\quad (x,y\trid z,y),\quad (x\trid (y\trid z),x,y)$$
  are in $Q$, then the third one is in $Q$.
  Graphically this means the following (see Figures~\ref{fig:Horbitnotation},
  \ref{fig:quarantine}):
  if two vertices along a path consisting of a dotted
  arrow followed by a black arrow are in $Q$, then the third vertex is in $Q$.
  \begin{figure}[h]
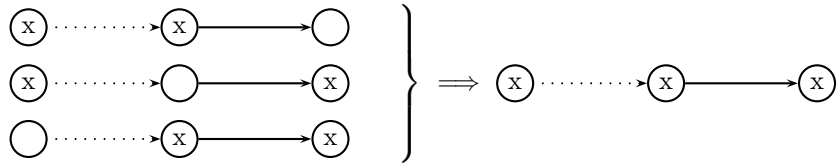

	  \begin{center}
  \begin{minipage}{5cm}
  \begin{psmatrix}[mnode=circle,rowsep=.2cm]
  x&x&\phantom{x} \\
  x&\phantom{x} &x\\
  \phantom{x} &x&x
  \end{psmatrix}
  \ncline[linestyle=dotted]{->}{1,1}{1,2}
  \ncline{->}{1,2}{1,3}
  \ncline[linestyle=dotted]{->}{2,1}{2,2}
  \ncline{->}{2,2}{2,3}
  \ncline[linestyle=dotted]{->}{3,1}{3,2}
  \ncline{->}{3,2}{3,3}
  \end{minipage}
  $\left. \rule[-.8cm]{0pt}{2cm}\right\}$
  $\implies $
  \begin{minipage}{5cm}
  \begin{psmatrix}[mnode=circle]
  x&x&x
  \end{psmatrix}
  \ncline[linestyle=dotted]{->}{1,1}{1,2}
  \ncline{->}{1,2}{1,3}
  \end{minipage}
  \caption{The rule defining a quarantine}
  \label{fig:quarantine}
  \end{center}
  \end{figure}

  A subset $P\subseteq \cO $
  is called a \textit{plague} if the smallest quarantine of $\cO $ containing $P$
  is $\cO $. Let $P$ be a plague of smallest possible size.
  The \textit{immunity of} $\cO $ is the number $\imm{\cO }=\#P/\#\cO \in
  \ndQ \cap (0,1]$.
\end{definition}

\begin{prop} \label{pro:imm}
  Let $X$ be a braided rack and $\cO \subseteq X^3$ a Hurwitz orbit.
  \begin{itemize}
    \item If $\# \cO =1$ then $\imm{\cO}=1$.
    \item If $\# \cO \in \{3,6,9,12\}$ then $\imm{\cO}=\frac{1}{3}$.
    \item If $\# \cO =8$ then $\imm{\cO}=\frac{3}{8}$.
    \item If $\# \cO =16$ then $\imm{\cO}=\frac{5}{16}$.
    \item If $\# \cO =24$ then $\imm{\cO}=\frac{7}{24}$.
  \end{itemize}
\end{prop}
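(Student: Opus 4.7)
The plan is to proceed case-by-case on the size of $\cO$, using the explicit lists of triples given in the proof of Proposition~\ref{pro:sizes} and the diagrams in Figures~\ref{fig:H3}--\ref{fig:H24} of Appendix B. For each orbit the first task is to enumerate the \emph{quarantine triangles}---the $\#\cO$ triples $(A,\sigma_2 A,\sigma_1\sigma_2 A)$ as $A$ ranges over $\cO$---and, crucially, to identify the \emph{degenerate} triangles in which two of the three vertices coincide. A degenerate triangle of the form $(B,C,C)$ with $B\neq C$ forces $B\in Q$ as soon as $C\in Q$, since $C$ then fills two of the three positions; analogously for $(B,B,C)$. It is precisely these degenerate triangles that make very small plagues possible in the orbits of sizes $3$, $8$, $9$, and $12$.

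For each size the proof then splits into an upper bound and a lower bound. For the upper bound I would exhibit a specific candidate plague $P$ and verify, by successive application of the quarantine rule to the explicit triangles, that the closure of $P$ equals $\cO$. In the size-$3$ orbit $\{(a,a,c),(a,c,a),(c,a,a)\}$, for example, the triangle based at $(a,c,a)$ is the degenerate triple $((a,c,a),(a,a,c),(a,a,c))$, so $(a,a,c)\in Q$ forces $(a,c,a)\in Q$, and then the triangle $((a,a,c),(a,c,a),(c,a,a))$ forces $(c,a,a)\in Q$; hence $\{(a,a,c)\}$ is a plague and $\imm{\cO}\leq 1/3$. Analogous but successively longer inference chains, read off from the explicit coordinates of Proposition~\ref{pro:sizes} and from the figures, establish the upper bounds for all other sizes.

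For the lower bound I would show, orbit by orbit, that no subset of smaller size has quarantine closure equal to $\cO$. The $\BG 3$-action on $\cO$ permutes candidate plagues, which cuts the problem down to a small number of orbit representatives; one then checks by direct inspection that each such representative leaves at least one vertex unreachable at every stage of the closure process. The main obstacle will be the orbits of sizes $16$ and especially $24$, coming from Cases B.4 and A.3, which consist entirely of triples of pairwise distinct entries and therefore have no degenerate triangles. Here every closure step adds exactly one new vertex from two already present, so one must rule out all subsets of size $4$ (respectively $6$) as plagues. My plan is to combine symmetry reduction via $\BG 3$ with direct inspection of Figure~\ref{fig:H24}; a cleaner alternative, which I would try first, is to find a weighting of the vertices that is non-increasing under closure steps and whose total value on $\cO$ exceeds what any small subset can achieve.
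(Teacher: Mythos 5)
Your overall strategy coincides with the paper's: both reduce to one orbit per size via Proposition~\ref{pro:sizes}, encode the quarantine rule by the triangles $\{A,\sigma_2A,\sigma_1\sigma_2A\}$, and prove the upper bounds by exhibiting an explicit plague and running the closure. Your size-$3$ computation is correct and is exactly the kind of verification the paper performs (its plagues are $\{A\}$, $\{A,B\}$, $\{A,D,H\}$, $\{A,B,C\}$, $\{A,B,D,E\}$, $\{A,B,C,E,H\}$ and $\{A,B,C,D,E,K,N\}$ in the labelling of the appendix figures). The lower bounds are where your plan diverges, and there is one genuine error: the Hurwitz action of $\BG 3$ on $\cO$ does \emph{not} permute the quarantine triangles, because $g\{A,\sigma_2A,\sigma_1\sigma_2A\}=\{gA,g\sigma_2A,g\sigma_1\sigma_2A\}$ is in general not the triangle at $gA$ ($\BG 3$ is not abelian; already in the $6$-orbit, where $\BG 3$ acts through $\SG 3$ by permuting the coordinates of $(a,b,c)$, the $\sigma_1$-image of the triangle at $(a,b,c)$ is not a triangle). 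The symmetries you are entitled to are the automorphisms of $\cO$ as a $\BG 3$-set, i.e.\ the bijections commuting with $\sigma_1$ and $\sigma_2$; this group is smaller than you assume, and the residual enumeration for the $16$- and $24$-orbits is no longer a ``direct inspection''.

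The idea your proposal is missing is the paper's lower-bound device: exhibit \emph{proper quarantines} $Q\subsetneq\cO$, since any plague must meet $\cO\setminus Q$ for each such $Q$. For the $8$-orbit, $\{A,B,D,E,F,G\}$ and $\{B,C,D,E,G,H\}$ are quarantines, so a plague meets both $\{C,H\}$ and $\{A,F\}$, and one then checks that none of the four resulting $2$-sets is a plague; the $9$-orbit is handled the same way, and this is essentially what your hoped-for monotone weighting would have to encode. (A side remark: your claim that the small plagues in sizes $3,8,9,12$ are made possible ``precisely'' by degenerate triangles is an overstatement --- the $6$- and $9$-orbits have no degenerate triangles yet still have immunity $\frac13$.) Note finally that the paper itself writes out the lower bound only for sizes up to $9$ and for sizes $12$, $16$, $24$ verifies only the inequalities $\imm{\cO}\le\cdots$, which is all that Proposition~\ref{pro:Oineq} and the main theorem require; so on the hardest cases your plan is no worse off than the published argument, but as written it would not close them either.
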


\begin{proof}
  By Proposition~\ref{pro:sizes}, any Hurwitz orbit $\cO \subseteq X^3$ is up to isomorphism
  uniquely determined by its size, which is one of $1$, $3$, $6$, $8$, $9$, $12$, $16$ and $24$.
  The case $\#\cO =1$ is trivial.
  We use a labeling of the triples of the orbit
  as on Figures~\ref{fig:H3}--\ref{fig:H24} in the Appendix.
  If $\#\cO=3$, then $P=\{A\}$ is a plague.
  If $\#\cO=6$, then $P=\{A,B\}$ is a plague and no subset of $\cO $
  of cardinality $1$ is a plague.

  Assume that $\#\cO=8$. The set $\{A,D,H\}$ is a plague of $\cO $. On the
  other hand,
  since $\{A,B,D,E,F,G\}$ and $\{B,C,D,E,G,H\}$ are quarantines,
  for any plague $P$ of $\cO $ we have
  $P\cap \{C,H\}\not=\emptyset $ and $P\cap \{A,F\}\not=\emptyset $.
  Since none of $\{A,C\}$, $\{A,H\}$, $\{C,F\}$, $\{F,H\}$ is a plague,
  we obtain that $\imm{\cO }=3/8$.

  Assume that $\#\cO=9$. The set $\{A,B,C\}$ is a plague of $\cO $. On the
  other hand, $B$ is an element of the quarantines $\{B,C,E,G,H\}$,
  $\{A,B,D,G,I\}$ and $\{B,F\}$ and hence there is no plague $P$ with $B\in
  P$, $\#P=2$. Similarly, $H$ is an element of the quarantines
  $\{B,C,E,G,H\}$, $\{A,C,F,H,I\}$, $\{D,H\}$, and hence there is no
  plague $P$ with $H\in P$, $\#P=2$. Finally, $\{B,C,E,G,H\}$, $\{A,E\}$,
  $\{D,E\}$, $\{E,F\}$, $\{E,I\}$ are quarantines containing $E$, and hence
  there is no plague $P$ with $E\in P$, $\#P=2$. By symmetry, there is no
  plague $P$ of $\cO $ with $\#P=2$. We conclude that $\imm{\cO}=1/3$.

  The proof for the other orbits is similar but more tedious.
  However, the crucial inequality $\imm{\cO }\le \dots $ is easily checked:
  If $\#\cO =12$, then
  $\{A,B,D,E\}$ is a plague.
  If $\#\cO =16$, then $\{A,B,C,E,H\}$ is a plague.
  If $\#\cO =24$, then $\{A,B,C,D,E,K,N\}$ is a plague.
\end{proof}

\section{Nichols algebras over groups}
\label{sec:NA}

For the general theory of Nichols algebras we refer to \cite{MR1913436}.
Details on the relationship between racks and Nichols algebras can be found
in \cite[\S 6]{MR1994219}.

Let $\fie $ be a field.
Yetter-Drinfeld modules over a group $G$ are $\fie G$-modules with a left
coaction $\lcoa :V\to \fie G\otimes V$ satisfying the Yetter-Drinfeld
condition.
Any Yetter-Drinfeld module $V$ over $G$ decomposes as $V=\oplus _{g\in G}V_g$,
where $V_g=\{v\in V\,|\,\lcoa (v)=g\otimes v\}$ for all $g\in G$. The set
\begin{align}\label{eq:supp}
  \supp V=\{g\in G\,|\,V_g\not=0\}
\end{align}
is called the \textit{support of} $V$. By the Yetter-Drinfeld condition,
$\supp V$ is invariant under the adjoint action of $G$.

For any group $G$, any $g\in G$ and any representation $(\rho ,W)$ of
the centralizer $C_G(g)$ of $g$ the $\fie G$-module
\begin{align}
  M(g,\rho )=\fie G \otimes _{\fie C_G(g)}W
\end{align}
is a Yetter-Drinfeld module,
where $W$ is regarded as a $\fie C_G(g)$-module via $\rho \in \End _\fie (W)$ and $\delta
(h\otimes w)=hgh^{-1}\otimes (h\otimes w)$ for all $h\in G$, $w\in W$.
Let $g^G$ be the conjugacy class of $g$ in $G$.
Then $M(g,\rho )=\oplus _{x\in g^G}M(g,\rho )_x$, where
$M(g,\rho )_{hgh^{-1}}=\fie h\otimes W$ for all $h\in G$.

The category $\ydG $ of Yetter-Drinfeld modules over a group $G$ is a braided
monoidal category. Unless otherwise specified,
all tensor products are taken over the fixed field $\fie $.
The braiding is denoted by $c$.
If the braiding appears together with the
tensor product, we also use leg notation: for all $k\in \ndN $,
$i\in \{1,2,\dots ,k-1\}$ and all Yetter-Drinfeld modules $V\sbs1,\dots, V\sbs k$ let
\begin{align*}
 c_{i,i+1}\,&:V\sbs1\otimes \cdots \otimes V\sbs k \to V\sbs 1\otimes \cdots
 \otimes V\sbs{i-1}\otimes
 V\sbs{i+1}\otimes V\sbs i\otimes V\sbs {i+2}\otimes \cdots \otimes V\sbs k,\\
 c_{i,i+1}&=\;\id ^{i-1}\otimes c\otimes \id ^{k-i-1}.
\end{align*}

Nichols algebras are $\ndN _0$-graded braided Hopf algebras.
For any Yetter-Drinfeld module
$V$ over a group $G$ the Nichols algebra of $V$ is denoted by $\toba (V)$.
Then
$$ \toba (V)=\oplus _{n\in \ndN _0}\toba _n(V) $$
is its decomposition into the direct sum of the homogeneous components, where
$\toba _0(V)=\fie $, $\toba _1(V)=V$, and $\toba _n(V)$ is a Yetter-Drinfeld
submodule of $\toba (V)$ for all $n\in \ndN _0$.
The \textit{Hilbert series of} $\toba (V)$ is the formal
power series $\Hilb _{\toba (V)}(t)\in \ndZ [\![t]\!]$ defined by
\begin{align}
  \Hilb _{\toba (V)}(t)=\sum _{i=0}^\infty (\dim \toba _n(V))t^n.
  \label{eq:Hilbert}
\end{align}
We use the notation
\begin{align}
  (n)_{t^r}=& \sum_{i=0}^{n-1}t^{ri},& (\infty )_{t^r}=\sum_{i=0}^\infty t^{ri}
\end{align}
for all $r,n\in \ndN _{\ge 1}$ in connection with the Hilbert series of
Nichols algebras.

\subsection{Nichols algebras with many cubic relations}

The main result of our paper is the following theorem.
In (3) the map $1+c_{12}+c_{12}c_{23}\in \End _\fie (V^{\otimes 3})$
will appear which is defined using leg notation.

\begin{theorem}
  \label{thm:main}
  Let $G$ be a non-abelian group, $g\in G$ and $\rho $ a finite-dimensional
  absolutely irreducible representation of $C_G(g)$. Assume that
  the conjugacy class $X$ of $g$ is a finite braided rack and generates the group $G$.
  Let $V=M(g,\rho )$.
  The following are equivalent.
  \begin{enumerate}
    \item
      The Hilbert series $\Hilb _{\toba (V)}(t)$ of $\toba (V)$
      is a product of factors from
      $$\{(n)_t,(n)_{t^2}\,|\,n\in \ndN _{\ge 2}\cup \{\infty \}\}.$$
    \item $\dim \toba _3(V)\le \dim V\big(\dim \toba _2(V)
      -\frac{1}{3}( (\dim V)^2-1)\big)$.
    \item
      $\dim \ker (1+c_{12}+c_{12}c_{23})\ge
      \frac{1}{3}\dim V( (\dim V)^2-1)$.
    \item The Yetter-Drinfeld module $V$ appears in Tables~\ref{tab:nichols}
      and \ref{tab:candc}.
  \end{enumerate}
\end{theorem}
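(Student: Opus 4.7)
The plan is to close the cycle $(4) \Rightarrow (1) \Rightarrow (2) \Leftrightarrow (3) \Rightarrow (4)$. The implication $(4) \Rightarrow (1)$ is essentially bookkeeping: for each Yetter-Drinfeld module $V$ listed in Tables~\ref{tab:nichols} and \ref{tab:candc}, the Hilbert series of $\toba(V)$ is recorded in the literature (or is a direct calculation for the new example produced in this paper) and factorizes explicitly as a product of factors of the form $(n)_t$ or $(n)_{t^2}$.

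For $(1) \Rightarrow (2)$ I would expand $\Hilb_{\toba(V)}(t) = \prod_{i}(a_i)_t \prod_j(b_j)_{t^2}$ with $a_i, b_j \geq 2$ and read off the coefficients of $t$, $t^2$ and $t^3$. Setting $d = \dim V$, $p_k = \#\{i \mid a_i \geq k\}$ and $q = \#\{j\}$, one gets $d = p_2$, $\dim \toba_2(V) = \binom{d}{2} + p_3 + q$ and $\dim \toba_3(V) = \binom{d}{3} + (d-1)p_3 + dq + p_4$; a short rearrangement reduces the inequality in $(2)$ to the trivial bound $p_4 \leq p_3$. For $(2) \Leftrightarrow (3)$ the key identity is
\[
\dim V \cdot \dim \toba_2(V) - \dim \toba_3(V) = \dim \ker(1+c_{12}+c_{12}c_{23}).
\]
This follows from factoring the quantum symmetrizer as $S_3 = (1+c_{23})(1+c_{12}+c_{12}c_{23})$, the fact $\dim \ker(1+c_{23}) = d \cdot \dim \ker(1+c) = d^3 - d \dim \toba_2(V)$, and the inclusion $\ker(1+c_{23}) \subseteq \mathrm{Im}\,(1+c_{12}+c_{12}c_{23})$, which is immediate from $(1+c_{12}+c_{12}c_{23})(v) = v$ whenever $c_{23}(v) = -v$.

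The substantive step is $(3) \Rightarrow (4)$. The Yetter-Drinfeld grading refines to a decomposition of $V^{\otimes 3}$ indexed by Hurwitz orbits $\cO \subseteq X^3$, and $A := 1 + c_{12} + c_{12}c_{23}$ respects this decomposition, so it suffices to estimate $\dim \ker A|_{V_\cO}$ for each orbit $\cO$. The combinatorics of $A$ is captured by the immunity invariant of Proposition~\ref{pro:imm}: relations propagate throughout the orbit along the graphs illustrated in Figures~\ref{fig:H3}--\ref{fig:H24}, so a kernel element is determined by its projections to a plague, yielding an upper bound on $\dim\ker A|_{V_\cO}$ in terms of $(1 - \imm{\cO})\dim V_\cO$. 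Summing these contributions using the counts $\nro{j}$ of Proposition~\ref{pro:sizes}, and refining at orbits of sizes $1$ and $8$ via Propositions~\ref{pro:orbit1} and \ref{pro:orbit8} (where the representation $\rho $ of the centralizer enters nontrivially), one obtains the combinatorial inequality of Proposition~\ref{pro:gen_ineq}. This inequality restricts $X$ to finitely many braided racks, classified by degree in Sections~\ref{sec:deg2}, \ref{sec:deg4} and \ref{sec:deg36}; for each surviving rack one enumerates the absolutely irreducible representations of $C_G(g)$ and directly verifies the criterion, producing exactly the list of $V$ in the tables. The hardest point is the tightness required at orbits of sizes $1$ and $8$: a naive immunity bound is too loose to eliminate spurious candidates, and Propositions~\ref{pro:orbit1} and \ref{pro:orbit8} supply the sharpened analysis---exploiting the representation theory of $C_G(g)$---needed to make the counting argument work.
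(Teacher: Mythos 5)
Your overall architecture matches the paper's: the substantive implication is (3)$\Rightarrow$(4), and your outline of it --- the Hurwitz-orbit decomposition of $V^{\otimes 3}$, the per-orbit immunity bound, the sharpened analysis at orbits of sizes $1$ and $8$ where $\rho$ enters, the resulting bound $k_3\le 6$ (resp.\ $k_3\le 3$ when $e\ge 2$) of Proposition~\ref{pro:gen_ineq}, the classification of braided racks by degree, and the case-by-case verification --- is exactly the paper's proof. Two points where you diverge. First, your treatment of (2)$\Leftrightarrow$(3) is genuinely stronger than the paper's: the paper proves only (2)$\Rightarrow$(3), from the inequality $\dim\ker S_3\le\dim\ker(1+c_{23})+\dim\ker(1+c_{12}+c_{12}c_{23})$, and explicitly remarks that equality holds ``but we don't need this fact here'', closing the equivalence only by going the long way around the cycle. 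Your observation that $\ker(1+c_{23})\subseteq\mathrm{Im}\,(1+c_{12}+c_{12}c_{23})$ (since $c_{23}v=-v$ forces $(1+c_{12}+c_{12}c_{23})v=v$) upgrades the inequality to the identity $\dim V\cdot\dim\toba_2(V)-\dim\toba_3(V)=\dim\ker(1+c_{12}+c_{12}c_{23})$, which is correct and gives both directions at once; likewise your direct expansion of the Hilbert series for (1)$\Rightarrow$(2), reducing the inequality to $p_4\le p_3$, is correct and replaces the paper's factor-substitution trick. Second, a slip in the orbit estimate: the immunity bound goes the other way from what you wrote. If a kernel element is determined by its projections to a plague $P$, then $\dim\ker(1+c_{12}+c_{12}c_{23})|_{V_{\cO}^{\otimes 3}}\le \#P\,(\dim V_x)^3=\imm{\cO}\,\dim V_{\cO}^{\otimes 3}$; the quantity $(1-\imm{\cO})\dim V_{\cO}^{\otimes 3}$ is the resulting \emph{lower} bound on the rank, not the upper bound on the kernel. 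Taken literally your version (allowing $2/3$ instead of $1/3$ of the orbit dimension for orbits of sizes $3$, $6$, $9$, $12$) is far too weak to force $k_3\le 6$; but since the correct bound is immediate from the determination-by-plague statement you yourself give, this reads as a transcription error rather than a gap in the argument.
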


\begin{remark}
  In the setting of Theorem~\ref{thm:main},
  the rack $X$ is indecomposable
  since $G$ is generated by $X$ and $X$ is a conjugacy class of $G$.
\end{remark}

\begin{definition}
 \label{def:manycubic}
 Let $V$ be a Yetter-Drinfeld module over a group algebra.
 We say that the Nichols algebra $\toba(V)$ \emph{has many cubic relations} 
 if the inequality in Theorem~\ref{thm:main}(3) is satisfied.
\end{definition}

The difficult part of Theorem~\ref{thm:main} is the implication (3)$\Rightarrow $(4).
Its proof will occupy the remaining part of the paper. The other implications are elementary.

\begin{proof}
  (1)$\Rightarrow $(2).
  Consider $\Hilb _{\toba (V)}(t)$ in $\ndZ [\![t]\!]/(t^4)$. Then (1) implies that
  $\Hilb _{\toba (V)}(t)$ is a product of polynomials $1+t$, $1+t+t^2$, $1+t+t^2+t^3$
  and $1+t^2$. By replacing the factors $1+t+t^2$ by $1+t+t^2+t^3$ we may raise
  the coefficient of $t^3$ in $\Hilb _{\toba (V)}(t)$
  without changing the coefficients of $1$, $t$, and $t^2$.
  Now replace the factors $1+t+t^2+t^3$ by $(1+t)(1+t^2)$.
  Thus there exist $n,a,b\in \ndN _0$ such that
  \begin{align}
    \Hilb _{\toba (V)}(t)=(1+t)^a(1+t^2)^b-nt^3\text{ $+$ terms of degree $\ge 4$.}
  \end{align}
  Since $\toba _1(V)=V$, we conclude that $a=\dim V$.
  The coefficient of $t^2$ in $\Hilb _{\toba(V)}(t)$ is $a(a-1)/2+b$ and
  the coefficient of $t^3$ is
  $$\frac{a(a-1)(a-2)}{6}+ab-n=a\left(\frac{a(a-1)}{2}+b\right)-\frac{a(a^2-1)}{3}-n.$$
  This implies the claim.

  (2)$\Rightarrow $(3). Let $S_3=(1+c_{23})(1+c_{12}+c_{12}c_{23})\in \End _\fie (V^{\otimes 3})$
  denote the third quantum symmetrizer. By definition of $\toba _3(V)$ and by (2),
  \begin{align*}
    \dim \ker S_3=&\;(\dim V)^3-\dim \toba _3(V)\\
    \ge &\dim V\left(\dim \ker (1+c)+\frac{1}{3}( (\dim V)^2-1)\right).
  \end{align*}
  On the other hand, by linear algebra we obtain that
\begin{align*}
  & (\dim V)\dim \ker (1+c)+\dim \ker (1+c_{12}+c_{12}c_{23})\\
  & \quad =\dim \ker (1+c_{23}) +\dim \ker (1+c_{12}+c_{12}c_{23})\\
  & \quad \ge \dim \ker (1+c_{23})(1+c_{12}+c_{12}c_{23})\\
  &\quad =\dim \ker S_3.
\end{align*}
  The combination of these two inequalities yields the claim.

  (4)$\Rightarrow $(1) The Hilbert series of $\toba (V)$ can be found in
  Table~\ref{tab:nichols}. For the old examples, $\Hilb _{\toba (V)}(t)$
  was already known.
  For the new examples we calculate $\Hilb _{\toba (V)}(t)$
  in Propositions~\ref{prop:newD3} and \ref{prop:newT}.
\end{proof}

\begin{remark}
  The inequality
  $$\dim \ker S_3 \le (\dim V)\dim \ker (1+c)
  +\dim \ker (1+c_{12}+c_{12}c_{23})$$
  used in the proof of (2)$\Rightarrow $(3) is in fact an
  equality for arbitrary braidings of finite-dimensional vector spaces,
  but we don't need this fact here.
\end{remark}

Let $G$ be a group, $V$ a Yetter-Drinfeld module over $\fie G$
and $X=\supp V$.
For any Hurwitz orbit $\cO \subseteq X^3$ let
$$ V_{\cO }^{\otimes 3}=\oplus _{(x,y,z)\in \cO }V_x\otimes V_y\otimes V_z.$$
Since $V=\oplus _{g\in X}V_g$, we conclude that
$V^{\otimes 3}=\oplus _{\cO }V_{\cO }^{\otimes 3}$,
where $\cO $ is running over all Hurwitz orbits. Further,
each of $V_{\cO}^{\otimes 3}$ is invariant under $1+c_{12}+c_{12}c_{23}$. Thus
\begin{align} \label{eq:dimker}
 \dimker =\sum _{\cO }\rdimker .
\end{align}
The next proposition is one of our main tools to find a good estimate of
the dimension of $\ker (1+c_{12}+c_{12}x_{23})$.

\begin{prop} \label{pro:Oineq}
  Let $G$ be a group, $V$ a non-zero
  finite-dimensional Yetter-Drinfeld module over $\fie G$,
  $X=\supp V$ and $\cO \subseteq X^3$ a Hurwitz orbit.
  Then
  $$\rdimker \le \imm{\cO }\,\dim V_{\cO }^{\otimes 3}.$$
\end{prop}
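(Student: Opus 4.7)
The plan is to construct an injection of $\rker $ into $\bigoplus_{\tau\in P}V_\tau $ for a minimum-size plague $P\subseteq \cO $, where I abbreviate $V_{(x,y,z)}:=V_x\otimes V_y\otimes V_z$, and then conclude by a dimension count. Set $M:=1+c_{12}+c_{12}c_{23}$. The braiding of a Yetter-Drinfeld module satisfies $c(v\otimes w)=(g\cdot w)\otimes v$ for $v\in V_g$, hence $c_{12}$ restricts to an isomorphism $V_\tau \to V_{\sigma _1\tau }$ and $c_{12}c_{23}$ to an isomorphism $V_\tau \to V_{\sigma _1\sigma _2\tau }$ for every $\tau \in \cO $; in particular $V_\cO ^{\otimes 3}$ is $M$-stable.

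First I would expand $M(v)=0$ componentwise. For $v=\sum _{\tau \in \cO }v_\tau $ with $v_\tau \in V_\tau $, projecting onto $V_\tau $ yields
\[
v_\tau +c_{12}(v_{\sigma _1^{-1}\tau })+c_{12}c_{23}(v_{\sigma _2^{-1}\sigma _1^{-1}\tau })=0.
\]
Writing $\tau '=\sigma _2^{-1}\sigma _1^{-1}\tau =(x,y,z)$ one finds $\sigma _1^{-1}\tau =(x,y\trid z,y)$ and $\tau =(x\trid (y\trid z),x,y)$, so these three indices are precisely the three triples in the quarantine rule. Since $c_{12}$ and $c_{12}c_{23}$ are graded isomorphisms, vanishing of any two summands above forces vanishing of the third. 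Consequently, for every $v\in \ker M$ the set $Q:=\{\tau \in \cO :v_\tau =0\}$ satisfies the closure rule defining a quarantine of $\cO $.

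Next I would fix a plague $P\subseteq \cO $ of minimum cardinality, so that $|P|/|\cO |=\imm{\cO }$. If $v\in \ker M$ satisfies $v_\tau =0$ for every $\tau \in P$, then $P\subseteq Q$; since $P$ is a plague and $Q$ is a quarantine, $Q=\cO $, forcing $v=0$. Therefore the projection $\ker M\to \bigoplus _{\tau \in P}V_\tau $ is injective, giving $\rdimker \le \sum _{\tau \in P}\dim V_\tau $.

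To finish I would observe that $\dim V_\tau $ is constant on $\cO $: both Hurwitz generators $\sigma _1$ and $\sigma _2$ replace one component of a triple by a $G$-conjugate of another component, so the multiset of $G$-conjugacy classes of the components of a triple is Hurwitz-invariant; since the Yetter-Drinfeld structure gives $\dim V_g=\dim V_{hgh^{-1}}$, the product $\dim V_\tau $ depends only on $\cO $. Calling this common value $D$, one obtains
\[
\rdimker \le |P|\,D=\frac{|P|}{|\cO |}\,|\cO |\,D=\imm{\cO }\dim V_\cO ^{\otimes 3}.
\]
The main technical obstacle is identifying the three triples $\tau '$, $\sigma _1^{-1}\tau $, $\tau $ in the componentwise equation with the triples appearing in the quarantine rule; once this alignment is pinned down, everything else reduces to invertibility of the braiding on graded pieces and the defining property of a plague.
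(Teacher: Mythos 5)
Your proof is correct and follows essentially the same route as the paper's: expand an element of the kernel into its $\cO $-homogeneous components, note that the componentwise equations realize exactly the quarantine rule because $c_{12}$ and $c_{12}c_{23}$ are graded isomorphisms, and conclude that the kernel injects into $\bigoplus _{\tau \in P}V_\tau $ for a minimum-size plague $P$. Your closing observation that $\dim V_\tau $ is constant along the orbit is just a slightly more careful justification of the final dimension count than the paper's, which writes $\#P(\dim V_x)^3$ directly.
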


\begin{proof}
  Let $\tau \in \rker $.
  Then for all $(x,y,z)\in \cO $ there exist uniquely determined
  elements $\tau _{(x,y,z)}\in V_x\otimes V_y\otimes V_z$ such that
  $\tau =\sum _{\bar{x}\in \cO } \tau _{\bar{x}}$.
  Since $\tau \in \ker(1+c_{12}+c_{12}c_{23})$, it follows that
  $$\tau _{(x\trid (y\trid z),x,y)}+c_{12}\tau _{(x,y\trid z,y)}
  +c_{12}c_{23}(\tau _{(x,y,z)})=0$$ for all $(x,y,z)\in \cO $.
  If two summands of such an expression vanish, then so does the third since
  $c_{12}$ and $c_{23}$ are bijective.
  Let now $P\subseteq \cO $ be a plague.
  If $\tau _{(x,y,z)}=0$ for all $(x,y,z)\in P$, then $\tau =0$
  by the choice of $P$.
  Hence the rank of $1+c_{12}+c_{12}c_{23}|_{V_{\cO}^{\otimes 3}}$
  is bounded from below by $\dim V_{\cO }^{\otimes 3}-\#P(\dim V_x)^3$,
  where $x\in X$, that is,
\begin{align*}
\rdimker \le &\#P(\dim V_x)^3
=\frac{\#P}{\#\cO }\dim V_{\cO }^{\otimes 3}=\imm{\cO }\,\dim V_{\cO }^{\otimes 3}.
\end{align*}
This proves the claim.
\end{proof}

%

\begin{definition}
\label{def:optimal}
  Let $G$ be a group, $V$ a non-zero
  finite-dimensional Yetter-Drinfeld module over $\fie G$,
  $X=\supp V$ and $\cO \subseteq X^3$ a Hurwitz orbit.
  The pair $(V,\cO)$ is said to be \emph{optimal with respect to}
  $1+c_{12}+c_{12}c_{23}\in\End _\fie (V^{\otimes3}_\cO)$
  if
  $$\rdimker =\imm{\cO}\,\dim V_\cO^{\otimes3}.$$
\end{definition}

\subsection{Hurwitz orbits with one element}

For the study of Nichols algebras over groups
with many cubic relations,
the Hurwitz orbits of size $1$ and $8$ will play a
distinguished role. We start with a lemma to warm up
and with the analysis of the $1$-orbits.

\begin{lemma}
Let $G$ be a group, $V$ a non-zero Yetter-Drinfeld 
module over $\fie G$, and $X=\supp V$.
Let $q\in \fie \setminus \{0\}$,
$x\in X$, and $\cO=\cO(x,x)\subseteq X^2$. Assume that
$e=\dim V_x<\infty $ and that $xv=qv$ for all $v\in V_x$. Then
$\dim \ker (1+c)$ is the following:
\begin{align*}
\displaystyle{\frac12{e(e+1)}}\quad 	 & \text{ if $q=-1$,}\\
\displaystyle{\frac12{e(e-1)}}\quad 	 & \text{ if $q=1$, $\charf\not=2$,}\\  
0\quad					 & \text{ otherwise.}
\end{align*}
\label{le:warmup}
\end{lemma}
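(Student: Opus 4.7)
The plan is to reduce the problem to a direct linear algebra computation on $V_x\otimes V_x$. First I would observe that the orbit $\cO(x,x)\subseteq X^2$ is a singleton: since $X$ is a conjugacy class in $G$ (the support of a Yetter-Drinfeld module), the rack operation is conjugation, so $x\trid x=x$ and hence $\sigma_1(x,x)=(x,x)$. Consequently the restriction of $c$ to $V_\cO^{\otimes 2}=V_x\otimes V_x$ is what we need to analyse.

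Next I would write the braiding $c$ on $V_x\otimes V_x$ explicitly. By definition of the Yetter-Drinfeld braiding, $c(v\otimes w)=(x\cdot w)\otimes v$ for $v,w\in V_x$, and the hypothesis $xv=qv$ gives $c(v\otimes w)=q\,\tau(v\otimes w)$, where $\tau$ denotes the flip on $V_x\otimes V_x$. Hence $1+c=1+q\tau$, and finding its kernel amounts to finding eigenvectors of $\tau$ with eigenvalue $-q^{-1}$.

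The third step is the elementary case analysis on the eigenvalues of $\tau$. Since $\tau^2=\id$, its possible eigenvalues are $\pm 1$, so a nontrivial kernel forces $-q^{-1}\in\{1,-1\}$, i.e.\ $q=\pm 1$. If $q=-1$, then $\ker(1+c)$ is the $+1$-eigenspace of $\tau$, namely $S^2V_x$, of dimension $\tfrac12 e(e+1)$. If $q=1$ and $\charf\neq 2$, then $\ker(1+c)$ is the $-1$-eigenspace of $\tau$, namely $\Lambda^2V_x$, of dimension $\tfrac12 e(e-1)$. In all remaining cases $\tau+q^{-1}$ is invertible on $V_x\otimes V_x$ and so $\ker(1+c)=0$.

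There is essentially no obstacle here; the only subtlety is that in characteristic $2$ the cases $q=1$ and $q=-1$ coincide, so one must check that the formula in the statement is consistent with this coincidence, which it is (the first branch applies and gives $\tfrac12 e(e+1)$). The result for general $q\ne \pm1$ also does not depend on the characteristic because the two distinct eigenvalues $1\pm q$ of $1+q\tau$ on $S^2V_x$ and $\Lambda^2V_x$ are both non-zero.
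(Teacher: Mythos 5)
Your proposal is correct and in substance identical to the paper's argument: both reduce the problem to the observation that $c$ acts on $V_x\otimes V_x$ as $q$ times the flip $\tau$ and then count by elementary linear algebra. The paper packages the computation as explicit $1\times 1$ blocks $W_{ii}$ and $2\times 2$ blocks $W_{ij}\oplus W_{ji}$ with matrix $\left(\begin{smallmatrix}1&q\\ q&1\end{smallmatrix}\right)$, while you phrase it via the $\pm1$-eigenspaces of $\tau$ and the invertibility of $\tau+q^{-1}$ when $q^2\ne 1$; your handling of the characteristic-$2$ coincidence of the two branches is the right check and matches what the paper's block count yields.
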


\begin{proof}
Let $v_1,v_2,\dots,v_e$ be a basis of $V_x$. For all $i,j\in\{1,\dots,e\}$ let 
$W_{ij}=\fie(v_i\otimes v_j)$. Decompose $V_x\otimes V_x$ as
\[
V_x\otimes V_x=(\oplus_i W_{ii})\oplus \oplus_{i<j}(W_{ij}\oplus W_{ji}).
\]
Then
\[ (1+c)|_{W_{ii}}=(1+q)\id _{W_{ii}} \]
for all $i\in \{1,\dots,e\}$, and the matrix of $1+c$ with respect to the
basis $(v_i\otimes v_j,v_j\otimes v_i)$ of $W_{ij}\oplus W_{ji}$ for $i\not=j$
is
\[
\begin{pmatrix}
 1 & q \\ q & 1
\end{pmatrix}.
\]
This matrix has rank $1$ if $q^2=1$ and rank $2$ if $q^2\not=1$.
Now the claim of the lemma follows by counting.
\end{proof}

\begin{prop}
\label{pro:orbit1}
Let $G$ be a group, $V$ a non-zero Yetter-Drinfeld 
module over $\fie G$, and $X=\supp V$.
Let $q\in \fie \setminus \{0\}$,
$x\in X$, and $\cO=\cO(x,x,x)\subseteq X^3$. Assume that
$e=\dim V_x<\infty $ and that $xv=qv$ for all $v\in V_x$. Then
$\rdimker $ is the following:
\begin{align*}
\displaystyle{\frac13{e(e^2+2)}}\quad 	 & \text{ if $\charf=3$,
$q=1$,}\\  
\displaystyle{\frac13{e(e^2-1)}}\quad 	 & \text{ if $q=-1$ or
$\charf\not=3$, $q=1$,}\\  
\displaystyle{\frac16{e(e+1)(e+2)}}\quad &
\text{ if $\charf\ne3$, $1+q+q^2=0$,}\\
\displaystyle{\frac16{e(e-1)(e-2)}}\quad &
\text{ if $\charf\ne2,3$, $1-q+q^2=0$,}\\
0\quad											 & \text{ otherwise.}
\end{align*}
In particular, $\rdimker \leq \frac{1}{3}e(e^2+2)$.
\end{prop}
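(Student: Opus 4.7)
Since $\cO=\{(x,x,x)\}$, we have $V_\cO^{\otimes 3}=V_x^{\otimes 3}$ of dimension $e^3$. The hypothesis $xv=qv$ for $v\in V_x$ makes the braiding $c$ on $V_x\otimes V_x$ equal to $q\tau$, with $\tau$ the flip. Hence $A:=1+c_{12}+c_{12}c_{23}$ acts on $V_x^{\otimes 3}$ as the image of $\tilde A:=1+q s_1+q^2 s_1s_2\in\fie[\SG 3]$ under the natural permutation representation $\pi\colon \SG 3\to\mathrm{GL}(V_x^{\otimes 3})$ on tensor factors. The problem is thereby reduced to computing the rank of a specific element of $\fie[\SG 3]$ on a few standard $\SG 3$-modules.

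Fix a basis of $V_x$. Partition the induced basis of $V_x^{\otimes 3}$ into $\SG 3$-orbits. Three orbit types arise: $e$ orbits $\{v\otimes v\otimes v\}$ of size $1$; $e(e-1)$ orbits of size $3$, each coming from tensors in which exactly two of the three indices coincide; and $\binom{e}{3}$ orbits of size $6$ from tensors with three distinct indices. Since $A$ commutes with $\pi$, it preserves this decomposition, and it suffices to compute the kernel on each orbit type. On a size-$1$ orbit $\tilde A$ acts as the scalar $1+q+q^2$. On a size-$3$ orbit, in the natural basis the matrix of $\tilde A$ is $\bigl(\begin{smallmatrix}1+q&q^2&0\\0&1&q+q^2\\q^2&q&1\end{smallmatrix}\bigr)$, whose determinant factors as $(q-1)^2(q+1)^2(q^2+q+1)$; inspecting ranks shows the kernel is one-dimensional precisely when $q\in\{\pm1\}$ or $q^2+q+1=0$, and trivial otherwise. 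On a size-$6$ orbit $\tilde A$ acts via the regular representation $\fie[\SG 3]$; when $\charf\nmid 6$ this decomposes as $\mathrm{triv}\oplus\mathrm{sgn}\oplus 2\cdot\mathrm{std}$, and the kernel dimension there equals $[1+q+q^2=0]+[1-q+q^2=0]+2\dim\ker(\tilde A|_{\mathrm{std}})$, where $\tilde A|_{\mathrm{std}}$ is the explicit $2\times 2$ matrix with trace $2-q^2$ and determinant $(q^2-1)^2$.

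Summing contributions in each regime of the proposition yields the stated values: for $q=-1$ or ($q=1$, $\charf\ne 3$), only size-$3$ orbits (contributing $e(e-1)$) and size-$6$ orbits (contributing $2\binom{e}{3}$) enter, totalling $\frac{1}{3}e(e^2-1)$; for $1+q+q^2=0$ with $\charf\ne 3$, all three orbit types contribute, summing to $\frac{1}{6}e(e+1)(e+2)$; for $1-q+q^2=0$ with $\charf\ne 2,3$, only the sign component of size-$6$ orbits contributes, giving $\frac{1}{6}e(e-1)(e-2)$; and in the remaining cases the kernel is zero. The main obstacle is the exceptional non-semisimple regime $\charf=3$, $q=1$: here I would compute the $6\times 6$ matrix of $\tilde A$ on the regular representation directly and verify by row reduction that its rank equals $4$, so that each size-$6$ orbit still contributes $2$ to the kernel dimension, while the diagonal orbits now add an extra $e$ (because $1+q+q^2=3=0$). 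Together with the unchanged size-$3$ contribution $e(e-1)$, this produces the value $\frac{1}{3}e(e^2+2)$. The final bound $\dim\ker\le\frac{1}{3}e(e^2+2)$ follows by comparing the five expressions and observing that the $\charf=3$, $q=1$ value is the largest.
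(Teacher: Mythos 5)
Your decomposition is essentially the one in the paper's proof: the induced basis of $V_x^{\otimes 3}$ is split according to the pattern of indices into $e$ singletons, $e(e-1)$ three\-/dimensional blocks and $\binom{e}{3}$ six\-/dimensional blocks, and on each block $1+c_{12}+c_{12}c_{23}$ acts by an explicit matrix (the scalar $1+q+q^2$, the same $3\times 3$ matrix you write down, and a $6\times 6$ matrix whose determinant the paper records as $(q+1)^4(q-1)^4(q^2+q+1)(q^2-q+1)$). Your identification of the six\-/dimensional blocks with the regular representation of $\SG 3$ acted on by $\tilde A=1+qs_1+q^2s_1s_2$ is a clean reformulation, and the arithmetic you carry out in the semisimple regime, as well as the final summations, are correct.

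There is, however, a genuine gap: you treat $\charf=3$, $q=1$ as the only case escaping the decomposition $\mathrm{triv}\oplus\mathrm{sgn}\oplus 2\cdot\mathrm{std}$, but $\charf=2$ escapes it too (you yourself restrict the decomposition to $\charf\nmid 6$), and you never return to it. This is not a cosmetic omission. In characteristic $2$ the trivial and sign characters coincide and $1-q+q^2=1+q+q^2$, so for $q$ with $1+q+q^2=0$, $q\ne 1$, your count $[1+q+q^2=0]+[1-q+q^2=0]+2\dim\ker(\tilde A|_{\mathrm{std}})$ would give $1+1+0=2$, whereas the correct kernel dimension of the $6\times 6$ block is $1$ --- which is what the stated value $\frac16 e(e+1)(e+2)$ forces, and what the paper's rank statement ($\rk A=5$ exactly when $(q^2+q+1)(q^2-q+1)=0$ and $q^2\ne 1$) asserts uniformly in all characteristics. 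So characteristic $2$ requires the same kind of direct rank verification that you propose for $\charf=3$, $q=1$: one must check that the $6\times6$ block has rank $4$ when $q=1=-1$ and rank $5$ when $q$ is a primitive cube root of unity. The paper sidesteps the issue by row\-/reducing the explicit $6\times 6$ matrix symbolically, which is valid in every characteristic; if you replace the representation\-/theoretic count by that computation (or supplement it with the two characteristic\-/$2$ checks), your argument closes.
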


\begin{proof}
Let $v_1,v_2,\dots,v_e$ be a basis of $V_x$. For all $i,j,k\in\{1,\dots,e\}$ let 
$W_{ijk}=\fie(v_i\otimes v_j\otimes v_k)$. Decompose $V_x\otimes V_x\otimes V_x$ as
\[
V_x\otimes V_x\otimes V_x=(\oplus_i W_{iii})\oplus \oplus_{i\ne j}(W_{iij}\oplus
W_{iji}\oplus W_{jii})\oplus(\oplus_{i\ne j\ne k,i\ne k}W_{ijk}).
\]
Then
\begin{align*}
&(1+c_{12}+c_{12}c_{23})(w_1\otimes w_2\otimes w_3)\\
&\qquad
=w_1\otimes w_2\otimes w_3 + q w_2\otimes w_1\otimes w_3+ q^2w_3\otimes w_1\otimes w_2
\end{align*}
for all $w_1\in V_i$, $w_2\in V_j$ and $w_3\in V_k$.
In particular,
if $1+q+q^2=0$, then $\dim \ker (1+c_{12}+c_{12}c_{23})|_{\oplus _iW_{iii}}$ is
$e$, otherwise it is zero.

Assume that $e\geq2$. Let $i,j\in\{1,\dots,e\}$ with $i\ne j$ and let $\lambda_1,\lambda_2,\lambda_3\in\fie$. 
Then
\begin{align*}
(1+c_{12}&+c_{12}c_{23})(\lambda_1 v_i\otimes v_i\otimes v_j+\lambda_2  v_i\otimes v_j\otimes v_i+\lambda_3  v_j\otimes v_i\otimes v_i)\\
&=(\lambda_1+\lambda_1q+\lambda_2q^2)v_i\otimes v_i\otimes v_j\\
&\quad+(\lambda_2+\lambda_3q+\lambda_3q^2)v_i\otimes v_j\otimes v_i+(\lambda_3+\lambda_2q+\lambda_1q^2)v_j\otimes v_i\otimes v_i.
\end{align*}
This expression is zero if and only if
\[
0=(1+q)\lambda_1+q^2\lambda_2
=\lambda_2+(q+q^2)\lambda_3=q^2\lambda _1+q\lambda_2+\lambda_3.
\]
Note that
\[
\det \begin{pmatrix}
1+q & q^2 & 0 \\
0 & 1 & q+q^2\\
q^2 & q & 1
\end{pmatrix}
=(1+q)^2(1-q)^2(1+q+q^2)
\]
and the rank of this matrix is at least 2.
Therefore if $(1+q)(1-q)(1+q+q^2)=0$, then the dimension of
$\ker (1+c_{12}+c_{12}c_{23})$ 
restricted to $\oplus_{i\ne j}(W_{iij}\oplus W_{iji}\oplus W_{jii})$ 
is $e(e-1)$, otherwise it is zero.

Assume that $e\geq3$.
Let $i_1,i_2,i_3\in\{1,\dots,e\}$ be pairwise different elements and 
for all $\sigma\in\SG 3$ let $\lambda_\sigma\in\fie$. 
Similarly to the previous calculation,
\[
\sum_{\sigma\in\SG 3}\lambda_\sigma v_{i_{\sigma(1)}}
\otimes v_{i_{\sigma(2)}}\otimes v_{i_{\sigma(3)}}\in\ker(1+c_{12}+c_{12}c_{23})
\]
if and only if $(\lambda _\sigma )_{\sigma \in \SG 3}\in \ker A$, where
\[ A=
\begin{pmatrix}
1 & 0 & q & q^2 & 0 & 0\\
0 & 1 & 0 & 0 & q & q^2\\
q & q^2 & 1 & 0 & 0 & 0\\
0 & 0 & 0 & 1 & q^2 & q\\
q^2 & q & 0 & 0 & 1 & 0\\
0 & 0 & q^2 & q & 0 & 1
\end{pmatrix}.
\]
We obtain the following facts:
\begin{itemize}
  \item $\det A=(q+1)^4(q-1)^4(q^2+q+1)(q^2-q+1)$.
  \item $\rk A=4$ if and only if $q\in \{-1,1\}$.
  \item $\rk A=5$ if and only if $(q^2+q+1)(q^2-q+1)=0$, $q^2\not=1$.
\end{itemize}
The claim of the proposition follows by summing up
$\dim\ker(1+c_{12}+c_{12}c_{23})$ for different values of $q$.
\end{proof}

\subsection{Hurwitz orbits with eight elements}

The other important Hurwitz orbits for the proof of Theorem~\ref{thm:main}
are the orbits with $8$ elements.

\begin{prop}
\label{pro:orbit8}
Let $G$ be a group, $V$ a non-zero Yetter-Drinfeld 
module over $\fie G$, and $X=\supp V$.
Let $x,y\in X$, $\cO=\cO(x,x,y)\subseteq X^3$,
and $q\in \fie \setminus \{0\}$.
Assume that $x\trid (y\trid x)=y$, $x\not=y$, $e=\dim V_x<\infty $ and $xv=qv$
for all $v\in V_x$. Then $\dim V^{\otimes 3}_{\cO }=8e^3$.
\begin{enumerate}
  \item If $q=-1$ then $\rdimker \le e^2(5e+1)/2$.
  \item If $q\not=-1$ then $\rdimker \le e^2(5e-1)/2$.
\end{enumerate}
\end{prop}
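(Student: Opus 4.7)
The plan is to proceed in the spirit of Proposition \ref{pro:orbit1}: set up the eight kernel equations for the Hurwitz orbit $\cO = \cO(x, x, y)$ explicitly and bound the rank of the resulting linear system. First I would enumerate the eight triples $A, B, \ldots, H$ of $\cO$ as listed in Case A.2 of the proof of Proposition \ref{pro:sizes}. Since the rack operation in $\supp V$ lifts to conjugation in $G$, the identity $y = x \trid (y \trid x)$ shows that $y$ is conjugate to $x$ in $G$, and then so are $x \trid y$ and $x \trid^2 y$. Hence every fiber $V_z$ occurring in $\cO$ has dimension $e$ and $z$ acts on $V_z$ by the scalar $q$; in particular $\dim V_\cO^{\otimes 3} = 8 e^3$, and the braiding $c$ on any repeated factor $V_z \otimes V_z$ is $q$ times the flip.

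Next I would apply Proposition \ref{pro:Oineq} with the plague $\{A, D, H\}$ from the proof of Proposition \ref{pro:imm}, which gives that the projection $\tau \mapsto (\tau_A, \tau_D, \tau_H)$ is injective on $\ker(1 + c_{12} + c_{12} c_{23})|_{V_\cO^{\otimes 3}}$, hence the coarse bound $\rdimker \leq 3 e^3$. To shave off the extra term, I would exploit the four \emph{degenerate} triples $A, C, F, H$ at which two consecutive coordinates coincide so that $\sigma_1$ or $\sigma_2$ fixes the triple. At such a triple the kernel relation collapses; for example, at $A = (y, x \trid y, x \trid y)$ one has $\sigma_2(A) = A$, and the relation becomes $\tau_B + c_{12}(1 + c_{23}) \tau_A = 0$, forcing $\tau_B$ into the image of $1 + c$ acting on the last two tensor factors. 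Analogous collapses occur at $C$, $F$, $H$.

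To quantify the reduction, I would fix a basis of $V_x$ and transport it to the other fibers via the $G$-action. In these bases the restriction of $1 + c$ to any $V_z \otimes V_z$ is described by the matrix computed in the proof of Lemma \ref{le:warmup}; its rank equals $e(e - 1)/2$ if $q = -1$ and is at least $e(e + 1)/2$ otherwise. Substituting the four collapsed relations back through the plague parametrization translates each into a linear condition on $(\tau_A, \tau_D, \tau_H) \in V_A \oplus V_D \oplus V_H$, and combining them yields a joint codimension of at least $e \cdot e(e - 1)/2 = e^2(e - 1)/2$ when $q = -1$ and at least $e \cdot e(e + 1)/2 = e^2(e + 1)/2$ when $q \neq -1$. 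Subtracting from $3 e^3$ gives the asserted bounds $e^2(5e + 1)/2$ and $e^2(5e - 1)/2$ respectively.

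The main obstacle is verifying that the four collapsed constraints combine to give precisely this joint codimension, without undercounting or redundancy. As in Proposition \ref{pro:orbit1}, this reduces to an explicit block-matrix rank computation with entries polynomial in $q$; the only case split is whether $q = -1$, mirroring the dichotomy in the statement.
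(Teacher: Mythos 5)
Your overall strategy is the same as the paper's: compute $\dim V_{\cO}^{\otimes 3}=8e^3$, start from the size-$3$ plague bound $3e^3$, and recover the missing $e^2(e\mp1)/2$ from the failure of invertibility of $1+c$ on the doubled fibers, with Lemma~\ref{le:warmup} supplying the count. The final arithmetic is correct. However, the decisive step is asserted rather than proved: you claim that the collapsed relations, pulled back through the plague parametrization, impose codimension at least $e\cdot\mathrm{rank}(1+c)|_{V_x\otimes V_x}$ on $(\tau_A,\tau_D,\tau_H)$, and you yourself flag this as "the main obstacle" without resolving it. Two specific problems. First, of the four degenerate triples only the two with \emph{repeated first two} coordinates ($C=(x,x,y)$ and $F=(z,z,x\trid z)$) produce relations of the form $(1+c_{12})\tau_{\bullet}=\dots$; the relations at $A$ and $H$ (repeated last two coordinates) have the shape $\tau_{\bullet}=-c_{12}(1+c_{23})\tau_{A}$ and simply determine a neighbouring component outright, contributing no codimension by themselves. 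Lumping all four together and worrying about their "joint codimension without redundancy" is therefore beside the point: since you only need a lower bound, a single constraint of full rank suffices, but you have not verified even one. Second, your choice of plague $\{A,D,H\}$ excludes the doubled-fiber triples $C$ and $F$, so $\tau_C$ is a derived quantity, reached from $\tau_D,\tau_H$ through a chain of three relations; showing that $(1+c_{12})\tau_C-c_{12}c_{23}c_{12}(1+c_{23})\tau_A$ still has rank at least $\mathrm{rank}(1+c_{12})|_{V_x\otimes V_x\otimes V_y}$ as a function of $(\tau_A,\tau_D,\tau_H)$ is exactly the block-matrix computation you defer, and it is the whole content of the proposition beyond the trivial bound.

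The paper avoids this difficulty by a better choice of free components. It writes out all eight kernel relations, solves five of them to express $\tau_{zwx},\tau_{yzz},\tau_{xyz},\tau_{xzx},\tau_{wxx}$ in terms of $\tau_{xxy},\tau_{zxz},\tau_{zzw}$ (i.e.\ the components at $C$, $D$, $F$, so the doubled-fiber components are themselves free parameters), and observes that the residual equation \eqref{eq:9} reads $(1+c_{12})(\tau_{xxy})=c_{12}c_{23}c_{12}(1+c_{23})(\tau_{yzz})$ with the right-hand side a function of $\tau_{zxz}$ and $\tau_{zzw}$ only. Hence the fibre of the kernel over each choice of $(\tau_{zxz},\tau_{zzw})$ has dimension at most $\dim\ker(1+c_{12})|_{V_x\otimes V_x\otimes V_y}=e\dim\ker(1+c)|_{V_x\otimes V_x}$, and Lemma~\ref{le:warmup} gives $2e^3+e\cdot e(e+1)/2$ for $q=-1$ and $2e^3+e\cdot e(e-1)/2$ otherwise, which are the stated bounds. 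If you rework your argument with the free components taken at $C$, $D$, $F$ instead of $A$, $D$, $H$, the rank verification you are missing becomes immediate.
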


\begin{proof}
Let $z=x\trid y$ and $w=x\trid z$. Then $w\notin\{x,z\}$, $z\trid x=y$,
$w\trid x=z$, $y\trid z=x$, $z\trid w=x$, and 
\[
\cO=\{(x,x,y),(x,z,x),(w,x,x),(z,w,x),(z,z,w),(z,x,z),(y,z,z),(x,y,z)\}.
\]
Since $x\trid (y\trid x)=y$, it follows that $\dim V_x=\dim V_y$ and $\dim
V^{\otimes 3}_{\cO }=8e^3$.
Any element $\tau\in V^{\otimes3}_{\cO}$ has the form
\[
\tau=\tau_{xxy}+\tau_{xzx}+\tau_{wxx}+\tau_{zwx}+\tau_{zzw}+\tau_{zxz}
+\tau_{yzz}+\tau_{xyz},
\]
where $\tau_{ijk}\in V_i\otimes V_j\otimes V_k$ for all $i,j,k\in X$.
Suppose that $\tau \in  \ker(1+c_{12}+c_{12}c_{23})|_{V_{\cO}^{\otimes3}}$.
Applying $1+c_{12}+c_{12}c_{13}$ to $\tau$ and considering summands of different degrees 
we obtain the following equations:
\begin{align*}
&\tau_{xxy}+c_{12}(\tau_{xxy})+c_{12}c_{23}(\tau_{xyz})=0,&\;&\tau_{xzx}+c_{12}(\tau_{zwx})+c_{12}c_{23}(\tau_{zxz})=0,\\
&\tau_{wxx}+c_{12}(\tau_{xzx})+c_{12}c_{23}(\tau_{xxy})=0,&\;&\tau_{zwx}+c_{12}(\tau_{wxx})+c_{12}c_{23}(\tau_{wxx})=0,\\
&\tau_{zzw}+c_{12}(\tau_{zzw})+c_{12}c_{23}(\tau_{zwx})=0,&\;&\tau_{zxz}+c_{12}(\tau_{xyz})+c_{12}c_{23}(\tau_{xzx})=0,\\
&\tau_{yzz}+c_{12}(\tau_{zxz})+c_{12}c_{23}(\tau_{zzw})=0,&\;&\tau_{xyz}+c_{12}(\tau_{yzz})+c_{12}c_{23}(\tau_{yzz})=0.
\end{align*}
This system of equations is equivalent to 
\begin{align}
\label{eq:1}\tau_{zwx}&=-(c_{12}c_{23})^{-1}(1+c_{12})(\tau_{zzw}),\\
\label{eq:2}\tau_{yzz}&=-c_{12}(\tau_{zxz})-c_{12}c_{23}(\tau_{zzw}),\\
\label{eq:4}\tau_{xyz}&=-c_{12}(\tau_{yzz})-c_{12}c_{23}(\tau_{yzz})\\
&=c_{12}(1+c_{23})c_{12}(\tau_{zxz})+c_{12}(1+c_{23})c_{12}c_{23}(\tau_{zzw}),\notag\\
\label{eq:3}\tau_{xzx}&=-(c_{12}c_{23})^{-1}(\tau_{zxz}+c_{12}(\tau_{xyz}))\\
&=-c_{23}^{-1}\left( ( c_{12}^{-1}+c_{12}^2+c_{12}c_{23}c_{12})(\tau_{zxz})+c_{12}(1+c_{23})c_{12}c_{23}(\tau_{zzw}) \right),\notag\\
\label{eq:5}\tau_{wxx}&=-c_{12}(\tau_{xzx})-c_{12}c_{23}(\tau_{xxy}),\\
\label{eq:6}0&=\tau_{xxy}+c_{12}(\tau_{xxy})+c_{12}c_{23}(\tau_{xyz}),\\
\label{eq:7}0&=\tau_{xzx}+c_{12}(\tau_{zwx})+c_{12}c_{23}(\tau_{zxz}),\\
\label{eq:8}0&=\tau_{zwx}+c_{12}(\tau_{wxx})+c_{12}c_{23}(\tau_{wxx}).
\end{align}
Using Equation \eqref{eq:4}, Equation \eqref{eq:6} is
equivalent to 
\begin{gather}
\label{eq:9}
(1+c_{12})(\tau_{xxy})-c_{12}c_{23}c_{12}(1+c_{23})(\tau_{yzz})=0.
\end{gather}
Since $xv=qv$ for all $v\in V_x$, Lemma~\ref{le:warmup} yields that
$\dim \ker(1+c)|_{V_x\otimes V_x}=e(e+1)/2$ if $q=-1$
and $\dim \ker(1+c)|_{V_x\otimes V_x}\le e(e-1)/2$ if $q\not=-1$.
This implies the claim.
\end{proof}

\begin{prop}
\label{pro:orbit8a}
Let $G,V,X,x,y,\cO,q,e$ be as in Proposition~\ref{pro:orbit8}.
Let $v_x\in V_x\setminus\{0\}$, $v_y\in V_y\setminus\{0\}$.
The following are equivalent.
\begin{enumerate}
\item The pair $(V,\cO)$ is optimal with respect to $1+c_{12}+c_{12}c_{23}$.
\item $e=\dim V_x=1$, $q=-1$ and $(1+c^3)(v_x\otimes v_y)=0$. 
\end{enumerate}
\end{prop}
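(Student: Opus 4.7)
The plan is to combine Propositions~\ref{pro:Oineq}, \ref{pro:imm} and \ref{pro:orbit8} to first reduce the question to the case $e=1$, $q=-1$, and then to reanalyze the linear system in the proof of Proposition~\ref{pro:orbit8} under this restriction, isolating the single scalar condition that must vanish for the upper bound to be attained.

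Since $\imm{\cO}=3/8$ and $\dim V_{\cO}^{\otimes 3}=8e^3$, optimality reads $\rdimker =3e^3$. The upper bounds of Proposition~\ref{pro:orbit8} show this is possible only when $e=1$ and $q=-1$: the inequality $3e^3\le e^2(5e-1)/2$ has no positive solution, while $3e^3\le e^2(5e+1)/2$ forces $e=1$. Under these assumptions, the relation $x\trid (y\trid x)=y$ becomes the braid relation $xyx=yxy$ in $G$, so $y=(xy)x(xy)^{-1}$ is conjugate to $x$, and similarly $z=xyx^{-1}$ and $w=xzx^{-1}$ are conjugate to $x$ (one also checks $w\neq z$, as otherwise $x$ would commute with $z$, forcing $xy=yx$ and contradicting $x\neq y$). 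Because $V_x,V_y,V_z,V_w$ are one-dimensional, all four elements act on their own isotypic components by $-1$. Fixing $v_x\in V_x$, $v_y\in V_y$ and setting $v_z:=xv_y$, $v_w:=xv_z$, I introduce scalars $\alpha ,\beta \in \fie $ by $zv_x=\alpha v_y$ and $yv_z=\beta v_x$; a short three-step braid computation gives $c^3(v_x\otimes v_y)=\alpha \beta \,v_x\otimes v_y$, so $(1+c^3)(v_x\otimes v_y)=0$ is equivalent to $\alpha \beta =-1$.

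The next step is to revisit the system \eqref{eq:1}--\eqref{eq:8} from the proof of Proposition~\ref{pro:orbit8} in this special case. Equations \eqref{eq:1}--\eqref{eq:5} express five components in terms of the free parameters $\tau_{xxy},\tau_{zxz},\tau_{zzw}$. Since $c$ acts by $-1$ on every $V_a\otimes V_a$ with $a\in\{x,y,z,w\}$, the identities $(1+c_{23})(\tau_{yzz})=0$ and $(1+c_{12})(\tau_{zzw})=0$ hold automatically, and then \eqref{eq:4} and \eqref{eq:1} force $\tau_{xyz}=0=\tau_{zwx}$. This kills constraints \eqref{eq:6} and \eqref{eq:8} at once. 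The single surviving constraint \eqref{eq:7} simplifies, after substitution of \eqref{eq:3}, to $(c_{12}c_{23})^2(\tau_{zxz})=\tau_{zxz}$, which a direct computation rewrites as $\alpha \delta =1$, where $\delta $ is defined by $zv_w=\delta v_x$.

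The main obstacle is the last computational step: identifying $\delta $ with $-\beta $, so that $\alpha \delta =1$ becomes $\alpha \beta =-1$. This identification uses $v_w=x^2v_y$, the braid relation $xyx=yxy$, and the equality $yv_y=-v_y$ (the latter being why the conjugacy of $y$ to $x$ in $G$ is established first). With $\delta =-\beta $ in hand, \eqref{eq:7} is trivially satisfied precisely when $(1+c^3)(v_x\otimes v_y)=0$, giving kernel dimension $3=3e^3$ and optimality in one direction; otherwise $\tau_{zxz}=0$ is forced and the kernel has only dimension $2$, preventing optimality. This yields both implications of the proposition.
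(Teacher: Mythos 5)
Your proof is correct, and it follows the paper's overall strategy (re-examine the linear system \eqref{eq:1}--\eqref{eq:8} from Proposition~\ref{pro:orbit8}, observe that the kernel is parametrized by the three free components $\tau_{xxy},\tau_{zxz},\tau_{zzw}$ subject to \eqref{eq:6}--\eqref{eq:8}, and isolate a single scalar obstruction on $\tau_{zxz}$), but two of your steps are genuinely different. First, you extract $e=1$ and $q=-1$ from the numerical upper bounds of Proposition~\ref{pro:orbit8}: optimality means $\rdimker =3e^3$, and $3e^3\le e^2(5e-1)/2$ has no solution while $3e^3\le e^2(5e+1)/2$ forces $e=1$. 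The paper instead derives the same conclusion structurally, by noting that Equation~\eqref{eq:9} (equivalent to \eqref{eq:6}) holds for all free tensors if and only if $(1+c)(V_x\otimes V_x)=0$. Your route is slightly cleaner as a deduction from an already-proved statement, though it hides the reason the bound is what it is; the paper's route makes the role of $\ker(1+c)|_{V_x\otimes V_x}$ transparent and is what generalizes to Proposition~\ref{pro:orbit8b}. Second, where the paper reduces the surviving constraint \eqref{eq:5a} to $c_{12}^{-1}(1+c_{12}^3)\tau_{zxz}=0$ and then invokes $\dim V_x=1$ to identify this with $(1+c^3)(v_x\otimes v_y)=0$, you compute everything in explicit scalars $\alpha,\beta,\delta$ and verify the identification $\delta=-\beta$ directly from $v_w=x^2v_y$, the braid relation $xyx=yxy$, and $yv_y=-v_y$. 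This is more work but makes fully explicit the equivalence between the condition on $\tau_{zxz}\in V_z\otimes V_x\otimes V_z$ and the stated condition on $v_x\otimes v_y$, a point the paper passes over quickly. All the intermediate claims you use ($y,z,w$ conjugate to $x$, hence acting by $q$ on their components; $(1+c_{23})\tau_{yzz}=(1+c_{12})\tau_{zzw}=0$ killing \eqref{eq:6} and \eqref{eq:8}; the constraint \eqref{eq:7} becoming $(c_{12}c_{23})^2\tau_{zxz}=\tau_{zxz}$) check out, and both implications of the equivalence are covered.
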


\begin{proof}
We use the same notation as in the proof of Proposition \ref{pro:orbit8}.
Since $\imm{\cO}=3/8$, (1) holds if and only if
Equations \eqref{eq:6}--\eqref{eq:8} 
are satisfied for all tensors $\tau_{xxy}\in V_x\otimes V_x\otimes V_y$,
$\tau_{zxz}\in V_z\otimes V_x\otimes V_z$ and 
$\tau_{zzw}\in V_z\otimes V_z\otimes V_w$, where $\tau _{zwx}$, $\tau _{yzz}$,
$\tau _{xyz}$, $\tau _{xzx}$, $\tau _{wxx}$ are as in
\eqref{eq:1}--\eqref{eq:5}.
By Equation \eqref{eq:2}, Equation~\eqref{eq:9} holds
for all $\tau_{xxy}$, $\tau_{zxz}$ and 
$\tau_{zzw}$ if and only if
\begin{align}
  (1+c)(V_x\otimes V_x)=0,
  \label{eq:1+c}
\end{align}
that is, $\dim V_x=1$ and $q=-1$.

Assume now that Equation~\eqref{eq:1+c} holds. Then $(1+c)(V_u\otimes V_u)=0$
for all $u\in X$. Hence \eqref{eq:1}--\eqref{eq:8} are equivalent to
\begin{align}
  \label{eq:1a}
  \tau _{zwx}=&\;0, \quad \tau _{xyz}=0,\\
  \label{eq:2a}
  \tau _{yzz}=&\;-c_{12}(\tau _{zxz})-c_{12}c_{23}(\tau _{zzw}),\\
  \label{eq:3a}
  \tau _{xzx}=&\;-(c_{12}c_{23})^{-1}(\tau _{zxz}),\\
  \label{eq:4a}
  \tau _{wxx}=&\;c_{12}(c_{12}c_{23})^{-1}(\tau _{zxz})-c_{12}c_{23}(\tau _{xxy}),\\
  \label{eq:5a}
  0=&\;-(c_{12}c_{23})^{-1}(\tau _{zxz})+c_{12}c_{23}(\tau _{zxz}).
\end{align}
Clearly, Equation~\eqref{eq:5a} is equivalent to
\begin{align} \label{eq:6a}
  \tau_{zxz}=(c_{12}c_{23})^2(\tau_{zxz})=c_{12}^2c_{23}c_{12}(\tau_{zxz}).
\end{align}
Since $c_{12}(\tau _{zxz})\in V_y\otimes V_z\otimes V_z$, we conclude that
$c_{23}c_{12}(\tau _{zxz})=-\tau _{zxz}$ and hence Equation~\eqref{eq:6a} is equivalent
to
$$ c_{12}^{-1}(1+c_{12}^3)\tau _{zxz}=0. $$
Since $\dim V_x=1$, this implies the equivalence claimed in the Proposition.
\end{proof}

\begin{prop}
\label{pro:orbit8b}
Let $G$ be a group, $V$ a non-zero Yetter-Drinfeld 
module over $\fie G$, and $X=\supp V$.
Let $x,y\in X$, $\cO=\cO(x,x,y)\subseteq X^3$,
$v_x\in V_x\setminus\{0\}$, $v_y\in V_y\setminus\{0\}$ and $q\in \fie
\setminus \{0,-1\}$.
Assume that $x\trid (y\trid x)=y$, $x\not=y$, $\dim V_x=1$ and $xv=qv$
for all $v\in V_x$.
Then 
$\dim\ker(1+c_{12}+c_{12}c_{23})|_{V_{\cO}^{\otimes3}}\leq 2$
and if equality holds then
$(1+c^3)(v_x\otimes v_y)=0$.
\end{prop}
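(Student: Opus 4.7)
The inequality $\rdimker \le 2$ is the case $e = 1$ of Proposition~\ref{pro:orbit8}(2), so the content to prove is the necessary condition $(1+c^3)(v_x\otimes v_y) = 0$ when equality holds. I plan to extract this by specializing the equations used in the proof of Proposition~\ref{pro:orbit8} to the one-dimensional setting.

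Set $z = x\trid y$ and $w = x\trid z$. By Lemma~\ref{lem:braided_cycle}, the elements $x, y, z, w$ are $G$-conjugate, so $\dim V_u = 1$ for each $u \in \{x,y,z,w\}$. Fix nonzero $v_u \in V_u$ and encode the braiding via scalars $\lambda_{u,v} \in \fie\setminus\{0\}$ defined by $c(v_u \otimes v_v) = \lambda_{u,v}(v_{u\trid v} \otimes v_u)$. Writing $\tau_{xxy} = a\,v_x\otimes v_x\otimes v_y$, $\tau_{zxz} = b\,v_z\otimes v_x\otimes v_z$, $\tau_{zzw} = c\,v_z\otimes v_z\otimes v_w$, relations \eqref{eq:1}--\eqref{eq:5} express the remaining five components of $\tau$ as monomials in the $\lambda_{u,v}$ multiplied by $a$, $b$, $c$, and the three constraints \eqref{eq:6}, \eqref{eq:7}, \eqref{eq:8} become scalar linear equations in $(a, b, c)$.

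Since $q \neq -1$, the operator $1+c_{12}$ acts on $V_x \otimes V_x \otimes V_y$ as multiplication by $1+q \ne 0$. Hence Equation~\eqref{eq:9} (equivalent to \eqref{eq:6}) determines $a$ uniquely from $b$ and $c$. After substitution, \eqref{eq:7} and \eqref{eq:8} become two linear equations in $(b, c)$; by construction $\rdimker$ equals $2$ minus the rank of the resulting $2\times 2$ system, so equality $\rdimker = 2$ forces every coefficient of this system to vanish. A straightforward expansion in terms of the $\lambda_{u,v}$ shows that the vanishing conditions imply the cyclic identity $\lambda_{x,y}\lambda_{z,x}\lambda_{y,z} = -1$. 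On the other hand, using the rack relations $z\trid x = y$ and $y\trid z = x$ from Lemma~\ref{lem:braided_cycle}, direct iteration yields $c^3(v_x\otimes v_y) = \lambda_{x,y}\lambda_{z,x}\lambda_{y,z}\,v_x\otimes v_y$, so the identity is exactly $(1+c^3)(v_x\otimes v_y) = 0$. The main obstacle is the bookkeeping in the final computation: after substituting \eqref{eq:1}--\eqref{eq:5} and the value of $a$ from \eqref{eq:9}, the two remaining equations carry many factors of $\lambda_{u,v}$, and one must verify that at least one reduced coefficient is a non-zero multiple of $\lambda_{x,y}\lambda_{z,x}\lambda_{y,z} + 1$. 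The cyclic structure is natural, since $\{x,y,z\}$ is a copy of $\mathbb{D}_3$ and $1+c_{12}+c_{12}c_{23}$ is a cubic braid symmetrizer; this also mirrors the parallel condition appearing in Proposition~\ref{pro:orbit8a}.
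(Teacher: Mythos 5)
Your proposal is correct and follows essentially the same route as the paper's proof: since $q\neq-1$ and $\dim V_x=1$, the component $\tau_{xxy}$ is determined by $\tau_{zxz}$ and $\tau_{zzw}$ via \eqref{eq:6} and \eqref{eq:2}, which gives the bound $2$, and equality forces the residual constraint coming from \eqref{eq:7} to vanish identically in these two free parameters. The coefficient expansion you defer is exactly what the paper carries out in operator form: using the braid relation $c_{12}c_{23}c_{12}=c_{23}c_{12}c_{23}$, equation \eqref{eq:7} reduces to \eqref{eq:dim2}, whose $\tau_{zxz}$-coefficient is an invertible operator times $1+c_{12}^3$, i.e.\ a non-zero multiple of $1+\lambda_{x,y}\lambda_{z,x}\lambda_{y,z}$ as you predict.
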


\begin{proof}
	We use the same notation as in the proof of Proposition \ref{pro:orbit8}. Let $\tau\in\ker(1+c_{12}+c_{12}c_{23})|_{V_{\cO}^{\otimes3}}$
	as in the proof of Proposition \ref{pro:orbit8}. Since $\dim V_x=1$ and $q\ne-1$, we conclude that
	\[
	\tau_{xxy}=c_{12}c_{23}c_{12}(\tau_{yzz})=-c_{12}c_{23}c_{12}^2(\tau_{zxz})-c_{12}c_{23}c_{12}^2c_{23}(\tau_{zzw}),
	\]
	where the first equation follows from \eqref{eq:4} and \eqref{eq:6} and the second from \eqref{eq:2}.
	Hence $\dim\ker(1+c_{12}+c_{12}c_{23})|_{V_{\cO}^{\otimes3}}\leq 2$. 
Equation \eqref{eq:7} implies that 
\begin{equation}
\label{eq:dim2}
0=-c_{23}^{-1}c_{12}^{-1}(1+c_{12}^3)(\tau_{zxz})-c_{23}^{-1}c_{12}^{-1}(1+c_{12}^3)c_{23}(1+c_{12})(\tau_{zzw}).
\end{equation}
Thus, if 
$\dim\ker(1+c_{12}+c_{12}c_{23})|_{V_{\cO}^{\otimes3}}=2$,
then Equation \eqref{eq:dim2} holds for all $\tau_{zxz}$ and $\tau_{zzw}$.
This implies the claim.
\end{proof}

\section{The inequality in the main theorem for braided racks}
\label{sec:ineq}

Let $G$ be a group, $x\in G$, $X$ the conjugacy class of $x$ in $G$,
and let $d\in \ndN $.
Assume that $X$ is a finite indecomposable braided rack of size $d$.
Let $V$ be a finite-dimensional Yetter-Drinfeld module over $G$ with
$\supp V=X$ and let $e=\dim V_x$. Let $q\in \fie \setminus \{0\}$
and assume that $xv=qv$ for all $v\in V_x$.
We collect properties which hold if $\toba (V)$ has many cubic relations.
The number $m$ was defined in Equation~\eqref{eq:mt}.

\begin{prop} \label{prop:verygi}
  Let $d_1,d_8\in \ndN _0$.
  Assume that $\dim \ker (1+c_{12}+c_{12}c_{23})|_{V_{\cO}^{\otimes 3}}\le d_1$
  for all Hurwitz $1$-orbits $\cO \subseteq X^3$ and
  $\dim \ker (1+c_{12}+c_{12}c_{23})|_{V_{\cO}^{\otimes 3}}\le d_8$
  for all Hurwitz $8$-orbits $\cO \subseteq X^3$.
  If $\toba (V)$ has many cubic relations, then
  \begin{align}
    \label{eq:gen_ineq18}
    12k_3d_8+24d_1-k_3^2-30k_3+m-8d^2(e^3-1)+8(e-1) \ge 0.
  \end{align}
\end{prop}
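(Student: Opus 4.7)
The plan is to combine the many-cubic-relations lower bound from Theorem~\ref{thm:main}(3) with an upper bound on $\dim\ker(1+c_{12}+c_{12}c_{23})$ obtained by decomposing the kernel over Hurwitz orbits via~\eqref{eq:dimker}. Since $X$ is a single conjugacy class of $G$, every fibre $V_y$ ($y\in X$) has dimension $e$, whence $\dim V = de$, and the hypothesis that $\toba(V)$ has many cubic relations reads
\[
\dim \ker(1 + c_{12} + c_{12}c_{23}) \;\ge\; \tfrac{1}{3}\,de\,(d^{2}e^{2}-1).
\]

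For the matching upper bound I would bound each summand in~\eqref{eq:dimker}. For a $1$-orbit I use the hypothesis $d_{1}$ and for an $8$-orbit the hypothesis $d_{8}$; for orbits of size $n \in \{3,6,9,12,16,24\}$, Propositions~\ref{pro:Oineq} and~\ref{pro:imm} give the bound $\imm{\cO}\cdot n\cdot e^{3}$, namely $e^{3},\,2e^{3},\,3e^{3},\,4e^{3},\,5e^{3},\,7e^{3}$ respectively. The numbers $\nro{n}$ for $n<24$ are supplied by Proposition~\ref{pro:sizes}, and $\nro{24}$ is recovered from the total-triples identity $\sum_{n} n\,\nro{n}=d^{3}$. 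Summing everything expresses the upper bound as an explicit polynomial in $d,e,k_{2},k_{3},t,m$ and in $d_{1},d_{8}$.

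To reach the stated form I use the braided-rack identity $k_{2}+k_{3}=d-1$, which follows from Definition~\ref{def:braided} and Lemma~\ref{lem:crossed} because every $y\neq x$ satisfies exactly one of $x\trid y=y$ or $x\trid (y\trid x)=y$. Substituting $k_{2}=d-1-k_{3}$ in the upper bound makes the contributions of $k_{2}^{2}$, $k_{2}k_{3}$ and $t$ combine, and the variable $t$ in fact cancels out entirely. Multiplying the resulting inequality through by $24/d$ clears the denominators in the orbit counts; the leading $d^{2}e^{3}$ contribution produced by $\nro{24}$ then matches the $d^{2}e^{3}$ part coming from the lower bound $\tfrac{1}{3}de(d^{2}e^{2}-1)$, and what remains is exactly~\eqref{eq:gen_ineq18}.

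The main obstacle is the coefficient bookkeeping in this last step: seven distinct orbit sizes each contribute a fractional coefficient coming from Proposition~\ref{pro:sizes}, and one has to substitute the formula $\nro{24}=\bigl(d^{3}-\sum_{n<24}n\,\nro{n}\bigr)/24$ cleanly so that the high-order $d^{3}e^{3}$ and the awkward $k_{2}^{2}$, $k_{2}k_{3}$, $dt$ terms all cancel or recombine correctly into the compact expression on the left-hand side of~\eqref{eq:gen_ineq18}. Once this is done carefully, no additional idea is needed.
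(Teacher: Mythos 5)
Your proposal is correct and follows exactly the paper's own argument: bound the kernel on each Hurwitz orbit by $d_1$, $d_8$, or $\imm{\cO}\,\dim V_\cO^{\otimes 3}$ via Propositions~\ref{pro:Oineq} and~\ref{pro:imm}, insert the orbit counts from Proposition~\ref{pro:sizes} together with $\sum_n n\,\nro{n}=d^3$ and $k_2=d-1-k_3$ (under which $t$ indeed cancels), and compare with the lower bound $\tfrac13 de\bigl((de)^2-1\bigr)$. One caveat on your final claim that the bookkeeping yields ``exactly'' \eqref{eq:gen_ineq18}: carrying the computation through gives $12k_3d_8+24d_1-e^3(k_3^2+30k_3-m)-8(e^3-e)\ge 0$, which agrees with the printed \eqref{eq:gen_ineq18} only when $e=1$ but is the version consistent with Lemma~\ref{lem:gen_ineq1} and with every later application, so the displayed formula appears to carry a typo rather than your derivation an error.
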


\begin{proof}
  Assume that $\toba (V)$ has many cubic relations.
  Proposition~\ref{pro:Oineq} implies that
  \begin{align*}
    \sum _{\cO\,|\,\#\cO \notin \{1,8\}}\imm{\cO}\dim V_{\cO }^{\otimes 3}
    +\sum _{\cO\,|\,\#\cO \in \{1,8\}}
    \dim \ker (1+c_{12}+c_{12}c_{23})|_{V_{\cO}^{\otimes 3}}\qquad&\\
    \ge \frac{de( (de)^2-1)}{3}.&
  \end{align*}
  Since the only Hurwitz orbits have sizes $1$, $3$, $6$, $8$, $9$, $12$,
  $16$ and $24$, we further obtain that
  \begin{equation}
    \label{eq:Hurwitz_decomposition}
    \nro{1}+3\nro{3}+6\nro{6}+8\nro{8}+9\nro{9}+12\nro{12}+16\nro{16}+24\nro{24}=d^{3}.
  \end{equation}
  Since $X$ is braided, we also know that $k_2=d-k_3-1$.
  Using Proposition~\ref{pro:sizes} and
  the numbers $\imm{\cO}$ from Proposition~\ref{pro:imm}, we conclude that
  the inequality in \eqref{eq:gen_ineq18} holds.
\end{proof}

\begin{lemma} \label{lem:gen_ineq1}
\begin{enumerate}
 \item
 Let $d_1=e(e^2-1)/3$ and $d_8=e^2(5e+1)/2$.
 Then the inequality in \eqref{eq:gen_ineq18} is equivalent to
 $ek_3^2-em-6k_3\le 0$.
 \item
 Let $d_1=\frac{e(e^2+2)}{3}$ and $d_8=\frac{e^2(5e-1)}2$.
 Then the inequality in \eqref{eq:gen_ineq18} is equivalent to
 $e^2 k_3^2-e^2m+6ek_3-24\le 0$.
\end{enumerate}
\end{lemma}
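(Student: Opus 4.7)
The lemma is a direct algebraic simplification of the inequality \eqref{eq:gen_ineq18} under the given substitutions for $d_1$ and $d_8$. The plan is to expand all terms and verify that, after applying the braided-rack identity $k_2 = d - 1 - k_3$ (which already underlies the derivation of \eqref{eq:gen_ineq18} via the orbit counts of Proposition~\ref{pro:sizes}), all the $d$-dependence disappears, leaving an expression that factors into $e^2$ (part (1)) or $e$ (part (2)) times the claimed simpler polynomial in $e$, $k_3$, $m$.

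For part (1), I would first compute
\[
24 d_1 = 8e(e^2 - 1) = 8e^3 - 8e, \qquad 12 k_3 d_8 = 6 k_3 e^2(5e+1) = 30 e^3 k_3 + 6 e^2 k_3.
\]
Substituting into \eqref{eq:gen_ineq18}, the linear contributions $-8e$ from $24 d_1$ and $+8e$ from $8(e-1)$ cancel, and the terms $8 e^3 - 8 - 8 d^2(e^3-1) + 8 d^2$ regroup as $-8(d^2-1)(e^3-1)$. The remaining $d$-dependence is absorbed by the identity $24 T' = d\bigl(8(d^2-1) + m - k_3^2 - 30 k_3\bigr)$ that is obtained by substituting $k_2 = d-1-k_3$ into the weighted sum of orbit counts from Proposition~\ref{pro:sizes}. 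What is left factors as $e^2(em - e k_3^2 + 6 k_3)$; since $e \geq 1$, this is non-negative if and only if $e k_3^2 - em - 6 k_3 \leq 0$.

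For part (2), the computation is completely analogous. One computes $24 d_1 = 8e^3 + 16e$ and $12 k_3 d_8 = 30 e^3 k_3 - 6 e^2 k_3$, and after the same style of simplification the inequality collapses to $e(e^2 m - e^2 k_3^2 - 6 e k_3 + 24) \geq 0$, equivalent to the claimed $e^2 k_3^2 - e^2 m + 6 e k_3 - 24 \leq 0$.

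The main obstacle is purely bookkeeping: one must carefully track the coefficients of $d^2$, $k_3^2$, $k_2 k_3$, etc., and verify that each combination produces either the desired residual term or a cancellation. There is no conceptual subtlety beyond routine polynomial algebra.
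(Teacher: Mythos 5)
Your target expressions and your values of $24d_1$ and $12k_3d_8$ are correct in both parts, but the middle of the computation does not go through as written. The step ``the remaining $d$-dependence is absorbed by the identity $24T'=d\bigl(8(d^2-1)+m-k_3^2-30k_3\bigr)$'' is not a legitimate manipulation of \eqref{eq:gen_ineq18}: that inequality is a closed expression in $d,e,k_3,m,d_1,d_8$, the quantity $T'$ (your weighted sum of orbit counts) does not occur in it, and an identity that was already consumed in \emph{deriving} \eqref{eq:gen_ineq18} cannot be applied to it a second time to cancel the term $-8(d^2-1)(e^3-1)$. Concretely, after your substitution the left-hand side of the printed \eqref{eq:gen_ineq18} reads
$$30e^3k_3+6e^2k_3-k_3^2-30k_3+m-8(d^2-1)(e^3-1),$$
which for $e\ge 2$ still depends on $d$ and is not equal to $e^2(em-ek_3^2+6k_3)$. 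For instance, with $e=2$, $d=4$, $k_3=m=3$ the printed inequality evaluates to $-144<0$ while $ek_3^2-em-6k_3=-6\le 0$, so the two conditions are not equivalent as stated.

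What is actually going on is that \eqref{eq:gen_ineq18} as printed contains a typographical error, and the lemma is stated relative to the intended form. Redoing the computation behind Proposition~\ref{prop:verygi} --- eliminating $\nro{24}$ via \eqref{eq:Hurwitz_decomposition}, substituting $k_2=d-k_3-1$, and using $\nro{3}+2\nro{6}+3\nro{9}+4\nro{12}+5\nro{16}+7\nro{24}=\frac{d}{24}\bigl(8(d^2-1)+m-k_3^2-30k_3\bigr)$ --- one finds that the condition for many cubic relations is
$$12k_3d_8+24d_1+e^3\bigl(m-k_3^2-30k_3\bigr)-8e(e^2-1)\ \ge\ 0,$$
in which $d$ has cancelled completely (the printed version drops the factor $e^3$ on $m-k_3^2-30k_3$ and misrenders $-8e(e^2-1)$). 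With this corrected form your substitutions give exactly $-e^2(ek_3^2-em-6k_3)\ge 0$ in part (1) and $-e(e^2k_3^2-e^2m+6ek_3-24)\ge 0$ in part (2), i.e.\ the stated equivalences, by precisely the clean cancellation you were aiming for. So the lemma is true and your endpoints are right, but a correct write-up must start from the corrected inequality (or flag and repair the discrepancy); as it stands, the passage from the printed \eqref{eq:gen_ineq18} to the claimed residual is a genuine gap.
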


\begin{proof}
  This follows by direct calculation.
\end{proof}

\begin{prop}
\label{pro:gen_ineq}
Assume that $\toba (V)$ has many cubic relations.
Then $k_3\le 6$. Further, if $e\ge 2$ then $k_3\leq 3$.
\end{prop}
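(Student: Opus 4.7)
The plan is to combine Proposition~\ref{prop:verygi} with the explicit bounds on $\dim\ker(1+c_{12}+c_{12}c_{23})|_{V_\cO^{\otimes 3}}$ for Hurwitz $1$- and $8$-orbits from Propositions~\ref{pro:orbit1} and~\ref{pro:orbit8}, rephrase the resulting inequality via Lemma~\ref{lem:gen_ineq1}, and then exploit the elementary estimate $m\le k_3$ together with the divisibility $3\mid m$ of Remark~\ref{rem:3divm}.

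First I would split into two cases according to the value of $q$. If $q=-1$, Proposition~\ref{pro:orbit1} gives the exact value $d_1=e(e^2-1)/3$ for the $1$-orbits and Proposition~\ref{pro:orbit8}(1) gives $d_8\le e^2(5e+1)/2$ for the $8$-orbits, so Lemma~\ref{lem:gen_ineq1}(1) converts \eqref{eq:gen_ineq18} into
\[
ek_3^2-em-6k_3\le 0.
\]
If instead $q\ne -1$, the uniform bound $d_1\le e(e^2+2)/3$ from Proposition~\ref{pro:orbit1} combines with $d_8\le e^2(5e-1)/2$ from Proposition~\ref{pro:orbit8}(2) and Lemma~\ref{lem:gen_ineq1}(2) to yield
\[
e^2k_3^2-e^2m+6ek_3-24\le 0.
\]

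The crucial auxiliary input is $m\le k_3$. Since $X$ is braided, any $y\in X$ with $\varphi_x(y)\ne y$ automatically satisfies $x\trid(y\trid x)=y$ by Definition~\ref{def:braided}, so $k_3$ counts \emph{all} non-fixed points of $\varphi_x$ on $X$, whereas $m$ counts only those lying in a $\varphi_x$-orbit of size exactly~$3$. Substituting $m\le k_3$ into the second inequality gives $e^2k_3^2+(6e-e^2)k_3-24\le 0$, whose positive root is $3$ when $e=1$ and at most $1$ when $e\ge 2$; this case therefore already delivers $k_3\le 3\le 6$ when $e=1$ and $k_3\le 1\le 3$ when $e\ge 2$. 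Substituting into the first inequality gives $k_3(ek_3-e-6)\le 0$, i.e.\ $k_3\le 1+6/e$, which is $\le 3$ as soon as $e\ge 3$.

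The remaining task is to rule out the two borderline values $(e,k_3)\in\{(1,7),(2,4)\}$ in the case $q=-1$, and this is where I expect the main (if modest) obstacle to lie. Rearranging the first inequality as $em\ge ek_3^2-6k_3$, one sees that $(e,k_3)=(1,7)$ would force $m\ge 7$ with $m\le 7$ and $3\mid m$, while $(e,k_3)=(2,4)$ would force $m\ge 4$ with $m\le 4$ and $3\mid m$; both situations are impossible. Closing these two gaps with the divisibility constraint completes the bounds $k_3\le 6$ and, for $e\ge 2$, $k_3\le 3$.
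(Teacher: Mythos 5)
Your proposal is correct and follows essentially the same route as the paper: the case split on $q=\pm 1$, the bounds $d_1,d_8$ from Propositions~\ref{pro:orbit1} and~\ref{pro:orbit8} fed into Proposition~\ref{prop:verygi} via Lemma~\ref{lem:gen_ineq1}, and the combination of $m\le k_3$ with $3\mid m$ from Remark~\ref{rem:3divm}. The only (cosmetic) difference is that the paper packages the final step for $q=-1$ as the factorization $(ek_3-6)(k_3-1)+e(k_3-m)\le 6$, whereas you first derive $k_3\le 1+6/e$ and then kill the borderline cases $(e,k_3)=(1,7)$ and $(2,4)$ separately; both arguments use exactly the same inputs.
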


\begin{proof}
  Assume first that $q=-1$. Then we can set
  $$d_1=\frac{e(e^2-1)}3,\qquad d_8=\frac{e^2(5e+1)}2$$
  in Proposition~\ref{prop:verygi}
  because of Propositions~\ref{pro:orbit1}, \ref{pro:orbit8}.
  Thus, if $\toba (V)$ has many cubic relations, then
  Proposition~\ref{prop:verygi} implies that
  the inequality in Lemma~\ref{lem:gen_ineq1}(1) holds.
  Hence
  $$(ek_3-6)(k_3-1)+e(k_3-m)\le 6.$$
  Since $e\ge 1$, $m\leq k_3$ and $3|m$ by
  Remark~\ref{rem:3divm}, the latter inequality does not hold for $k_3\ge 7$.
  Similarly, it does not hold if $k_3\ge 4$, $e\ge 2$.

  Assume now that $q\not=-1$. Then, as above, one obtains
  that the inequality in Lemma~\ref{lem:gen_ineq1}(2) holds.
  Since $m\le k_3$, it follows that $e^2k_3(k_3-1)+6ek_3-24\le 0$.
  Since $e\ge 1$, this does not happen for $k_3>3$.
\end{proof}

\section{Braided racks of degree $2$ and $3$-transposition groups}
\label{sec:deg2}

\subsection{$3$-transposition groups}

A set $D$ of involutions in a group $G$ is called
a \emph{set of $3$-transpositions} if $D$ is a union of conjugacy classes of $G$,
$G$ is generated by $D$
and for each $x,y\in D$ the product $xy$ has order $1$, $2$ or
$3$. In this case we say that the pair $(G,D)$ is a \emph{$3$-transposition
group}. For more information related to $3$-transposition groups see \cite{MR1423599}.

\begin{example}
Symmetric groups are $3$-transposition groups, where the $3$-transpo\-sitions
are the transpositions. 
\end{example}

\begin{example}
Let $(G,D)$ be a $3$-transposition group and $\pi :G\to H$ an epimorphism of groups.
Then $(H,\pi (D))$ is a $3$-transposition group.
\end{example}

All $3$-transposition groups generated by at most four elements are
classified in \cite{MR1330798}.
Let $F(k,d)$ be the largest $3$-transposition group $(G,D)$, where
$D$ has size $d$ and
$G$ can be generated by $k$ (and not less that $k$) elements in $D$.

Let $(G,D)$ be a $3$-transposition group and let $Y\subseteq D$ be a subset
generating $D$ as a rack. Let $\mathcal{G}(Y)$ be the graph with vertex set
$Y$ such that $x,y\in Y$ are adjacent in $\mathcal{G}(Y)$ if and only if
$\mathrm{ord}(xy)=3$.

\begin{remark}
  The graph $\mathcal{G}(Y)$ is the complementary graph of the commuting graph
  of $Y$ defined in \cite[Ch.\,2]{MR1423599}.
\end{remark}

One says that two $3$-transposition groups $(G_{1},D_{1})$ and $(G_{2},D_{2})$
\emph{have the same central type} if $G_{1}/Z(G_{1})\simeq G_{2}/Z(G_{2})$
as $3$-transposition groups.

\begin{theorem} \label{thm:3trans}
Let $(G,D)$ be a $3$-transposition group which is generated by a subset
$Y$ of $D$ such that $\#Y\le 3$ and $\mathcal{G}(Y)$ is connected.
Then $G$ has
the same central type as one of the groups $F(1,1)\simeq\mathbb{Z}_{2}$,
$F(2,3)\simeq\SG 3$, $F(3,6)\simeq\SG 4$,
$F(3,9)\simeq\mathrm{SU}(3,2)'$.
\end{theorem}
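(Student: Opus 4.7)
The plan is to perform a case analysis on $\#Y\in\{1,2,3\}$ and, when $\#Y=3$, on the isomorphism type of the connected graph $\mathcal{G}(Y)$. Since a connected graph on at most three vertices is either a single vertex, a single edge, the path $P_3$, or the triangle $K_3$, there are exactly four cases to consider.

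First I would dispose of the two smallest cases directly from the Coxeter presentation. If $\#Y=1$, then $G=\langle y\rangle\simeq\mathbb{Z}_2=F(1,1)$. If $\#Y=2$, connectedness of $\mathcal{G}(Y)$ forces the two generators $y_1,y_2\in Y$ to satisfy $\mathrm{ord}(y_1y_2)=3$, so the $3$-transposition axiom and the Coxeter relations yield $G\simeq\SG 3=F(2,3)$. Since both $\mathbb{Z}_2$ and $\SG 3$ have trivial center, ``same central type'' in these cases reduces to outright isomorphism.

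Next I would handle the case $\#Y=3$ with $\mathcal{G}(Y)\simeq P_3$. After relabelling, the generators $y_1,y_2,y_3$ satisfy $\mathrm{ord}(y_1y_2)=\mathrm{ord}(y_2y_3)=3$ and $\mathrm{ord}(y_1y_3)=2$, which are exactly the Coxeter relations of type $A_3$. Thus $G$ is a quotient of $\SG 4$; since $\SG 4$ itself is a $3$-transposition group on its six transpositions and has trivial center, every such $G$ is a central quotient of $F(3,6)\simeq\SG 4$, hence of the same central type.

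The main obstacle is the remaining case, in which $\#Y=3$ and $\mathcal{G}(Y)\simeq K_3$. Here all three pairs of generators satisfy $\mathrm{ord}(y_iy_j)=3$, so the underlying Coxeter group is the infinite affine Weyl group of type $\widetilde{A}_2$. The extra $3$-transposition axioms, namely that $D$ is a union of conjugacy classes of $G$ and that every product of two elements of $D$ has order $1$, $2$ or $3$, impose further relations that collapse $\widetilde{A}_2$ to a finite group. Identifying the resulting quotient up to central type as $F(3,9)\simeq\mathrm{SU}(3,2)'$ is the genuinely nontrivial point, and for this I would appeal to the classification of $3$-transposition groups generated by at most four elements from \cite{MR1330798}; from that list one reads off $F(3,9)$ as the unique maximal $3$-transposition group realising a triangle graph on three generators, so every $G$ in this case is a central quotient of $F(3,9)$, completing the proof.
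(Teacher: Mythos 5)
Your proposal is correct, and in the decisive case it rests on exactly the same source as the paper: the paper's entire proof is a citation of \cite[Theorem 1.1]{MR1330798}, whereas you only invoke that classification for the triangle case $\mathcal{G}(Y)\simeq K_3$ and settle the cases $\#Y\le 2$ and $\mathcal{G}(Y)\simeq P_3$ by hand via Coxeter presentations. What your route buys is a self-contained treatment of everything except the genuinely hard identification of $F(3,9)\simeq\mathrm{SU}(3,2)'$, which neither you nor the paper proves from scratch. One small imprecision worth fixing in the $P_3$ case: a group satisfying the $A_3$ Coxeter relations is a quotient of $\SG 4$, but not every quotient of $\SG 4$ is central (e.g.\ $\SG 4/V_4\simeq\SG 3$), so ``central quotient, hence same central type'' is not automatic. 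The gap closes immediately, though: in any non-central (hence proper, since $Z(\SG 4)=1$) quotient the images of the two commuting generators coincide, contradicting the hypothesis that $Y$ consists of three distinct elements with $\mathrm{ord}(y_1y_3)=2$; so in fact $G\simeq\SG 4$ in that case. A similar one-line check (or a direct appeal to the ``central type'' formulation of \cite[Theorem 1.1]{MR1330798}) is needed where you assert that every $G$ in the triangle case is a central quotient of $F(3,9)$.
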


\begin{proof}
This has been proved independently by several people, see for example
\cite[Theorem 1.1]{MR1330798}.
\end{proof}

\subsection{Graphs and racks of degree two}

\begin{lemma} \label{lem:GYconn}
Let $(G,D)$ be a $3$-transposition group. Assume that $D$ is an indecomposable rack.
Let $Y\subseteq D$ be a minimal subset
generating $D$ as a rack.
Then $\mathcal{G}(Y)$ is connected. 
\end{lemma}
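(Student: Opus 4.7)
The plan is to proceed by contradiction: suppose $\mathcal{G}(Y)$ is disconnected, so $\#Y\ge 2$ and one can write $Y=Y_1\sqcup Y_2$ with both $Y_i$ non-empty and no edges between them. For distinct $x,y\in D$, which are involutions with $\mathrm{ord}(xy)\in\{2,3\}$, the absence of an edge is equivalent to $xy=yx$. Hence every element of $Y_1$ commutes with every element of $Y_2$ in $G$, and setting $H_i=\langle Y_i\rangle$ this gives $[H_1,H_2]=1$.

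Next I would introduce $D_i$, the subrack of $D$ generated by $Y_i$, and observe $D_i\subseteq H_i$ since iterated $\trid$-operations on $Y_i$ are conjugations by elements of $H_i$. Therefore every element of $D_1$ commutes with every element of $D_2$, and the key computation is to show that $D_1\cup D_2$ is itself a subrack of $D$: the only non-trivial inclusions $D_1\trid D_2\subseteq D_2$ and $D_2\trid D_1\subseteq D_1$ follow immediately from $aba^{-1}=b$ when $a,b$ commute. Since $D_1\cup D_2$ is a subrack containing $Y$ and $Y$ generates $D$ as a rack, I would conclude $D=D_1\cup D_2$.

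I would finish by cases. Minimality of $Y$ forces $D_i\ne D$ for $i=1,2$, for otherwise $Y_i$ alone would generate $D$, contradicting minimality of $Y=Y_1\sqcup Y_2$ with both parts non-empty. If $D_1\cap D_2=\emptyset$, then $D_1$ is a proper non-empty subset of $D$ with $D\trid D_1\subseteq D_1$, and Lemma~\ref{lem:rack_indecomposable} contradicts indecomposability. If instead some $d\in D_1\cap D_2$ exists, then $d\in H_1\cap H_2$ commutes with both $H_1$ and $H_2$; since the inclusions $D\subseteq\langle Y\rangle$ and $G=\langle D\rangle$ give $G=\langle Y\rangle=H_1H_2$, the element $d$ is central in $G$. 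Then $\{d\}$ is a proper non-empty subset of $D$ with $D\trid\{d\}=\{d\}$, and Lemma~\ref{lem:rack_indecomposable} again contradicts indecomposability.

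The main obstacle I anticipate is the propagation of commutativity from $Y_1,Y_2$ to the generated subracks $D_1,D_2$ together with the verification that $D_1\cup D_2$ is closed under $\trid$; once this is in place, the rest is essentially an application of Lemma~\ref{lem:rack_indecomposable}. The sub-case $D_1\cap D_2\ne\emptyset$ is mildly subtle, requiring the detour through central elements rather than a direct appeal to the lemma.
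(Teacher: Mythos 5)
Your proof is correct and follows essentially the same route as the paper: a disconnected $\mathcal{G}(Y)$ yields $Y=Y_1\sqcup Y_2$ with commuting parts, $D$ becomes the union of the two subracks $\langle Y_1\rangle$ and $\langle Y_2\rangle$, and Lemma~\ref{lem:rack_indecomposable} contradicts indecomposability. Your explicit treatment of the case $\langle Y_1\rangle\cap\langle Y_2\rangle\ne\emptyset$ via central elements is a welcome elaboration of a step the paper handles only tersely through the minimality of $Y$.
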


\begin{proof}
Assume that $\mathcal{G}(Y)$ is not connected. Let $Y=Y_1\sqcup Y_2$ be a decomposition
into non-empty disjoint subsets such that
$y_1\trid y_2=y_2$ for all $y_1\in Y_1$, $y_2\in Y_2$. Then
$$D=\langle Y\rangle =\langle Y_1\rangle \cup \langle Y_2\rangle $$
is a decomposition of the rack $D$ into the union of two subracks
and by the minimality of $Y$ we may assume that
$Y_1\cap \langle Y_2\rangle =\emptyset$, $Y_2\cap \langle Y_1\rangle =\emptyset $.
Then $\langle Y_1\rangle \cap \langle Y_2\rangle =\emptyset $, a contradiction to
the indecomposability of $D$ and to Lemma~\ref{lem:rack_indecomposable}.
\end{proof}

\subsection{Examples}

Using the classification of $3$-transposition groups generated by at most
three elements given in Theorem~\ref{thm:3trans}, it is not difficult to produce
examples of braided racks of degree two. 

\begin{example}
\label{example:deg2_size1}
The $3$-transposition group $F(1,1)\simeq\mathbb{Z}_2$
gives the braided rack of one element. 
\end{example}

\begin{example}
\label{example:deg2_size3}
Figure \ref{fig:DD_A2} gives the $3$-transposition group $F(2,3)\simeq\SG 3$. 
The conjugacy class of involutions of $\SG 3$ gives a braided rack isomorphic to
$\mathbb{D}_3$. 
In this case $k_3=2$, see Table \ref{tab:deg2_k3}, and
$\reG X\simeq\SG 3$.
\end{example}

\begin{figure}[h]
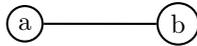

	\begin{center}
\psmatrix[mnode=circle]
a&b
\ncline{1,1}{1,2}
\endpsmatrix
\caption{Diagram of type $(ab)$}
\label{fig:DD_A2}
\end{center}
\end{figure}

\begin{example}
\label{example:deg2_size6}
Figure \ref{fig:DD_A3} gives the $3$-transposition group $F(3,6)\simeq\SG 4$. 
The conjugacy class of transpositions of $\SG 4$ gives a braided 
rack isomorphic to $\mathcal{A}$. In this case $k_3=4$, see Table \ref{tab:deg2_k3}, 
and $\overline{G_X}\simeq\SG 4$.
\end{example}

\begin{figure}[h]
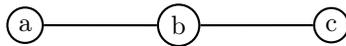

	\begin{center}
\psmatrix[mnode=circle]
a&b&c
\ncline{1,1}{1,2}
\ncline{1,2}{1,3}
\endpsmatrix
\caption{Diagram of type $(abc)$}
\label{fig:DD_A3}
\end{center}
\end{figure}

\begin{example}
  \label{example:deg2_size9}
  Figure \ref{fig:abca} gives the $3$-transposition group $F(3,9)$.
  A presentation for this group is given in \cite{MR1330798}.
  The generators are $a$, $b$ and $c$. The defining relations are
  \begin{gather*}
    a^2=b^2=c^2=(a^bc)^3=1,\\ 
    aba=bab,\;aca=cac,\;bcb=cbc.
  \end{gather*}
  The group $F(3,9)$ has order $54$ and it is isomorphic to $\mathrm{SU}(3,2)'$.
  The elements $a,b,c$ belong to the same conjugacy class $X$.
  The conjugacy class $X$ is a braided rack of $9$ elements. As a rack, $X$ is
  isomorphic to the affine rack $\mathrm{Aff}(\mathbb{F}_9,2)$.
  Further, $k_3=8$.
\end{example}

\begin{figure}[h]
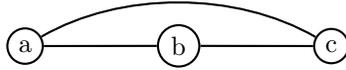

	\begin{center}
  \psmatrix[mnode=circle]
  a&b&c
  \ncline{1,1}{1,2}
  \ncline{1,2}{1,3}
  \ncarc[arcangle=30]{1,1}{1,3}
  \endpsmatrix
  \caption{The diagram $(abca)$}
  \label{fig:abca}
  \end{center}
\end{figure}

\begin{example}
\label{example:deg2_size10}
Figure \ref{fig:DD_A4} gives the $3$-transposition group
$F(4,10)\simeq\SG 5$. 
The conjugacy class of transpositions of $\SG 5$ gives a braided 
rack isomorphic to $\mathcal{C}$. In this case $k_3=6$, see Table \ref{tab:deg2_k3}, and
$\overline{G_X}\simeq\SG 5$.
\end{example}

\begin{figure}[h]
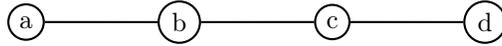

	\begin{center}
\psmatrix[mnode=circle]
a&b&c&d
\ncline{1,1}{1,2}
\ncline{1,2}{1,3}
\ncline{1,3}{1,4}
\endpsmatrix
\caption{Diagram of type $(abcd)$}
\label{fig:DD_A4}
\end{center}
\end{figure}

\begin{example}
  \label{example:deg2_size12}
  Figure \ref{fig:DD_D4} gives the $3$-transposition group $F(4,12)$.
  Following \cite{MR1330798},
  the group $F(4,12)$ is defined by generators $a$, $b$, $c$, $d$ and
  relations
  \begin{gather*} 
    a^2=b^2=c^2=d^2,\\
    aba=bab,\;ada=dad,\;aca=cac,\\
    cb=bc,\;cd=dc,\;bd=db.
  \end{gather*}
  The group $F(4,12)$ has order $192$.
  The elements $a,b,c,d$ belong to the
  same conjugacy class $X$. The conjugacy class $X$ is a braided rack of size
  $12$ and $k_3=8$.
\end{example}

\begin{figure}[h]
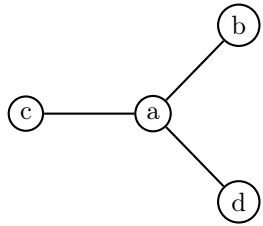

	\begin{center}
  \psmatrix[colsep=0.6,rowsep=0.6,mnode=circle]
  &&&b\\
  c&&a\\
  &&&d
  \ncline{2,1}{2,3}
  \ncline{2,3}{1,4}
  \ncline{2,3}{3,4}
  \endpsmatrix
  \caption{Diagram of type $(cab,ad)$.}
  \label{fig:DD_D4}
  \end{center}
\end{figure}


Let $\mathcal{GC}$ denote the category of pairs $(G,D)$, where $G$ is a group with trivial center,
$D$ is a conjugacy class of $G$ generating $G$, and a morphism between pairs $(G,D)$ and $(H,E)$
is a group homomorphism $f:G\to H$ such that $f(D)=E$.

\begin{prop} \cite[Prop.\,3.2]{MR1994219}
\label{prop:ecAG}
There is an equivalence of categories between
the category of faithful indecomposable racks with surjective morphisms and
the category $\mathcal{GC}$.
\end{prop}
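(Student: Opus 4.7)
The plan is to construct mutually quasi-inverse functors between $\mathcal{GC}$ and $\mathcal{R}$, where $\mathcal{R}$ denotes the category of faithful indecomposable racks with surjective rack morphisms. Define $F:\mathcal{GC}\to\mathcal{R}$ by $F(G,D)=D$ endowed with the conjugation rack structure of Example~\ref{exa:racks}, and $F(f)=f|_D$. Conversely, define $H:\mathcal{R}\to\mathcal{GC}$ by $H(X)=(\Inn(X),\varphi(X))$, where $\varphi$ is the map from \eqref{eq:varphi}, and on a surjective morphism $f:X\to Y$ set $H(f)(\varphi_x)=\varphi_{f(x)}$ for $x\in X$. On the object level for $F$: the rack $D$ is indecomposable because $G$ acts transitively on $D$ by conjugation, so $\Inn(D)$ does too; and $D$ is faithful because $\varphi_x=\varphi_y$ in $\Inn(D)$ forces $xy^{-1}\in C_G(D)=Z(G)=\{1\}$, using that $D$ generates $G$. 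On the object level for $H$: by \eqref{eq:conjugation}, $\varphi(X)$ is closed under conjugation in $\Inn(X)$ and forms a single conjugacy class by indecomposability of $X$; it generates $\Inn(X)$ by definition; and $Z(\Inn(X))=1$ because any central $g\in\Inn(X)$ satisfies $\varphi_{g\trid x}=g\varphi_x g^{-1}=\varphi_x$ for all $x\in X$, so faithfulness of $X$ gives $g\trid x=x$ for all $x$, and $\Inn(X)$ acts faithfully on $X$ by construction.

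The main technical point is well-definedness of $H$ on morphisms. Given a surjective rack morphism $f:X\to Y$, one must show that any relation $\varphi_{x_1}^{\epsilon_1}\cdots\varphi_{x_n}^{\epsilon_n}=\id_X$ in $\Inn(X)$ persists after replacing each $x_i$ by $f(x_i)$. Evaluating the left-hand side at an arbitrary $x\in X$, then applying $f$ and using that $f$ is a rack morphism, yields $\varphi_{f(x_1)}^{\epsilon_1}\cdots\varphi_{f(x_n)}^{\epsilon_n}(f(x))=f(x)$. Surjectivity of $f$ now implies that this product fixes every element of $Y$, so it equals $\id_Y$ in $\Inn(Y)$. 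The surjectivity hypothesis built into $\mathcal{R}$ is used essentially here, and this is the main obstacle; once it is dispatched, functoriality of $H$ is immediate, and $H(f)$ clearly carries $\varphi(X)$ onto $\varphi(Y)$.

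To finish, I would exhibit natural isomorphisms. For $X\in\mathcal{R}$, take $\eta_X:=\varphi:X\to\varphi(X)=(F\circ H)(X)$; this is a rack morphism by \eqref{eq:conjugation} and is bijective by faithfulness of $X$. For $(G,D)\in\mathcal{GC}$, take the conjugation map $\alpha_{(G,D)}:G\to\Inn(D)$, $g\mapsto(d\mapsto gdg^{-1})$; it is a group homomorphism carrying $D$ onto $\varphi(D)$, its kernel is $C_G(D)=Z(G)=\{1\}$ since $D$ generates $G$, and it is surjective because $\varphi(D)$ generates $\Inn(D)$. Naturality of $\eta$ and $\alpha$ in the respective categories is immediate from the definitions, and this yields the asserted equivalence.
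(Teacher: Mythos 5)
Your construction is correct and is essentially the standard one from the cited reference \cite[Prop.\,3.2]{MR1994219}: the paper itself gives no proof, only the citation, and your functors $X\mapsto(\Inn(X),\varphi(X))$ and $(G,D)\mapsto D$ together with the natural isomorphisms $\varphi$ and $g\mapsto(d\mapsto gdg^{-1})$ are exactly how that equivalence is established there. The only step worth making explicit is that transitivity of the $G$-action on $D$ transfers to $\Inn(D)$ because $D$ generates $G$, so conjugation by any $g\in G$ restricted to $D$ already lies in $\Inn(D)$; everything else, including your use of surjectivity to push relations among the $\varphi_{x_i}$ forward along $f$, is sound.
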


\begin{corollary}
\label{cor:deg2_3trans}
There is an equivalence of categories between
the category of braided indecomposable racks of degree two
with surjective morphisms and
the category of $3$-transposition groups in $\mathcal{GC}$.
\end{corollary}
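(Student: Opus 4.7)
The strategy is to show that the equivalence of Proposition~\ref{prop:ecAG} restricts to the claimed one. By Lemma~\ref{lem:braided_is_faithful}, every indecomposable braided rack is faithful, so the category on the left-hand side of the claimed equivalence is a full subcategory of the source category in Proposition~\ref{prop:ecAG}. Similarly, the category of $3$-transposition groups in $\mathcal{GC}$ is a full subcategory of $\mathcal{GC}$. It therefore suffices to check that the equivalence restricts, i.e., that under the functor $X\mapsto (G,D)$ of Proposition~\ref{prop:ecAG}, the rack $X$ is braided of degree two if and only if $(G,D)$ is a $3$-transposition group.

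For the forward direction, suppose $X$ corresponds to $(G,D)\in\mathcal{GC}$ with $X\cong D$ as racks under conjugation. The condition that $X$ has degree two says $\varphi_x^2=\id$ for all $x\in X$, i.e., $x^2yx^{-2}=y$ for all $x,y\in D$, so $x^2$ centralizes $D$. Since $D$ generates $G$ and $Z(G)$ is trivial, this forces $x^2=1$, so every element of $D$ is an involution. For $x,y\in D$ with $x^2=y^2=1$, the identity $x\trid y=y$ is equivalent to $xy=yx$, i.e., $\mathrm{ord}(xy)\in\{1,2\}$, while $x\trid(y\trid x)=y$ unfolds to $xyx=yxy$, which (multiplying out) is equivalent to $(xy)^3=1$, i.e., $\mathrm{ord}(xy)\in\{1,3\}$. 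Hence the braided condition for $X$ is exactly the $3$-transposition condition $\mathrm{ord}(xy)\in\{1,2,3\}$ for all $x,y\in D$.

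The reverse direction is the same calculation read backwards: if $(G,D)\in\mathcal{GC}$ is a $3$-transposition group, then $D$ consists of involutions (so the corresponding rack satisfies $\varphi_x^2=\id$), and the $3$-transposition condition combined with the above two equivalences shows that for any $x,y\in D$ at least one of $x\trid y=y$ or $x\trid(y\trid x)=y$ holds, i.e., $D$ is a braided rack. Indecomposability holds because $G$, being generated by $D$, acts transitively on the conjugacy class $D$, so $\Inn(D)$ acts transitively. Since both functors restrict to the specified full subcategories, and Proposition~\ref{prop:ecAG} is already an equivalence, so is the restriction.

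The only mildly subtle point is translating ``degree two'' into the group-theoretic condition $x^2=1$; this depends crucially on the triviality of $Z(G)$ to pass from ``$x^2$ centralizes $D$'' to ``$x^2=1$''. Everything else is the standard dictionary between conjugation in a group and the associated rack operation. No classification of $3$-transposition groups is needed at this stage.
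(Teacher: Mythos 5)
Your proposal is correct and follows essentially the same route as the paper: both restrict the equivalence of Proposition~\ref{prop:ecAG} to full subcategories by translating ``degree two'' into ``$D$ consists of involutions'' and ``braided'' into ``$\mathrm{ord}(xy)\in\{1,2,3\}$ for all $x,y\in D$''. You merely spell out the dictionary (including the use of $Z(G)=1$ and Lemma~\ref{lem:braided_is_faithful}) that the paper's two-line proof leaves implicit.
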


\begin{proof}
 Let $\Gamma $ denote the equivalence in Proposition~\ref{prop:ecAG}.
 Then a rack $X$ has degree two if and only if $D$ consists of involutions,
 where $\Gamma (X)=(G,D)$. Further, $X$ is braided if and only if
 $\mathrm{ord}(xy)\in \{1,2,3\}$ for all $x,y\in D$.
\end{proof}

\begin{lemma} \label{lem:k3}
  Let $X,X'$ be finite indecomposable braided racks such that $X\subsetneq X'$.
  Then $k_3(X)<k_3(X')$.
\end{lemma}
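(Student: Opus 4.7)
The plan is to compare $k_3(X)$ and $k_3(X')$ at a single basepoint $x \in X \subseteq X'$ and to use the braided structure of $X'$ to reduce $k_3$ to a simple non-commutation count; strict inequality will then follow from the indecomposability of $X'$.

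Concretely, I would first fix $x \in X$. Since $X$ and $X'$ are both indecomposable, the integers $k_3(X)$ and $k_3(X')$ are independent of the choice of basepoint, so both can be computed at $x$. Because $X'$ is braided, every $y \in X'$ satisfies $x \trid y = y$ or $x \trid (y \trid x) = y$ with $x \trid y \ne y$, and the second alternative is exactly the condition defining $k_3$. Hence
\[
k_3(X') = \#\{y \in X' \mid x \trid y \ne y\}, \qquad k_3(X) = \#\{y \in X \mid x \trid y \ne y\},
\]
and from $X \subseteq X'$ I immediately obtain $k_3(X) \le k_3(X')$.

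For the strict inequality I would argue by contradiction. If $k_3(X) = k_3(X')$, then for the given $x$ and every $y \in X' \setminus X$ one has $x \trid y = y$, and Lemma \ref{lem:crossed} then forces $y \trid x = x$. Since the same argument works with any element of $X$ as basepoint (using basepoint-independence of $k_3$), in fact $\varphi_y|_X = \mathrm{id}_X$ for every $y \in X' \setminus X$, while $\varphi_z$ already stabilizes $X$ for $z \in X$ (as $X$ is a subrack) and fixes $X' \setminus X$ pointwise. Thus every generator $\varphi_z$ ($z \in X'$) of $\mathrm{Inn}(X')$ preserves the partition $X' = X \sqcup (X' \setminus X)$, so the $\mathrm{Inn}(X')$-orbit of $x$ lies in $X \ne X'$, contradicting the indecomposability of $X'$.

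The main step is noticing that, under the equality hypothesis, $X$ becomes $\mathrm{Inn}(X')$-stable, which is the clean obstruction to transitivity of the inner group on $X'$. The only subtle point is the use of basepoint-independence of $k_3$, which lets the identity $x \trid y = y$ be promoted from one fixed $x \in X$ to all $x \in X$ simultaneously; everything else is routine bookkeeping.
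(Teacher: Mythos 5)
Your proof is correct and follows essentially the same route as the paper: both identify $k_3$ with the count $\#\{y \mid x\trid y\ne y\}$ at a basepoint (using braidedness and basepoint-independence) and then invoke indecomposability of $X'$ to produce a pair $x\in X$, $y\in X'\setminus X$ with $x\trid y\ne y$. The paper states this existence directly and concludes in one line, whereas you obtain it by contradiction via Lemma~\ref{lem:crossed} and the stability of $X$ under $\Inn(X')$ --- a more explicit rendering of the same argument.
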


\begin{proof}
  Since $X'$ is indecomposable, there exist $x\in X$, $y\in X'\setminus X$
  such that $x\trid y\not=y$. Then
  $k_3(X')=\#\{z\in X'\,|\,x\trid z\not=z\}
   >\#\{z\in X\,|\,x\trid z\not=z\}=k_3(X)$.
\end{proof}

\begin{prop} \label{prop:deg2}
Let $X$ be a finite braided indecomposable rack of degree two with $k_3\le 6$.
Then $X$ is isomorphic to
one of the racks $\mathbb{D}_3$, $\mathcal{A}$ and $\mathcal{C}$.
\end{prop}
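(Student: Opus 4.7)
The plan is to translate the problem via Corollary~\ref{cor:deg2_3trans} to the classification of $3$-transposition groups, and then to apply Theorem~\ref{thm:3trans} together with the strict monotonicity of $k_3$ on proper subrack inclusions (Lemma~\ref{lem:k3}) to pin down $X$. Writing $(G,D)$ for the $3$-transposition group in $\mathcal{GC}$ attached to $X$ (so $D=X$), I fix a minimal rack-generating subset $Y\subseteq X$; by Lemma~\ref{lem:GYconn}, the graph $\mathcal{G}(Y)$ is connected. The case analysis is organized by $|Y|$ and by the combinatorics of $\mathcal{G}(Y)$, and the key principle throughout is that any proper subrack $X_0\subsetneq X$ with $k_3(X_0)\ge 6$ immediately contradicts $k_3(X)\le 6$ by Lemma~\ref{lem:k3}.

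If $|Y|\le 3$, Theorem~\ref{thm:3trans} forces $G$ to have the same central type as $F(1,1)$, $F(2,3)$, $F(3,6)$, or $F(3,9)$; the corresponding braided indecomposable racks of degree $2$ are respectively impossible (size one is excluded since $X$ has degree $2$), $\mathbb{D}_3$, $\mathcal{A}$, and $\mathrm{Aff}(\mathbb{F}_9,2)$. By Example~\ref{example:deg2_size9}, the last has $k_3=8>6$, so it is ruled out, leaving only $\mathbb{D}_3$ and $\mathcal{A}$.

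The core of the argument is the case $|Y|\ge 4$, where I rely on a combinatorial reduction of $\mathcal{G}(Y)$. For any three elements $\{a,b,c\}\subseteq Y$ inducing a connected subgraph, the subrack $\langle a,b,c\rangle$ is determined by Theorem~\ref{thm:3trans}. If such a triple is a triangle, then $\langle a,b,c\rangle\cong\mathrm{Aff}(\mathbb{F}_9,2)$ with $k_3=8$; by minimality of $Y$ this subrack is proper, so Lemma~\ref{lem:k3} yields $k_3(X)>8$, contradicting $k_3\le 6$. Hence $\mathcal{G}(Y)$ is triangle-free. Similarly, any four elements inducing $K_{1,3}$ generate a copy of the $F(4,12)$-rack of Example~\ref{example:deg2_size12}, already with $k_3=8$, again a contradiction. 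Thus $\mathcal{G}(Y)$ has maximum degree $\le 2$, so as a connected graph is either a path $P_n$ or a cycle $C_n$. The path $P_4$ generates $\mathcal{C}$ (with $k_3=6$), so the only path case compatible with $k_3\le 6$ is $|Y|=4$ with $\mathcal{G}(Y)=P_4$, forcing $X\cong\mathcal{C}$; longer paths and cycles $C_n$ with $n\ge 5$ both contain an induced $P_4$, yielding a proper $\mathcal{C}$-subrack and hence $k_3(X)>6$.

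The main obstacle is the remaining case $\mathcal{G}(Y)=C_4$: the underlying Coxeter group is of affine type $\tilde A_3$ and infinite, so one must show that no finite $3$-transposition quotient producing $C_4$ as the diagram of a minimal rack-generating set satisfies $k_3(X)\le 6$. I expect to handle this either by a direct computation inside the finite enveloping group $\overline{G_X}$ developed in Section~\ref{sec:BG}, exhibiting a proper subrack of $X$ of type $\mathrm{Aff}(\mathbb{F}_9,2)$, $\mathcal{C}$, or the $F(4,12)$-rack (each of which already pushes $k_3$ past $6$), or by invoking the Fischer--Hall classification of finite $3$-transposition groups to enumerate the relevant quotients and verify the $k_3$-inequality case by case.
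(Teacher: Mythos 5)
Your overall strategy is the same as the paper's: pass to $3$-transposition groups via Corollary~\ref{cor:deg2_3trans}, settle the case of at most three generators with Theorem~\ref{thm:3trans}, and for larger minimal generating sets use the strict monotonicity of $k_3$ under proper subrack inclusions (Lemma~\ref{lem:k3}) together with the subracks produced by triangles, stars and paths in $\mathcal{G}(Y)$; your treatment of $\#Y\ge 5$ via induced copies of $P_4$ is a clean refinement of the paper's one-line appeal to Lemma~\ref{lem:k3}. The decisive problem is that, as you yourself concede, the case $\mathcal{G}(Y)=C_4$ is not resolved: you name two possible strategies without carrying either out, so the proposal is not a complete proof. (The paper's own proof passes over $C_4$ in silence, going straight from the exclusion of triangles and of $K_{1,3}$ to ``hence $X\simeq\mathcal{C}$''; you have correctly located the thin spot, but locating it is not closing it. A possible route: three consecutive vertices of the square carry the diagram $P_3$, hence generate a proper subrack isomorphic to $\mathcal{A}$, so $k_3(X)>4$, and since $k_3$ is even for a rack of degree two this forces $k_3(X)=6$; one must then still exclude such an $X$, e.g.\ by consulting the Hall--Soicher list of $4$-generated $3$-transposition groups and noting that in $\SG 5$ every square of transpositions already lies in a subgroup $\SG 4$, so $C_4$ is never the diagram of a minimal generating set there.)

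A second, subtler issue, which your write-up shares with the paper, is the claim that a triangle in $\mathcal{G}(Y)$ forces $\langle a,b,c\rangle\cong\mathrm{Aff}(\mathbb{F}_9,2)$ and hence $k_3(X)>8$. Theorem~\ref{thm:3trans} only says that $\langle a,b,c\rangle$ has the central type of $F(2,3)$, $F(3,6)$ or $F(3,9)$, and a triangle can perfectly well generate a group of type $F(3,6)$: the transpositions $(1\,2),(1\,3),(1\,4)$ of $\SG 4$ pairwise have product of order three, form a minimal generating set, and generate the rack $\mathcal{A}$ with $k_3=4$. Indeed $\mathcal{C}$ itself admits the minimal generating set $\{(1\,2),(1\,3),(1\,4),(1\,5)\}$ with diagram $K_4$, so ``$\mathcal{G}(Y)$ contains a triangle'' cannot imply $k_3(X)>8$. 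Your case analysis therefore needs an extra branch: when the triangle generates an $F(3,6)$-type subgroup one only obtains a proper $\mathcal{A}$-subrack and $k_3(X)=6$, and one must then identify $X$ directly (it turns out to be $\mathcal{C}$, which is in the asserted list, so the proposition survives, but the argument as written does not establish this).
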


\begin{proof}
First assume that the rack $X$ is generated by at most three elements.
By Theorem~\ref{thm:3trans} and Corollary~\ref{cor:deg2_3trans} we only
have to check Examples~\ref{example:deg2_size3},
\ref{example:deg2_size6}, and \ref{example:deg2_size9}. In this case
$X\simeq \mathbb{D}_3$ or $X\simeq\mathcal{A}$.
Assume now that $X$ is generated by a subset $Y\subseteq X$
with $\#Y=4$. By Lemma~\ref{lem:GYconn}, the graph $\mathcal{G}(Y)$ is connected.
If $\mathcal{G}(Y)$ contains a triangle, then $k_3(X)>8$ by
Lemma~\ref{lem:k3} and Example~\ref{example:deg2_size9}.
If $\mathcal{G}(Y)$ is as in Example~\ref{example:deg2_size12} then
$k_3(X)>6$. Hence $X\simeq \mathcal{C}$ by Example~\ref{example:deg2_size10}.
Finally, if $X$ is generated by more than four elements, then $k_3(X)>6$ by
Lemma~\ref{lem:k3}.
\end{proof}

\begin{table*}
\begin{center}
\begin{tabular}{|c|c|c|c|c|}
\hline 
Rack 	& Diagram 		& Size & $k_3$ & Reference \\\hline
$\mathbb{D}_3$	& $(ab)$ 	& $3$  & $2$	 & Example \ref{example:deg2_size3}\\\hline
$\mathcal{A}$	& $(abc)$	& $6$	 & $4$& Example \ref{example:deg2_size6}\\\hline
$\mathrm{Aff}(9,2)$ & $(abca)$ & $9$  & $8$& Example \ref{example:deg2_size9}\\\hline
$\mathcal{C}$	& $(abcd)$	& $10$ & $6$& Example \ref{example:deg2_size10}\\\hline
\end{tabular}
\caption[aboveskip]{Some braided racks of degree two}
\label{tab:deg2_k3}
\end{center}
\end{table*}

\section{Braided racks of degree four}
\label{sec:deg4}

\begin{prop} \label{prop:deg4}
Let $X$ be a finite braided indecomposable rack of degree $4$ such that $k_3\le 6$.
Then $X$ is isomorphic to $\mathcal{B}$.
\end{prop}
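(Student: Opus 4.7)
The plan is to analyze the cycle structure of $\varphi_x$ acting on $X$. By Remark~\ref{rem:inner}, all $\varphi_x$ share the same cycle type; since $X$ is braided of degree $4$, every non-fixed orbit of $\varphi_x$ has length $2$ or $4$ with at least one $4$-cycle, and $k_3$ equals the total number of non-fixed elements. Combined with $k_3 \leq 6$, this leaves $k_3 \in \{4, 6\}$.

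To rule out $k_3 = 6$, write the non-fixed part of $\varphi_x$ as a $4$-cycle together with one $2$-cycle $(y,z)$. Lemma~\ref{lem:braided_cycle} makes $\{x,y,z\}$ into a $\mathbb{D}_3$-subrack, and uniqueness of the $2$-cycle of $\varphi_x$ forces this to be the only $\mathbb{D}_3$-subrack through $x$; the analogous triples for all elements therefore partition $X$. A sub-case analysis via Lemma~\ref{lem:braided_cycle} (of the same shape used below for $k_3=4$) shows that $a$ and $c=\varphi_x^2(a)$ commute and that the $\mathbb{D}_3$-partners of $a$ cannot lie in $\{b,x,d\}$ nor equal $y$ or $z$ (the latter by uniqueness of the triple $\{x,y,z\}$); they must therefore consist of a pair of extra fixed points $y_i, y_j$ of $\varphi_x$. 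Because $\varphi_x$ is a rack automorphism sending $a$ to $b$ and fixing every $y_i$, it maps the triple of $a$ to the triple of $b$ with $y_i, y_j$ unchanged, contradicting disjointness of the partition.

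For $k_3=4$, let $(a,b,c,d)$ be the $4$-cycle of $\varphi_x$; Lemma~\ref{lem:braided_cycle} places $b, x, d$ in the $4$-cycle of $\varphi_a$. Testing each possibility $a \trid c \in \{b,x,d\}$ against Lemma~\ref{lem:braided_cycle} produces an identity incompatible with the known actions (for instance, $\{a,b,c\}\simeq \mathbb{D}_3$ would force $a \trid b = c$, while $a \trid b = x$), so $a,c$ commute and, symmetrically, $b,d$ commute. A counting argument on commuting partners then shows that each $w \in \{a,b,c,d\}$ braids with exactly one of the extra fixed points of $\varphi_x$, so writing $n_i$ for the number of such $w$'s that a given fixed point $y_i$ braids with, we have $\sum n_i = 4$. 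A cyclic-order obstruction rules out $n_i = 1$: if $y_i$ braided with exactly one $w$, then $\varphi_{y_i}(w)$ would have to lie simultaneously in the $4$-cycle of $\varphi_{y_i}$ and in the braiding orbit $\{x, \varphi_x(w), \varphi_x^{-1}(w), y_i\}$ of $w$, whose intersection is empty. Therefore $n_i \in \{0,2,3,4\}$, leaving only the distributions $(4,0,\ldots,0)$ and $(2,2,0,\ldots,0)$.

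Both distributions are eliminated when $|X|>6$. In case $(4,0,\ldots,0)$, the unique $y_0$ braiding with all four $w$'s produces a subrack $\{x,y_0,a,b,c,d\}$, and a further application of Lemma~\ref{lem:braided_cycle} together with the $\mathbb{D}_3$-sub-case reasoning shows that the complementary fixed points $y_1, \ldots, y_{k_2-1}$ also form a subrack, contradicting the indecomposability of $X$. In case $(2,2,0,\ldots,0)$, splitting on whether the two distinguished $y$'s commute or braid produces in each scenario a pair of elements forced into a $2$-cycle of some $\varphi_{y_i}$, contradicting the absence of $2$-cycles from the prescribed cycle type. Hence $k_2=1$ and $|X|=6$; the cycle structures of the $\varphi_w$ together with the commutation pattern $a \leftrightarrow c$, $b \leftrightarrow d$, $x \leftrightarrow x^{*}$ determine every operation $u \trid v$, and comparison with the realization of $\mathcal{B}$ as the conjugacy class of $4$-cycles in $\SG 4$ yields $X \simeq \mathcal{B}$. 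The hardest part of the proof is establishing the cyclic-order obstruction $n_i \neq 1$ and the decomposition contradiction for $(4,0,\ldots,0)$, since both require a careful tracking of where specific elements land under the various $\varphi$-actions.
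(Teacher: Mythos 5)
Your opening move coincides with the paper's: $k_3$ counts the points moved by $\varphi_x$, so the cycle type of $\varphi_x$ is $(4)$ or $(2,4)$, and the two types are treated separately. From there the routes diverge. The paper disposes of both types by a concrete element chase: for type $(2,4)$ it sets $\varphi_1=(2\,3)(4\,5\,6\,7)$ and shows that $2\trid 4$ has no admissible value; for type $(4)$ it sets $\varphi_1=(2\,3\,4\,5)$, reads off $\varphi_2=(3\,1\,5\,\,2\trid 5)$ from Lemma~\ref{lem:braided_cycle}, excludes $2\trid5=4$ in two lines, and lets the single new element $6$ force $X\simeq\mathcal{B}$. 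Your treatment is more structural. The $\mathbb{D}_3$-triple partition of $X$ in type $(2,4)$, together with its equivariance under every $\varphi_g$ (from $\varphi_{g\trid w}=\varphi_g\varphi_w\varphi_g^{-1}$), is a genuinely different and quite clean way to kill that case, and I checked that the deferred sub-case analysis does go through: $c$ cannot lie in the $4$-cycle of $\varphi_a$ (that would create the $3$-cycle $(a\,c\,x)$ inside $\varphi_b$ --- this, rather than the claimed ``$a\trid b=c$'', is the actual obstruction) nor in its $2$-cycle (the triple of $a$ would then either meet $\{x,y,z\}$ or fail to map disjointly under $\varphi_x$). Your type $(4)$ analysis is also correct but considerably longer than the paper's four-line computation of $\varphi_2$.

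Two soft spots remain. First, several pivotal steps are asserted rather than carried out (``a sub-case analysis\dots shows'', ``a further application\dots shows''); they are all fillable, but as written the proof is a sketch exactly where the content lies. Second, and more substantively, your elimination of the distribution $(2,2,0,\dots,0)$ --- ``a pair of elements forced into a $2$-cycle of some $\varphi_{y_i}$'' --- is opaque, and I could not reconstruct an argument matching that description. Fortunately it is unnecessary: your own $n_i=1$ obstruction already gives more. If $y_i$ braids with $w\in\{a,b,c,d\}$, then $w\trid\varphi_x^{-1}(w)=y_i$ and Lemma~\ref{lem:braided_cycle} yield $\varphi_{y_i}(w)=\varphi_x^{-1}(w)$, which by injectivity of $\varphi_{y_i}$ is again a non-fixed point of $\varphi_{y_i}$; iterating shows that $y_i$ braids with all of $a,b,c,d$, so $n_i\in\{0,4\}$ and the case $(2,2,0,\dots,0)$ never occurs. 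With that substitution, and the deferred verifications written out, your proof is complete.
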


\begin{proof}
Let $1,2,\dots ,\#X$ denote the elements of $X$.
Since $k_{3}$
is the number of moved points of the permutation $\varphi_{1}$, the
type of $\varphi_{1}$ is 
$(2,4)$ or $(4)$.

\textit{Type $(2,4)$.}
Without loss of generality we may assume that 
\[
\varphi_{1}=(2\,3)(4\,5\,6\,7).
\]
Lemma \ref{lem:braided_cycle} implies that $\varphi_{2}=(1\,3)\pi_{2}$, where $\pi_{2}$
is a $4$-cycle that commutes with $(1\,3)$. Similarly, $\varphi_{3}=(1\,2)\pi_{3}$,
where $\pi_{3}$ is a $4$-cycle that commutes with $(1\,2)$. We prove that $2\trid 4\notin \{4,5,6,7,8\}$
which is a contradiction.

Assume that $2\trid4=4$.
Then $1\trid(2\trid5)=\varphi_{2}\varphi_{1}(2\trid4)=\varphi_{2}\varphi_{1}(4)=2\trid5$.
Let $8=2\trid5$ be this new element that commutes with
$1$. Then $8=\varphi_{1}^{2}(2\trid5)=2\trid7$,
which is a contradiction. 

Assume that $2\trid4=5$. Then $4\trid 5=1$ and $4\trid 5=2$ by Lemma~\ref{lem:braided_cycle}
which is a contradiction.

Assume that $2\trid4=6$.
Then $2\trid6=\varphi_{1}^{2}(2\trid4)=\varphi_{1}^{2}(6)=4$,
which contradicts the type of $\varphi_{2}$. 

Assume that $2\trid4=7$. Then $2\trid 6=\varphi _1^2(2\trid 4)=\varphi_1^2(7)=5$.
we obtain that $\varphi_{2}=(1\,3)(4\,7\,6\,5)$ and $\varphi_{3}=(1\,2)(5\,4\,7\,6)$.
Then $2\trid 7=6$ implies that $6\trid2=7$
and $3\trid 7=6$ implies that $6\trid3=7$,
a contradiction.

Assume that $2\trid4=8$.
Then $8=\varphi_{1}^{2}(8)=\varphi_{1}^{2}(2\trid4)=2\trid6$,
which is a contradiction.

\textit{Type $(4)$.}
Without loss of generality we may assume that 
\[
\varphi_{1}=(2\,3\,4\,5).
\]
Then $1\trid 5=2$, $5\trid 2=1$, and hence $5$ and $2$ do not commute.
Let $x=2\trid5$. Then Lemma \ref{lem:braided_cycle} implies that 
$\varphi_{2}=(3\,1\,5\, x)$, $\varphi_{3}=(4\,1\,2\,\varphi_{1}(x))$,
$\varphi_{4}=(5\,1\,3\,\varphi_{1}^{2}(x))$ and $\varphi_{5}=(2\,1\,4\,\varphi_{1}^{3}(x))$.

Assume that $2\trid5=4$. Then $3\trid 2=\varphi_{1}(2\trid 5)=\varphi _1(4)=5$
and hence $2\trid 5=3$, a contradiction.
Therefore $2\trid5=6$ and hence $\varphi_{2}=(3\,1\,5\,6)$, $\varphi_{3}=(4\,1\,2\,6)$,
$\varphi_{4}=(5\,1\,3\,6)$, $\varphi_{5}=(2\,1\,4\,6)$ and $\varphi_{6}=(2\,5\,4\,3)$.
Therefore $X\simeq\mathcal{B}$, the rack associated to the conjugacy
class of $4$-cycles in $\SG 4$.
\end{proof}

\section{Braided racks of degree three or six}
\label{sec:deg36}

\begin{prop} \label{prop:deg3}
Let $X$ be a finite braided indecomposable rack of degree $3$ such that $k_3\le 6$.
Then $X$ is isomorphic to the rack $\mathcal{T}$. 
\end{prop}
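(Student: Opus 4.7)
The plan is to mimic the proof of Proposition~\ref{prop:deg4}. Since $X$ has degree $3$, the permutation $\varphi_1$ has order exactly $3$, so it is a product of disjoint $3$-cycles, and $k_3$ is the total number of its moved points. Since $X$ is indecomposable and of degree $3$ it has at least $4$ elements, so $k_3\ge 3$; combined with $k_3\le 6$ this leaves the two cycle types $(3)$ and $(3,3)$ for $\varphi_1$. I treat these two possibilities in turn.

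For type $(3)$, I may assume $\varphi_1=(2\,3\,4)$. From $1\trid2=3$, $1\trid3=4$, $1\trid4=2$, Lemma~\ref{lem:braided_cycle} yields $\varphi_2(3)=1$ and $\varphi_2(1)=4$; since $\varphi_2$ has order $3$ and satisfies $\varphi_2(2)=2$, it must equal $(1\,4\,3)$ on $\{1,2,3,4\}$. Applying the same argument with $2$ replaced by $3$ and $4$ gives $\varphi_3=(1\,2\,4)$ and $\varphi_4=(1\,3\,2)$, which are precisely the inner automorphisms of $\mathcal{T}$. To rule out further elements, suppose $y\in X\setminus\{1,2,3,4\}$. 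Then $\varphi_i(y)=y$ for $i\in\{1,2,3,4\}$, and Lemma~\ref{lem:crossed}(1) forces $\varphi_y$ to fix each of $1,2,3,4$ in turn. Hence $\{1,2,3,4\}$ is invariant under every $\varphi_z$, $z\in X$, and Lemma~\ref{lem:rack_indecomposable} contradicts the indecomposability of $X$. Thus $X\simeq\mathcal{T}$.

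For type $(3,3)$, I take $\varphi_1=(2\,3\,4)(5\,6\,7)$ and derive a contradiction. Exactly as in the previous case, Lemma~\ref{lem:braided_cycle} forces $\varphi_2|_{\{1,3,4\}}=(1\,4\,3)$, $\varphi_3|_{\{1,2,4\}}=(1\,2\,4)$, $\varphi_4|_{\{1,2,3\}}=(1\,3\,2)$, $\varphi_5|_{\{1,6,7\}}=(1\,7\,6)$, and the analogues for $\varphi_6,\varphi_7$, so both $\{1,2,3,4\}$ and $\{1,5,6,7\}$ are subracks isomorphic to $\mathcal{T}$. The only remaining freedom is the action of $\varphi_2$ on $\{5,6,7\}$ (and, symmetrically, of $\varphi_5$ on $\{2,3,4\}$). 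Since $\varphi_2$ has order $3$, this restriction is either the identity, one of the two $3$-cycles $(5\,6\,7)$, $(5\,7\,6)$, or moves at least one of $5,6,7$ into a previously unseen element. In every subcase I test the rack identity $\varphi_{2\trid 5}=\varphi_2\varphi_5\varphi_2^{-1}$: for example, if $\varphi_2$ fixes $\{5,6,7\}$ pointwise then $2\trid 5=5$, so the identity demands $\varphi_5=\varphi_2\varphi_5\varphi_2^{-1}$, but a direct computation shows that $\varphi_2\varphi_5\varphi_2^{-1}=(4\,7\,6)\ne(1\,7\,6)=\varphi_5$. Analogous discrepancies arise in the remaining subcases, sometimes instead forcing the failure of bijectivity of $\varphi_2$ or of the braided condition via Lemma~\ref{lem:braided_cycle}.

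The main obstacle is this systematic case analysis for type $(3,3)$, which parallels the treatment of type $(2,4)$ in Proposition~\ref{prop:deg4}. The recurring mechanism is that the two $\mathcal{T}$-subracks $\{1,2,3,4\}$ and $\{1,5,6,7\}$ share only the element $1$, and any nontrivial mixing between $\{2,3,4\}$ and $\{5,6,7\}$ imposed by $\varphi_2$ is incompatible with the rack axiom $\varphi_{i\trid j}=\varphi_i\varphi_j\varphi_i^{-1}$ and Lemma~\ref{lem:crossed}. Once type $(3,3)$ is excluded, the type $(3)$ analysis above yields $X\simeq\mathcal{T}$.
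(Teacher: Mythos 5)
Your argument is correct and follows essentially the same route as the paper: reduce to cycle types $(3)$ and $(3,3)$ of $\varphi_1$ via $k_3$, reconstruct $\varphi_2,\varphi_3,\varphi_4$ with Lemma~\ref{lem:braided_cycle} to obtain $\mathcal{T}$ in the first case, and refute the second by a finite case analysis on the second $3$-cycle of $\varphi_2$ using Lemma~\ref{lem:braided_cycle} and the conjugation identity \eqref{eq:conjugation}. The only difference is bookkeeping: the paper first rules out $2\trid 5\in\{6,7\}$, normalizes to $2\trid 5=5$ by cyclically relabelling $\{5,6,7\}$, and then walks forward to pin down $\varphi_2$, whereas you enumerate the possible restrictions of $\varphi_2$ to $\{5,6,7\}$ directly; the subcases you leave as ``analogous'' do all close by the mechanisms you name (comparing the two sides of $\varphi_{2\trid u}=\varphi_2\varphi_u\varphi_2^{-1}$ at a point of the known $3$-cycle of $\varphi_u$ for a $u\in\{5,6,7\}$ fixed by $\varphi_2$, or applying Lemma~\ref{lem:braided_cycle} when $\varphi_2$ moves one element of $\{5,6,7\}$ to another).
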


\begin{proof}
Let $1,2,\dots ,\#X$ denote the elements of $X$.
Since $k_{3}$ is the number of moved points of the permutation $\varphi_{1}$, the
type of $\varphi_{1}$ is 
$(3)$ or $(3,3)$.

\textit{Type $(3)$.}
Without loss of generality we may assume that 
\[
\varphi_{1}=(2\,3\,4).
\]
Lemma \ref{lem:braided_cycle} implies that 
$\varphi_2=(3\,1\,4)$, $\varphi_3=(4\,1\,2)$ and $\varphi_4=(1\,3\,2)$.
Then $X\simeq\mathcal{T}$.

\textit{Type $(3,3)$.}
Without loss of generality we may assume that
\[
\varphi_{1}=(2\,3\,4)(5\,6\,7).
\]
Lemma \ref{lem:braided_cycle} implies that
$\varphi_2$ contains the $3$-cycle $(3\,1\,4)$, $\varphi_5$ contains the
$3$-cycle $(6\,1\,7)$ and $\varphi_7$ contains the $3$-cycle $(1\,6\,5)$.

If $\varphi _2$ contains the $2$-cycle $(5\,6\,7)$ or $(5\,7\,6)$ then
$2\trid5\in\{6,7\}$. However, $2\trid5=6$ and Lemma \ref{lem:braided_cycle}
imply that $1=5\trid6=2$, a contradiction. Similarly, $2\trid5=7$ and Lemma
\ref{lem:braided_cycle} imply that $6=5\trid7=2$, a contradiction. 

Without loss of generality we may assume that $2\trid5=5$. Apply the 
permutation $\varphi_2\varphi_1$ to
$2\trid5=5$ to obtain $1\trid(2\trid6)=2\trid6$. We may assume that $8=2\trid6$
and that $2\trid 8\in \{7,9\}$.

Assume that $2\trid8=9$.
Applying $\varphi_2\varphi_1$
we obtain that $1\trid(2\trid8)=2\trid9$, that is, $9=2\trid 9$.
This is a contradiction to $2\trid 8=9$. 

We have proved that $2\trid 8=7$ and hence $\varphi_2=(3\,1\,4)(6\,8\,7)$.
Since $2\trid7=6$, Lemma \ref{lem:braided_cycle} implies that 
$5=7\trid6=2$, a contradiction.
\end{proof}

\begin{prop} \label{prop:deg6}
Let $X$ be a finite braided indecomposable rack of degree $6$ such that $k_3\le 6$.
Then $X$ is isomorphic to one of the racks $\mathrm{Aff}(7,3)$, $\mathrm{Aff}(7,5)$. 
\end{prop}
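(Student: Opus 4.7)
The plan is to mimic the case analysis of Propositions~\ref{prop:deg3} and \ref{prop:deg4} on the cycle type of $\varphi_1$. Since $X$ has degree $6$ and is braided, $\varphi_1$ has order $6$ and, because for a braided rack $k_3$ counts the moved points of $\varphi_1$, it moves at most $6$ elements. The only cycle types of a permutation of order $6$ with at most six moved points are $(6)$ and $(2,3)$.

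For type $(2,3)$, take $\varphi_1=(2\,3)(4\,5\,6)$. Iterating the conjugation identity $\varphi_{i\trid j}=\varphi_i\varphi_j\varphi_i^{-1}$ along the orbit $4\to 5\to 6\to 4$ of $1$ yields $\varphi_4=\varphi_1^3\varphi_4\varphi_1^{-3}$, so $\varphi_1^3=(2\,3)$ commutes with $\varphi_4$. Hence $\varphi_4$ either fixes both $2$ and $3$ or swaps them; the swap contradicts Lemma~\ref{lem:braided_cycle} applied to $4\trid 2=3$ (which would force $2\trid 3=4$, but $2\trid 3=1$). So $\varphi_4$ fixes $\{2,3\}$; together with Lemma~\ref{lem:braided_cycle} applied to $1\trid 4=5$ and $1\trid 6=4$ this identifies the $3$-cycle of $\varphi_4$ as $(5\,1\,6)$ and forces its $2$-cycle $(y\,z)$ to be disjoint from $\{1,\dots,6\}$. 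Conjugation by $\varphi_1$ then shows $\varphi_5$ and $\varphi_6$ carry the same $2$-cycle $(y\,z)$. Hence $4\trid y=z$ and $5\trid y=z$, and Lemma~\ref{lem:braided_cycle} yields $y\trid z=4$ as well as $y\trid z=5$, a contradiction.

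For type $(6)$, take $\varphi_1=(2\,3\,4\,5\,6\,7)$. Lemma~\ref{lem:braided_cycle} applied to the relations $1\trid k=\varphi_1(k)$ gives $\varphi_2(3)=1$, $\varphi_2(1)=7$, and analogous data for $\varphi_3,\dots,\varphi_7$. The identity $2\trid 1=7=\varphi_1^5(2)$ forces $\varphi_2\varphi_1\varphi_2^{-1}=\varphi_7=\varphi_1^5\varphi_2\varphi_1^{-5}$, which using $\varphi_1^6=\id$ simplifies to the braid relation
\begin{equation*}
  \varphi_1\varphi_2\varphi_1=\varphi_2\varphi_1\varphi_2.
\end{equation*}
Evaluating it at $3$ yields $\varphi_2(4)=6$. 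Since $\varphi_2$ is a $6$-cycle by Remark~\ref{rem:inner}, its support has the form $\{1,3,4,6,7,x\}$ for some $x\notin\{1,2,3,4,6,7\}$, and the cycle is of the shape $(3,1,7,4,6,x)$ or $(3,1,7,x,4,6)$. Evaluating the braid relation at $4$ then forces $x=5$ in each case, so $\varphi_2\in\{(3\,1\,7\,4\,6\,5),(3\,1\,7\,5\,4\,6)\}$. Since $\varphi_2$ and each of its conjugates $\varphi_3,\dots,\varphi_7$ fix $X\setminus\{1,\dots,7\}$ pointwise, indecomposability forces $|X|=7$. A direct relabeling against the explicit permutations $\varphi_0$ in $\mathrm{Aff}(\mathbb{F}_7,3)$ and $\mathrm{Aff}(\mathbb{F}_7,5)$ identifies the two cases with these two affine racks respectively.

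The main obstacle is the bookkeeping in the $(6)$ case: the braid relation does not immediately pin down $\varphi_2$, and one must combine it with the data from Lemma~\ref{lem:braided_cycle} and a short case split on where $4$ sits in the cycle. The $(2,3)$ case is shorter but hinges on the non-obvious observation that $\varphi_1^3$ commutes with $\varphi_4$, which is what controls the $2$-cycle of $\varphi_4$ and delivers the contradiction from the shared $2$-cycles of $\varphi_4,\varphi_5,\varphi_6$.
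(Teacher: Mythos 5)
Your proof is correct. The skeleton---splitting on whether $\varphi_1$ has cycle type $(2,3)$ or $(6)$, which is forced since $k_3$ counts the moved points of $\varphi_1$---is the same as the paper's, but your tactics inside each case are genuinely different. In type $(2,3)$ the paper eliminates the possible values of $2\trid 4$ one at a time to arrive at $\varphi_2=(1\,3)(7\,8\,9)$ and then derives a contradiction from $2\trid 8$; you instead conjugate $\varphi_4$ around the $3$-cycle of $\varphi_1$ to force $\varphi_1^3=(2\,3)$ to commute with $\varphi_4$, pin down $\varphi_4=(5\,1\,6)(y\,z)$ with $y,z\notin\{1,\dots,6\}$, and get a clean contradiction from $\varphi_4$ and $\varphi_5$ sharing the $2$-cycle $(y\,z)$, so that Lemma~\ref{lem:braided_cycle} would give $y\trid z=4$ and $y\trid z=5$ simultaneously. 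In type $(6)$ the paper obtains $2\trid 4=6$ from the self-distributivity computation $7=2\trid(3\trid 4)=1\trid(2\trid 4)$ and rules out $2\trid 5=5$ directly; you derive the braid relation $\varphi_1\varphi_2\varphi_1=\varphi_2\varphi_1\varphi_2$ from the two expressions for $\varphi_7$ and evaluate it at $3$ and $4$, arriving at the same two candidates $(3\,1\,7\,4\,6\,5)$ and $(3\,1\,7\,5\,4\,6)$ for $\varphi_2$. Your braid-relation device is the more systematic of the two and your $(2,3)$ argument is arguably slicker; the paper's eliminations are more elementary but longer. The one step you leave at the same minimal level of detail as the paper is the deduction $\#X=7$ from indecomposability: to apply Lemma~\ref{lem:rack_indecomposable} one should note that any $j\notin\{1,\dots,7\}$ satisfies $i\trid j=j$ for all $i\le 7$, hence $j\trid i=i$ by Lemma~\ref{lem:crossed}(1), so $\{1,\dots,7\}$ is stable under all of $X$. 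This is not a gap, just a point worth spelling out.
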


\begin{proof}
Let $1,2,\dots ,\#X$ denote the elements of $X$.
Since $k_{3}$ is the number of moved points of the permutation $\varphi_{1}$, the
type of $\varphi_{1}$ is 
$(2,3)$ or $(6)$.

\textit{Type $(2,3)$.}
\label{type23}
Without loss of generality we may assume that 
\[
\varphi_{1}=(2\,3)(4\,5\,6).
\]
Lemma \ref{lem:braided_cycle} implies that
$\varphi_2$ contains the transposition $(1\,3)$ and 
$\varphi_4$ contains the $3$-cycle $(1\,6\,5)$.

First we show that $2\trid4=4$.
Indeed, the possible values for $2\trid4$ are $4$, $5$, $6$ and $7$.
The case $2\trid4=7$ is excluded by the
formula $\varphi_1^2(2\trid4)=2\trid6$. The case $2\trid4=5$ contradicts 
Lemma \ref{lem:braided_cycle} since $1\trid4=5$. If $2\trid4=6$, 
then Lemma \ref{lem:braided_cycle}
implies that $2=4\trid6=5$, a contradiction.

Since $2\trid4=4$, we obtain that
$2\trid5=\varphi_1^4(2\trid4)=\varphi_1^4(4)=5$ and
$2\trid6=\varphi_1^2(2\trid4)=\varphi_1^2(4)=6$.
Since the permutation 
$\varphi_2$ is of type $(2,3)$, we may assume that $\varphi _2=(1\,3)(7\,8\,9)$. 
Then 
$8=1\trid8=\varphi_2\varphi_1(2\trid7)=\varphi_2\varphi_1(8)=2\trid8$, which is a contradiction.

\textit{Type $(6)$.}
Without loss of generality we may assume that 
\[
\varphi_{1}=(2\,3\,4\,5\,6\,7).
\]
Lemma \ref{lem:braided_cycle} implies that 
$\varphi_2=(3\,1\,7\,\cdots )$, $\varphi_3=(4\,1\,2\,\cdots)$,
$\varphi_4=(5\,1\,3\,\cdots)$, $\varphi_5=(6\,1\,4\,\cdots)$, 
$\varphi_6=(7\,1\,5\,\cdots)$ and 
$\varphi_7=(2\,1\,6\,\cdots)$. Since
\[
7 = 2\trid1 = 2\trid(3\trid4) = (2\trid3)\trid(2\trid4) = 1\trid(2\trid4),
\]
it follows that $2\trid4 = 6$. Moreover, $2\trid5\ne5$. Indeed, otherwise
\[
7 = 2\trid1 = 2\trid(4\trid5) = (2\trid4)\trid(2\trid5) = 6\trid5\ne7,
\]
a contradiction. Therefore $\varphi_2\in\{(3\,1\,7\,4\,6\,5), (3\,1\,7\,5\,4\,6)\}$. 
By conjugation with $\varphi_1$ one obtains all permutations $\varphi_i$ 
with $i\in\{3, 4, 5, 6, 7\}$. Since $X$ is indecomposable, we conclude that $\#X=7$.

Assume that $\varphi_2 = (3\,1\,7\,4\,6\,5)$.
Then $\varphi_3 = (4\,1\, 2\, 5\, 7\, 6)$, $\varphi_4=(5\, 1\, 3\, 6\, 2\, 7)$,
$\varphi_5=(6\, 1\, 4\, 7\, 3\, 2)$, $\varphi_6 = (7\, 1\, 5\, 2\, 4\, 3)$ 
and $\varphi_7 = (2\, 1\, 6\, 3\, 5\, 4)$. This rack is isomorphic to the
affine rack $\mathrm{Aff}(7, 5)$.
On the other hand, if $\varphi_2 = (3\, 1\, 7\, 5\, 4\, 6)$,
then $\varphi_3 = (4\, 1\, 2\, 6\, 5\, 7)$, $\varphi_4 = (5\, 1\, 3\, 7\, 6\, 2)$,
$\varphi_5 = (6\, 1\, 4\, 2\, 7\, 3)$, $\varphi_6 = (7\, 1\, 5\, 3\, 2\, 4)$ and 
$\varphi_7 = (2\, 1\, 6\, 4\, 3\, 5)$. This rack is isomorphic to the
affine rack $\mathrm{Aff}(7, 3)$.
\end{proof}

\section{The proof of Theorem~\ref{thm:main}}
\label{sec:proof}

In this section we prove Theorem~\ref{thm:main}(3)$\Rightarrow $(4). If $\#X=1$,
then $G$ is cyclic.
Hence $\#X>1$. Since $X$ is indecomposable, Proposition~\ref{pro:degree} implies that
the degree of $X$ is $2$, $3$, $4$, or $6$. Further, $k_3\le 6$ by Proposition~\ref{pro:gen_ineq}
and $k_3\le 3$ if the degree of $\rho $ is at least $2$.
By Propositions~\ref{prop:deg2}, \ref{prop:deg4}, \ref{prop:deg3} and \ref{prop:deg6}
we only have to take care about the racks $X=\mathbb{D}_3$, $\mathcal{T}$, $\mathcal{A}$, $\mathcal{B}$,
$\mathcal{C}$, $\mathrm{Aff}(7,3)$ and $\mathrm{Aff}(7,5)$. Each of these racks is considered in a separate subsection.
Since $G$ is generated by $X$, there is an epimorphism $G_X\to G$. Thus we may assume that $G=G_X$.
The elements of $X$ and their image in $G_X$ will be denoted by $1,2,\ldots ,\#X$ and $x_1,x_2,\ldots ,x_{\#X}$,
respectively. Since any braided rack is faithful, the elements $x_1,\dots x_{\#X}$ are pairwise distinct.

During the proof some known and some new finite-dimensional Nichols algebras will appear.
The Hilbert series of these algebras are collected in Table~\ref{tab:nichols}. The formulas
for the known examples are taken from \cite[Table\,1]{Marburg}.


\subsection{The rack $\mathbb{D}_3$}
\label{sec:D3}
Let $X=\{1,2,3\}=\mathbb{D}_3$. The size of $X$ is $d=3$.
The rack structure of $X$ is uniquely determined by $\varphi _1=(2\,3)$.

\begin{lemma} \cite[Lemma 5.2]{Marburg}
\label{lem:b3_centralizer}
The centralizer of $x_1$ in $G_X$ is the cyclic group generated by $x_1$.
\end{lemma}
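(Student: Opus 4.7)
The plan is to show that $G_X$ is a central extension of $\SG 3$ and to transfer the centralizer computation from $\SG 3$ back along this extension.

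First I would verify that $z := x_1^2$ lies in $Z(G_X)$ and that $x_1^2 = x_2^2 = x_3^2$. From $x_1 x_2 = x_3 x_1$ one gets $x_3 = x_1 x_2 x_1^{-1}$; substituting this into $x_1 x_3 = x_2 x_1$ yields $x_1^2 x_2 = x_2 x_1^2$, so $z$ commutes with $x_2$. The symmetric argument using the relations $x_3 x_1 = x_2 x_3$ and $x_3 x_2 = x_1 x_3$ shows that $z$ commutes with $x_3$, hence $z \in Z(G_X)$. Since $x_2 x_3 = x_1 x_2$ gives $x_3 = x_2^{-1} x_1 x_2$, one has $x_3^2 = x_2^{-1} z x_2 = z$ by centrality, and $x_2^2 = z$ follows analogously.

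Next I would identify $\overline{G_X} = G_X/\langle z\rangle$ with $\SG 3$. Killing $z$ makes $x_1^2 = x_2^2 = 1$; moreover the rack relation $x_2 x_3 = x_1 x_2$ combined with $x_3 = x_1 x_2 x_1^{-1}$ yields the braid relation $x_1 x_2 x_1 = x_2 x_1 x_2$. Since $x_3$ is expressible in $x_1, x_2$, the quotient is a quotient of the Coxeter group of type $A_2$, namely $\SG 3$. On the other hand, sending each $x_i$ to the corresponding transposition defines a surjection $\pi : G_X \to \SG 3$ factoring through $\overline{G_X}$, and comparing orders gives $\overline{G_X} \cong \SG 3$.

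Finally, for $g \in C_{G_X}(x_1)$, the image $\pi(g)$ centralizes the transposition $\pi(x_1)$ in $\SG 3$, so $\pi(g) \in \{1, \pi(x_1)\}$. Consequently $g \in \langle x_1 \rangle \cdot \ker\pi = \langle x_1 \rangle \cdot \langle z\rangle = \langle x_1 \rangle$, using $z = x_1^2$; the reverse inclusion is clear. The main obstacle I anticipate is the first step: extracting from the six rack relations the fact that the three squares $x_i^2$ coincide and lie in the center. Once this is in place, the identification with $\SG 3$ and the final centralizer argument are routine.
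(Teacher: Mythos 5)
The paper does not prove this statement itself; it quotes it from \cite[Lemma 5.2]{Marburg}. Your argument is correct and is essentially the expected one: show that $z=x_1^2=x_2^2=x_3^2$ is central, identify $G_X/\langle z\rangle$ with the finite enveloping group $\overline{G_X}\simeq\SG 3$, and pull the centralizer of a transposition back through the central extension, using $\ker\pi=\langle z\rangle\subseteq\langle x_1\rangle$. All the individual computations check out. One cosmetic slip: the ``symmetric argument'' applied to the pair of relations $x_3x_1=x_2x_3$ and $x_3x_2=x_1x_3$ yields $x_3^2x_1=x_1x_3^2$, not the commutation of $z$ with $x_3$ that you need at that point. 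This is harmless: either use the pair $x_1x_3=x_2x_1$ and $x_1x_2=x_3x_1$, which gives $x_1^2x_3=x_3x_1^2$ directly, or simply note that $z$ commutes with the generators $x_1$ and $x_2$ and that $x_3=x_1x_2x_1^{-1}$, so $z$ is automatically central. With that adjustment the proof is complete.
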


\begin{prop}
\label{prop:b3}
Let $\rho$ be an absolutely irreducible representation of $C_{G_X}(x_1)$ and let
$V=M(x_1,\rho)$. Then $\toba(V)$ has many cubic relations if and only if
$\rho(x_1)=-1$ or $\charf=2$, $\rho(x_1)^2+\rho(x_1)+1=0$.
\end{prop}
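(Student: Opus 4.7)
The plan is to combine the orbit-by-orbit kernel estimates of the preceding sections. Since $C_{G_X}(x_1)=\langle x_1\rangle$ is cyclic by Lemma~\ref{lem:b3_centralizer}, any absolutely irreducible representation $\rho$ is one-dimensional and determined by $q=\rho(x_1)\in\fie^\times$; in particular $e=\dim V_{x_1}=1$ and $\dim V=3$. For $X=\mathbb{D}_3$ one computes $d=3$, $k_2=0$, $k_3=2$ and $m=t=0$, so Proposition~\ref{pro:sizes} gives $\nro{1}=\nro{8}=3$ and all other $\nro{j}$ vanish. Via \eqref{eq:dimker} the many-cubic-relations condition thus reduces to asking whether the sum of contributions from the three $1$-orbits and three $8$-orbits reaches $\tfrac13\cdot 3\cdot (9-1)=8$.

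The key auxiliary fact I would establish is that $c^3(v_i\otimes v_j)=q^3\,v_i\otimes v_j$ for all distinct $i,j\in X$, where $v_k\in V_{x_k}\setminus\{0\}$. To see this I would choose coset representatives $g_1=1$, $g_2=x_3$, $g_3=x_2$, so that $g_kx_1g_k^{-1}=x_k$, and set $v_k=g_k\otimes v$. Using the defining relations $x_ix_j=x_{i\trid j}x_i$ of $G_X$ together with the identity $x_1^2=x_2^2=x_3^2$ (which follows because $x_j^2$ commutes with $x_1$, hence lies in $\langle x_1\rangle$, and a degree count in $G_X^{\mathrm{ab}}\simeq\ndZ$ forces the exponent to be $2$), a direct calculation gives $x_1v_2=qv_3$, $x_3v_1=v_2$, $x_2v_3=q^2v_1$, and hence $c^3(v_1\otimes v_2)=q^3\,v_1\otimes v_2$; the other pairs are analogous. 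Consequently $(1+c^3)(v_i\otimes v_j)=0$ if and only if $q^3=-1$.

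With this in hand I would assemble the totals. Proposition~\ref{pro:orbit1} contributes $1$ per $1$-orbit exactly when $\charf \ne 3$ and $1+q+q^2=0$, or $\charf =3$ and $q=1$, and $0$ otherwise. Proposition~\ref{pro:orbit8a} yields $\dim=3$ per $8$-orbit precisely when $q=-1$, while Proposition~\ref{pro:orbit8b} gives $\dim\le 2$ per $8$-orbit when $q\ne -1$, with equality forcing $q^3=-1$. Running through the possible values of $q$, the grand total reaches $8$ only in the two cases $q=-1$ (giving $0+3\cdot 3=9$) and $\charf =2$ with $q^2+q+1=0$ (giving $3\cdot 1+3\cdot 2=9$); in every other case (including $q^2-q+1=0$ with $\charf \ne 2$, and $\charf =3$ with $q=1$) the total is at most $6$.

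The main obstacle is verifying that the bound $\dim=2$ is actually attained on each $8$-orbit in the case $\charf =2$, $q^2+q+1=0$, since Proposition~\ref{pro:orbit8b} supplies only an inequality. For this I would reopen the linear system in the proof of Proposition~\ref{pro:orbit8b}: when $q^3=-1$, the auxiliary fact above implies that $1+c_{12}^3$ vanishes identically on $V_{x_i}\otimes V_{x_j}\otimes V_{x_k}$ for $i\ne j$, so Equation~\eqref{eq:dim2} becomes trivially satisfied and $\tau_{zxz}$, $\tau_{zzw}$ may be chosen independently, realizing $\dim=2$. This completes both directions of the equivalence.
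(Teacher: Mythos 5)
Your necessity argument is sound and matches the paper's in substance: the paper runs the same orbit count (three $1$-orbits, three $8$-orbits, nothing else) through Proposition~\ref{prop:verygi} and the same per-orbit bounds from Propositions~\ref{pro:orbit1}, \ref{pro:orbit8}, \ref{pro:orbit8a} and \ref{pro:orbit8b}, and lands on the same two cases; your auxiliary computation $c^3=q^3\,\id$ on $V_{x_i}\otimes V_{x_j}$ for $i\ne j$ is correct and is exactly what is needed to turn the condition $(1+c^3)(v_x\otimes v_y)=0$ into $q^3=-1$. Where you genuinely depart from the paper is in the sufficiency direction: the paper never computes $\dim\ker(1+c_{12}+c_{12}c_{23})$ there at all, but instead quotes the factorized Hilbert series of $\toba(V)$ (known for $\rho(x_1)=-1$, and established for the new $\charf=2$ example in Proposition~\ref{prop:newD3} by a Gr\"obner basis computation) and applies Theorem~\ref{thm:main}(4)$\Rightarrow$(3). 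Your route is more elementary and self-contained, but it is precisely there that you have a gap.

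The gap is in the claim that $\rdimker=2$ is \emph{attained} when $\charf=2$ and $q^2+q+1=0$. In the system \eqref{eq:1}--\eqref{eq:8} from the proof of Proposition~\ref{pro:orbit8}, the first five equations eliminate five of the eight components, and Equation~\eqref{eq:6} (for $q\ne-1$, $e=1$) determines $\tau_{xxy}$ from $\tau_{zxz},\tau_{zzw}$. That leaves \emph{two} residual constraints on the two free parameters: Equation~\eqref{eq:7}, which you dispose of via \eqref{eq:dim2} and the vanishing of $1+c_{12}^3$, and Equation~\eqref{eq:8}, which you never address. Proposition~\ref{pro:orbit8b} only extracts a consequence of \eqref{eq:7} because it is proving an upper bound; for your lower bound you must also show that \eqref{eq:8} becomes $0=0$ after the substitutions. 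It does (one can check it by hand, or simply row-reduce the $8\times8$ matrix of $1+c_{12}+c_{12}c_{23}$ on $V_{\cO}^{\otimes3}$ for $\mathbb{D}_3$ with $e=1$, $\charf=2$, $q^2+q+1=0$, which yields kernel dimension exactly $2$), so your conclusion is correct; but as written, the step ``$\tau_{zxz}$, $\tau_{zzw}$ may be chosen independently, realizing $\dim=2$'' does not follow from what you have verified. The rest of the proposal -- the values $k_2=0$, $k_3=2$, $m=t=0$, the identity $x_1^2=x_2^2=x_3^2$, and the case analysis showing the total is at most $6$ outside the two admissible cases -- checks out.
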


\begin{remark}
  The Nichols algebra $\toba (V)$ in the case $\rho (x_1)=-1$ appeared first in
  \cite{MR1800714}. Some data about $\toba (V)$ can be found in Table~\ref{tab:nichols}.
  The Nichols algebra $\toba (V)$ in the case $\charf =2$,
  $\rho(x_1)^2+\rho(x_1)+1=0$ is an unpublished example found by H.-J.~Schneider
  and the first author. More details can be found in Proposition~\ref{prop:newD3}.
\end{remark}

\begin{proof}
Assume first that $\rho(x_1)=-1$ or $\charf=2$,
$\rho(x_1)^2+\rho(x_1)+1=0$. Then $\Hilb _{\toba (V)}(t)$ is a product of polynomials
$(n)_t$ and $(n)_{t^2}$ for some $n\in \ndN $, see Table~\ref{tab:nichols}.
We conclude that $\toba(V)$ has many cubic relations
by Theorem \ref{thm:main}(4)$\Rightarrow$(3).

Assume that $\toba(V)$ has many cubic relations and $\rho(x_1)\ne-1$. 
By Lemma~\ref{lem:b3_centralizer}, the group $C_{G_X}(x_1)$ is abelian. 
Hence the degree of $\rho$ is $e=1$. Further, Proposition~\ref{pro:orbit8} implies that
$\rdimker\leq2$ for all orbits $\cO$ of size $8$, since $\rho(x_1)\ne-1$. 
If $\rdimker\leq1$ for all orbits $\cO$ of size $8$, then 
Proposition~\ref{prop:verygi} yields a contradiction since $d=3$, $m=0$, $k_3=2$ and $d_1\le 1$.
Since the three Hurwitz orbits of size $8$ are conjugate, 
we conclude that $\rdimker=2$ for all orbits $\cO$ of size $8$. 
Proposition \ref{pro:orbit8b} implies that $(1+c_{12}^3)(v\otimes x_3v)=0$ for all $v\in V_{x_1}$.
Then
\begin{align*}
0=(1+c_{12}^3)(v\otimes x_3v)&=v\otimes x_3v+x_2x_1x_3v\otimes x_3v\\
&=(v+x_2^2x_1v)\otimes x_3v=2v\otimes x_3v
\end{align*}
since $x_1^2=x_2^2$. Therefore $\charf=2$. If $\rho(x_1)^2+\rho(x_1)+1\ne0$, then 
Proposition~\ref{pro:orbit1} gives that
$\rdimker=0$ for all orbits $\cO$ of size $1$.
Then Proposition~\ref{prop:verygi} yields a contradiction.
\end{proof}

Now we discuss one of the Nichols algebras mentioned above.
Assume that $\charf = 2$ and that $\fie $ contains an
element $q\in\fie $ with $q^2+q+1=0$. Recall that
$X=\mathbb{D}_3$. Let $\rho$ be the absolutely irreducible representation of $C_{G_X}(x_1)$ with
$\rho (x_1)=q$.
Let $V=M(x_1,\rho)$, $a\in V_{x_1}\setminus \{0\}$, $b=q ^{-1}x_3 a$ and $c=q^{-1}x_1 b$.
The action of $G_X$ on $V$ is then determined by Table~\ref{tab:GXactionD}.

\begin{table}
\begin{equation*}
 \begin{array}{c|ccc}
   & a & b & c\\ \hline
   x_1 & q a & q c & q b\\
   x_2 & q c & q b & q a\\
   x_3 & q b & q a & q c
 \end{array}
\end{equation*}
\caption{The action of $G_X$ on $V$, where $X=\mathbb{D}_3$.}
\label{tab:GXactionD}
\end{table}

\begin{prop} \label{prop:newD3}
The Nichols algebra $\toba (V)$
can be presented by generators $a,b,c$ with defining relations
\begin{gather}
\label{eq:D3_rel1}  ab+q^2bc+q ca=0,\\
\label{eq:D3_rel2}  ac+q^2cb+q ba=0,\\
\label{eq:D3_rel3}  a^3=b^3=c^3=0,\\
\label{eq:D3_rel4}  (a^2b^2)^3+b(a^2b^2)^2a^2b+b^2(a^2b^2)^2a^2+ab^2(a^2b^2)^2a=0.
\end{gather}
The Hilbert series of $\toba (V)$ is
\begin{align*}
\Hilb _{\toba (V)}(t)=&(3)_t(4)_t(6)_t(6)_{t^2}.
\end{align*}
The dimension of $\toba (V)$ is $432$. The top degree of $\toba (V)$ is $20$.
An integral of $\toba (V)$ is given by 
\[
a^2ba^2b(a^2b^2)^3c^2.
\]
\end{prop}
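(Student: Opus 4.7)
The plan is to proceed in three stages: verify that the stated relations hold in $\toba(V)$, bound the algebra presented by these relations from above by $432$, and bound $\toba(V)$ from below by producing a nonzero top-degree element.

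First, using Table~\ref{tab:GXactionD} I would compute the braiding $c$ on $V\otimes V$ explicitly and check that $ab+q^2bc+qca$ lies in $\ker(\id+c)$; relation~\eqref{eq:D3_rel2} then follows from \eqref{eq:D3_rel1} by acting with $x_1\in G_X$, since $\ker(\id+c)$ is a $G_X$-submodule of $V\otimes V$. The cubic relations in \eqref{eq:D3_rel3} are immediate: on each diagonal summand $V_{x_i}^{\otimes3}$ the operator $1+c_{12}+c_{12}c_{23}$ acts as the scalar $1+q+q^2=0$, so by Proposition~\ref{pro:orbit1} each pure tensor $v_{x_i}^{\otimes3}$ yields the relation $x_i^3=0$ in $\toba(V)$. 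For the degree-$20$ relation \eqref{eq:D3_rel4}, I would verify by direct calculation that its canonical lift to $V^{\otimes 20}$ lies in the kernel of the $20$th quantum symmetrizer.

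Second, let $\mathcal{A}$ denote the algebra presented by $a,b,c$ modulo \eqref{eq:D3_rel1}--\eqref{eq:D3_rel4}; step one then gives a surjection $\mathcal{A}\twoheadrightarrow\toba(V)$. I would bound $\Hilb_{\mathcal{A}}(t)$ coefficientwise from above by $(3)_t(4)_t(6)_t(6)_{t^2}$ through a normal-form argument: the quadratic relations rewrite $ab$, $bc$, $ca$ cyclically and reduce each word to a standard form with respect to a fixed monomial order; the cubic relations $a^3=b^3=c^3=0$ bound the exponents of each letter; and the degree-$20$ relation truncates the algebra in top degree. Collecting the resulting normal monomials by degree yields $\dim\mathcal{A}\le 432$ and the correct coefficient in each $\mathcal{A}_n$.

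Third, I would show that the element $\Lambda=a^2ba^2b(a^2b^2)^3c^2$ is nonzero in degree $20$ of $\toba(V)$. Since any finite-dimensional Nichols algebra is a graded Frobenius algebra with palindromic Hilbert series, the existence of a single nonzero element in degree $20$, together with the upper bound from the second step, forces $\Hilb_{\toba(V)}(t)=(3)_t(4)_t(6)_t(6)_{t^2}$, hence $\dim\toba(V)=432$, top degree $20$, and $\Lambda$ is an integral. Nonvanishing of $\Lambda$ itself can be checked by pairing with an explicit element of the graded dual Nichols algebra, or by applying an iterated braided coproduct and verifying that a chosen component is nonzero. The main obstacle is the second stage: the degree-$20$ relation \eqref{eq:D3_rel4} is far from routine to guess, and ruling out additional independent relations in intermediate degrees $4,5,\dots,19$ requires a careful rewriting system. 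In practice this is best executed by a computer calculation of a Gr\"obner basis, in the spirit of the \textsf{GAP} computations already used elsewhere in the paper; such a computation simultaneously produces the normal-form basis needed for the upper bound and certifies that no relations beyond those listed are required.
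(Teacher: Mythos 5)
Your proposal follows essentially the same route as the paper: verify the relations hold in $\toba(V)$, compute the Hilbert series of the presented algebra by a Gr\"obner basis calculation in \textsf{GAP}, and then invoke the Poincar\'e-duality/integral criterion (the paper cites \cite[Theorem 6.4 part (2)]{MR1994219}) to reduce everything to the nonvanishing of $a^2ba^2b(a^2b^2)^3c^2$, which the paper certifies by applying an explicit string of braided skew-derivations --- exactly the ``pairing with the graded dual'' you describe. One small correction: the relation \eqref{eq:D3_rel4} has degree $12$, not $20$ (only the top degree of the algebra is $20$), so its lift lives in $V^{\otimes 12}$ and must be checked against the $12$th quantum symmetrizer; this slip does not affect the structure of your argument.
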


\begin{proof}
The relations in
\eqref{eq:D3_rel1}--\eqref{eq:D3_rel4}
``generate'' a Hopf ideal of the tensor algebra $T(V)$.
Using the theory of Gr\"obner bases \cite{GBNP, GAP}, it can be seen that
the quotient algebra has the stated dimensions in each degree. Using \cite[Theorem 6.4 part (2)]{MR1994219},
it is sufficient to see that 
$a^2ba^2b(a^2b^2)^3c^2$
does not vanish in $\toba (V)$
in order to prove the claim. Direct calculation gives that
\[
\partial_b\partial_b\partial_a\partial_a\partial_c\partial_c
\partial_a\partial_a\partial_c\partial_c\partial_b\partial_b
\partial_c\partial_b\partial_c\partial_b\partial_c\partial_b
\partial_c\partial_c
\]
applied to $a^2ba^2b(a^2b^2)^3c^2$ gives a non-zero number.
This completes the proof. 
\end{proof}

\subsection{The rack $\mathcal{T}$}
\label{sec:T}

Let $X=\{1,2,3,4\}=\mathcal{T}$ and $d=4$.
Using that $X$ is braided, the rack structure of $X$ is uniquely
determined by $\varphi _1=(2\,\,3\,\,4)$. Note that $x_1\trid x_2=x_3$
in contrast to the convention $x_2\trid x_1=x_3$ in \cite[\S 5.2]{Marburg}.
Hence our group $G_X$ is the opposite of the group $G_X$ in \cite[\S 5.2]{Marburg}.

\begin{lemma}
\label{lem:b4_centralizer}
\cite[Lemma 5.5]{Marburg}.
The centralizer of $x_1$ in $G_X$ is abelian and is generated by $x_1$
and $x_2x_4$. Further, the relation $(x_2x_4)^2=x_1^4$ holds in $G_X$.
\end{lemma}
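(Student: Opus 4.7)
The plan is to verify the lemma in three steps: (i) $x_2 x_4$ commutes with $x_1$ in $G_X$, which makes $\langle x_1, x_2 x_4\rangle$ an abelian subgroup of $C_{G_X}(x_1)$; (ii) the power identity $(x_2 x_4)^2 = x_1^4$; (iii) no element outside $\langle x_1, x_2 x_4\rangle$ centralizes $x_1$.

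I would begin by determining the remaining inner permutations from $\varphi_1 = (2\,3\,4)$ via Lemma~\ref{lem:braided_cycle}: for each $j\in\{2,3,4\}$, since $\varphi_1(j)\neq j$, the lemma forces $\varphi_j(1)=\varphi_1^{-1}(j)$ and $\varphi_j(\varphi_1(j))=1$, which combined with $\varphi_j(j)=j$ and the $3$-cycle structure yields
\[
\varphi_2 = (1\,4\,3),\qquad \varphi_3 = (1\,2\,4),\qquad \varphi_4 = (1\,3\,2).
\]
This fixes all defining relations $x_i x_j = x_{\varphi_i(j)} x_i$ of $G_X$. Step (i) then reduces to the short calculation
\[
x_2 x_4 x_1 = x_2 (x_4 x_1) = x_2 x_3 x_4 = (x_2 x_3) x_4 = x_1 x_2 x_4,
\]
and step (ii) is a similar but longer finite manipulation: one expands $(x_2 x_4)^2$ and repeatedly applies the sixteen relations to normalize the word, moving $x_1$-factors to one side until $x_1^4$ appears. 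Both steps are purely combinatorial and contain no hidden difficulty.

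The main obstacle is step (iii). My approach is to use the canonical surjection $\pi\colon G_X \to \Inn(X) \cong \mathbb{A}_4$. Since $\pi(x_1)$ is a $3$-cycle, its centralizer in $\mathbb{A}_4$ is cyclic of order $3$, so any $g\in C_{G_X}(x_1)$ has the form $x_1^k h$ for some $h\in\ker\pi$, and the problem reduces to analyzing $\ker\pi\cap C_{G_X}(x_1)$. For this I would pass to the finite quotient $\overline{G_X} = G_X/\langle x_1^3\rangle$: since $X$ has degree $3$, this quotient is well-defined by \cite[Lemma 2.18]{Marburg} and can be identified explicitly (by hand, or using \textsf{GAP}, matching it to a small known group). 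The centralizer of the image of $x_1$ in $\overline{G_X}$ can be computed directly to be abelian of order $6$, generated by the images of $x_1$ and $x_2 x_4$. The element $x_1^3$ is central in $G_X$ (since $\varphi_1^3 = \id$ means $x_1^3$ commutes with every generator), so lifting back via the exact sequence $1\to\langle x_1^3\rangle\to G_X\to\overline{G_X}\to 1$ preserves centralizers and yields $C_{G_X}(x_1) = \langle x_1, x_2 x_4\rangle$, with the sole relation between the two generators being $(x_2 x_4)^2 = x_1^4$ established in step (ii).
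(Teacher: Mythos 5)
The paper itself offers no proof of this lemma: it is imported verbatim from \cite[Lemma 5.5]{Marburg} (modulo the opposite-group convention explained at the start of \S\ref{sec:T}), so your argument is necessarily a different, self-contained route. Its architecture is sound. The permutations $\varphi_2=(1\,4\,3)$, $\varphi_3=(1\,2\,4)$, $\varphi_4=(1\,3\,2)$ are correct, the step (i) computation $x_2x_4x_1=x_2x_3x_4=x_1x_2x_4$ is right, and the reduction in step (iii) works: $x_1^3$ is central, $\overline{G_X}\cong\mathrm{SL}(2,3)$ with $\bar x_1$ of order $3$, $C_{\overline{G_X}}(\bar x_1)$ is cyclic of order $6$, and $\overline{x_2x_4}$ equals $\bar x_1^{-1}$ times the central involution, so that $C_{G_X}(x_1)\subseteq\pi^{-1}\bigl(C_{\overline{G_X}}(\bar x_1)\bigr)=\langle x_1,x_2x_4\rangle\langle x_1^3\rangle=\langle x_1,x_2x_4\rangle$. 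Two caveats there: ``lifting preserves centralizers'' is false in general for central extensions, but you only ever need the trivial inclusion just displayed, the reverse one being step (i); and identifying $\overline{G_X}$ needs both the surjection onto $\mathrm{SL}(2,3)$ (free, from realizing $\mathcal{T}$ as a conjugacy class of order-$3$ matrices) and the upper bound $\#\overline{G_X}\le 24$, which is exactly the coset enumeration you delegate to \textsf{GAP}.

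The one place where the plan as written would stall is step (ii). The relations $x_ix_j=x_{i\trid j}x_i$ are symmetric enough that naive normalization cycles: $x_2x_4x_2x_4=x_2x_1x_4^2=x_4x_2x_4^2$ and you are back where you started, so ``moving $x_1$-factors to one side until $x_1^4$ appears'' does not terminate by itself. The missing ingredient is that $x_j^3$ is central for every $j$ (because $\varphi_j^3=\id$) and independent of $j$: conjugating the central element $x_2^3$ by $x_1$ gives $x_2^3=x_3^3$, and indecomposability then forces $x_1^3=x_2^3=x_3^3=x_4^3$. With this, $(x_2x_4)^2=x_2(x_4x_2)x_4=x_2x_1x_4^2=x_2x_1\cdot x_1^{-1}x_2^2x_1=x_2^3x_1=x_1^3x_1=x_1^4$, using $x_4=x_1^{-1}x_2x_1$ from $x_1x_4=x_2x_1$. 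This is still elementary, so it is a repairable omission rather than a fatal one, but ``purely combinatorial with no hidden difficulty'' undersells what the manipulation actually requires.
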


\begin{lemma}
\label{lem:Torbit8}
Let $x,y,x',y'\in X$ with $x\ne y$ and $x'\ne y'$. Then
$\cO(x,x,y)$ and $\cO(x',x',y')$ are conjugate.
\end{lemma}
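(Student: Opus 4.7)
The plan is to produce an element $g\in G_X$ such that the diagonal action of $g$ sends the triple $(x,x,y)$ to $(x',x',y')$. Since the diagonal $G_X$-action on $X^3$ commutes with the Hurwitz action (as recalled in \eqref{eq:GXonXn} and the paragraph just after), such a $g$ immediately induces a braid-equivariant bijection $\cO(x,x,y)\to\cO(x',x',y')$, which is exactly what conjugacy of orbits means.

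Thus the entire content of the lemma reduces to showing that $G_X$ acts $2$-transitively on $X$, i.e.\ that for any pair $(x,y),(x',y')$ of ordered distinct elements of $X$ there is $g\in G_X$ with $g\trid x=x'$ and $g\trid y=y'$. This action factors through $\mathrm{Inn}(X)$, so I would argue inside $\mathrm{Inn}(\mathcal{T})$. By the description at the beginning of \S\ref{sec:T}, the rack structure is fixed by $\varphi_1=(2\,\,3\,\,4)$, and using $\varphi_{i\trid j}=\varphi_i\varphi_j\varphi_i^{-1}$ one reads off $\varphi_2=(3\,\,1\,\,4)$, $\varphi_3=(4\,\,1\,\,2)$, $\varphi_4=(1\,\,3\,\,2)$, so $\mathrm{Inn}(\mathcal{T})$ is the alternating group $\mathbb{A}_4$ acting on $X=\{1,2,3,4\}$ in the standard way.

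It remains to observe that $\mathbb{A}_4$ acts $2$-transitively on $\{1,2,3,4\}$: it is transitive on $X$ since $\mathcal{T}$ is indecomposable, and the stabilizer of $1$ contains $\varphi_1=(2\,\,3\,\,4)$, which acts transitively on $X\setminus\{1\}$. Hence for given $x\ne y$ and $x'\ne y'$ in $X$, first pick $g_1\in G_X$ with $g_1\trid x=x'$, then pick $g_2\in G_X$ fixing $x'$ and sending $g_1\trid y$ to $y'$; the product $g=g_2g_1$ then satisfies $g\trid(x,x,y)=(x',x',y')$.

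There is no serious obstacle here: everything reduces to the elementary fact that $\mathbb{A}_4$ is $2$-transitive on four points, and the observation that this suffices to conjugate the generating triple of one orbit onto the generating triple of the other. The only point requiring a bit of care is confirming that $\mathrm{Inn}(\mathcal{T})\cong\mathbb{A}_4$, which is immediate from the explicit permutations above.
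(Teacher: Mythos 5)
Your proof is correct and follows essentially the same route as the paper: the paper also reduces the claim to moving $(x,x,y)$ to $(x',x',y')$ by the diagonal $G_X$-action, first using $\varphi_1=(2\,3\,4)$ to identify $\cO(1,1,z)$ for all $z\ne 1$ and then invoking indecomposability to move the repeated entry, which is exactly your $2$-transitivity argument unpacked. Your explicit identification of $\Inn(\mathcal{T})$ with $\mathbb{A}_4$ is a harmless extra step the paper does not need.
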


\begin{proof}
 By applying $\varphi_1$ we conclude that $\cO(1,1,2)$ is conjugate to
 $\cO(1,1,z)$ for all $z\in X\setminus \{1\}$.
 Since $X$ is indecomposable, the claim follows.
\end{proof}

\begin{prop}
\label{prop:b4}
Let $\rho$ be an absolutely irreducible representation of $C_{G_X}(x_1)$ and let
$V=M(x_1,\rho)$. Then $\toba(V)$ has many cubic relations if and only if
\begin{enumerate}
\item $\rho(x_1)=-1$ and $\rho(x_2x_4)=1$, or
\item $\rho(x_1)^2+\rho(x_1)+1=0$ and $\rho(x_2x_4)=-\rho(x_1)^{-1}$.
\end{enumerate}
\end{prop}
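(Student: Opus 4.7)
Lemma~\ref{lem:b4_centralizer} shows that $C_{G_X}(x_1)$ is abelian, so the absolute irreducibility of $\rho$ forces $e:=\dim V_{x_1}=1$. Write $q=\rho(x_1)$ and $r=\rho(x_2x_4)$. The ``if'' direction is immediate from Theorem~\ref{thm:main}(4)$\Rightarrow$(3): the two Yetter--Drinfeld modules listed in (1) and (2) correspond to entries of Table~\ref{tab:nichols} whose Hilbert series are products of the factors $(n)_t$ and $(n)_{t^2}$.

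For the ``only if'' direction I would first read off the Hurwitz invariants of $X=\mathcal{T}$ from the cycle structure $\varphi_1=(2\,3\,4)$: every $\varphi_i$ is a $3$-cycle fixing only $i$, so $k_2=t=0$ and $k_3=m=3$. Proposition~\ref{pro:sizes} then yields $\nro{1}=4$, $\nro{8}=6$, $\nro{12}=1$ and $\nro{j}=0$ for all other $j$. With $d=4$, $e=1$, $k_3=3$, $m=3$ the inequality of Proposition~\ref{prop:verygi} collapses to
\[ 3d_8+2d_1\ge 8, \]
where $d_1, d_8$ are the uniform upper bounds on $\rdimker$ for Hurwitz orbits of size $1$ and $8$. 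If $q=-1$, Proposition~\ref{pro:orbit1} forces $d_1=0$ and Proposition~\ref{pro:orbit8}(1) gives $d_8\le 3$, so every $8$-orbit must be optimal in the sense of Definition~\ref{def:optimal} and Proposition~\ref{pro:orbit8a} delivers the vanishing $(1+c^3)(v_{x_1}\otimes v_{x_2})=0$. If $q\ne -1$, Proposition~\ref{pro:orbit8}(2) gives $d_8\le 2$ and hence $d_1\ge 1$; together with $e=1$, Proposition~\ref{pro:orbit1} forces $1+q+q^2=0$, the inequality then demands $d_8=2$ on every $8$-orbit, and Proposition~\ref{pro:orbit8b} again produces the same vanishing. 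Lemma~\ref{lem:Torbit8} guarantees that one representative $8$-orbit suffices, since the kernel dimension is a conjugation invariant.

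The technical crux is converting the vanishing $(1+c^3)(v_{x_1}\otimes v_{x_2})=0$ into the claimed relation between $q$ and $r$. I would fix coset representatives $t_2=x_3$, $t_3=x_4$, $t_4=x_2$ of $C_{G_X}(x_1)$ in $G_X$, each satisfying $t_ix_1t_i^{-1}=x_i$, and take $v_i=t_i\otimes 1_\rho$ as a basis of $V$. The braided-rack relation $x_1x_3=x_4x_1$ in $G_X$, together with $x_2x_4\in C_{G_X}(x_1)$, yield
\[ x_1v_2=qv_3,\qquad x_3v_1=v_2,\qquad x_2v_3=rv_1, \]
and iterating the Yetter--Drinfeld braiding $c(v\otimes w)=x\cdot w\otimes v$ for $v\in V_x$ gives $c^3(v_1\otimes v_2)=qr\,v_1\otimes v_2$. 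Thus $(1+c^3)(v_1\otimes v_2)=0$ is equivalent to $qr=-1$: in Case (1) this reads $r=1$ and in Case (2) it reads $r=-q^{-1}$, exactly as the proposition claims.
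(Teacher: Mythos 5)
Your proposal is correct and follows essentially the same route as the paper: the same reduction to $e=1$ via Lemma~\ref{lem:b4_centralizer}, the same inequality from Proposition~\ref{prop:verygi} (stated there as $36d_8+24d_1\ge 96$, which is your $3d_8+2d_1\ge 8$), the same case split on $q=-1$ versus $1+q+q^2=0$ using Propositions~\ref{pro:orbit1}, \ref{pro:orbit8}, \ref{pro:orbit8a}, \ref{pro:orbit8b} and Lemma~\ref{lem:Torbit8}. Your explicit coset-representative computation of $c^3(v_1\otimes v_2)=\rho(x_1)\rho(x_2x_4)\,v_1\otimes v_2$ is just an expanded version of the paper's one-line identity $(1+c_{12}^3)(v\otimes x_3v)=(1+\rho(x_2x_4)\rho(x_1))\,v\otimes x_3v$.
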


\begin{remark}
  The Nichols algebra $\toba (V)$ with $\rho $ as in (1) appeared first in
  \cite[Thm.\,6.15]{MR1994219}. For arbitrary fields the example was discussed in
  \cite[Prop.\,5.6]{Marburg}. Recall that $\toba (V)$ depends essentially on $\charf $.

  The Nichols algebra $\toba (V)$ with $\rho $ as in (2) is new. It will be discussed
  in Proposition~\ref{prop:newT}.
\end{remark}

\begin{proof}
Assume first that (1) or (2) hold. Then $\Hilb _{\toba (V)}(t)$ is a product of polynomials
$(n)_t$ and $(n)_{t^2}$ for some $n\in \ndN $, see Table~\ref{tab:nichols}.
We conclude that $\toba(V)$ has many cubic relations
by Theorem \ref{thm:main}(4)$\Rightarrow$(3).

Assume that $\toba(V)$ has many cubic relations.
By Lemma~\ref{lem:b4_centralizer}, the group $C_{G_X}(x_1)$ is abelian. 
Hence the degree of $\rho$ is $e=1$. Since $m=k_3=3$, Proposition~\ref{prop:verygi}
implies that $36d_8+24d_1\ge 96$, where $d_1\in \{0,1\}$ and $d_8\in \{0,1,2,3\}$
by Proposition~\ref{pro:orbit8}. Hence $d_8=3$, $d_1\in \{0,1\}$ or $d_8=2$, $d_1=1$.

Assume first that $\rho (x_1)=-1$. Then we can choose $d_1=0$ by Proposition~\ref{pro:orbit1}.
Since then $d_8=3$, there is at least one $8$-orbit with immunity $3/8$.
By Lemma~\ref{lem:Torbit8}, all Hurwitz orbits of size $8$ are conjugate.
Hence for each $8$-orbit the pair $(V,\cO )$ is optimal
with respect to $1+c_{12}+c_{12}c_{23}$. Thus Proposition~\ref{pro:orbit8a}
implies that
\begin{equation}
\begin{aligned} \label{eq:444}
0=(1+c_{12}^3)(v\otimes x_3v)&=v\otimes x_3v+x_2x_1x_3v\otimes x_3v\\
&=(v+x_2x_4x_1v)\otimes x_3v=(1+\rho (x_2x_4)\rho (x_1))v\otimes x_3v
\end{aligned}
\end{equation}
for all $v\in V_{x_1}$. Since $\rho (x_1)=-1$,
it follows that $\rho (x_2x_4)=1$, that is, (1) holds.

Assume now that $\rho (x_1)\not=-1$. Then,
by Proposition~\ref{pro:orbit8a}, the pair $(V,\cO )$ is not optimal
with respect to $1+c_{12}+c_{12}c_{23}$ for any $8$-orbit $\cO $.
Hence $d_8=2$ and $d_1=1$. Proposition~\ref{pro:orbit1} and $d_1=1$ imply that
$\rho (x_1)^2+\rho (x_1)+1=0$.
By Lemma~\ref{lem:Torbit8}, all Hurwitz orbits of size $8$ are conjugate. Hence
$\rdimker=2$ for all orbits $\cO$ of size $8$. 
Proposition \ref{pro:orbit8b} implies that \eqref{eq:444} holds for all $v\in V_{x_1}$,
that is, $\rho (x_2x_4)=-\rho (x_1)^{-1}$. This proves the claim.
\end{proof}

Now we discuss the Nichols algebra corresponding to $\rho $ in
Proposition~\ref{prop:b4}(2).
Assume that $\fie $ contains an element $q\in\fie $ with $q^2+q+1=0$. Recall that
$X=\mathcal{T}$. Let $\rho$ be the absolutely irreducible representation of $C_{G_X}(x_1)$ with
$\rho (x_1)=-1$, $\rho (x_4x_2)=1$.
Let $V=M(x_1,\rho)$, $a\in V_{x_1}\setminus \{0\}$, $b=q^{-1}x_3a\in V_{x_2}$,
$c=q^{-1}x_4a\in V_{x_3}$,
$d=q^{-1}x_2a\in V_{x_4}$.
The action of $G_X$ on $V$ is then determined by Table~\ref{tab:GXactionT}.

\begin{table}
\begin{equation*}
 \begin{array}{c|cccc}
   & a & b & c & d\\ \hline
   x_1 & q a & q c & qd & q b\\
   x_2 & q d & q b & -qa & -q c\\
   x_3 & q b & -q d & qc & -q a\\
   x_4 & q c & -q a & -qb & qd
 \end{array}
\end{equation*}
\caption{The action of $G_X$ on $V$, where $X=\mathcal{T}$.}
\label{tab:GXactionT}
\end{table}

\begin{prop} \label{prop:newT}
The Nichols algebra $\toba (V)$
can be presented by generators $a,b,c,d$ with defining relations
\begin{gather}
a^{3}=b^{3}=c^{3}=d^{3}=0\label{eq:T_rel1}\\
-q^2ab -q bc + ca=-q^2ac -q cd + da=0\\
q ad -q^2ba + db=q bd + q^2cb + dc=0\\
a^2bcb^2 + abcb^2a + bcb^2a^2 + cb^2a^2b + b^2a^2bc + ba^2bcb \notag \\
+ bcba^2c + cbabac + cb^2aca =0.\label{eq:T_rel6}
\end{gather}
The Hilbert series of $\toba (V)$ is
\begin{align*}
\Hilb _{\toba (V)}(t)=(6)_t^4(2)_{t^2}^2.
\end{align*}
The dimension of $\toba (V)$ is $5184$. The top degree of $\toba (V)$ is $24$.
An integral of $\toba (V)$ is given by
\[
a^2ba^2ba^2b^2a^2cb^2a^2cb^2a^2d^2.
\]
\end{prop}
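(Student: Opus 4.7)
The argument will mirror the proof of Proposition~\ref{prop:newD3}. Write $I \subseteq T(V)$ for the two-sided ideal of the tensor algebra generated by the left-hand sides of \eqref{eq:T_rel1}--\eqref{eq:T_rel6}, and set $A = T(V)/I$. My first task would be to check that $I$ is a braided Hopf ideal of $T(V)$, so that the canonical surjection $T(V) \twoheadrightarrow \toba(V)$ factors through $A$. Using the $G_X$-action from Table~\ref{tab:GXactionT} to compute the braiding $c$ on $V \otimes V$, one verifies that each quadratic generator lies in $\ker(1+c)$, that each cubic generator is primitive modulo the quadratic relations (equivalently, lies in the appropriate kernel of the third quantum symmetrizer after passing to the quotient by the quadratic ideal), and that the degree-$5$ generator \eqref{eq:T_rel6} is primitive modulo the ideal generated by the lower-degree relations. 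These are finite linear-algebra checks.

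Next I would compute the Hilbert series of $A$. A noncommutative Gr\"obner basis computation with the GAP package GBNP \cite{GBNP,GAP} on the ideal $I$, with respect to a convenient admissible monomial order, produces a finite Gr\"obner basis from which one reads off
\[
\Hilb_A(t) = (6)_t^4(2)_{t^2}^2.
\]
In particular $\dim A = 5184$, the algebra $A$ is concentrated in degrees $0,1,\dots,24$, and $\dim A_{24} = 1$.

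At this point $A$ is a pre-Nichols algebra of $V$ with the Hilbert series one expects for $\toba(V)$, so by \cite[Theorem~6.4(2)]{MR1994219} the canonical surjection $A \twoheadrightarrow \toba(V)$ is an isomorphism provided the image of $A_{24}$ in $\toba(V)$ is nonzero. A generator of $A_{24}$ is the integral candidate $\omega = a^2ba^2ba^2b^2a^2cb^2a^2cb^2a^2d^2$, so I would conclude by exhibiting an explicit composition of $24$ skew derivations $\partial_{x_i}$ on $\toba(V)$ whose value on $\omega$ is a nonzero scalar in $\fie$, following the pattern used for $\mathbb{D}_3$ in Proposition~\ref{prop:newD3}.

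The hard part will be this last step. Each derivation must be commuted past the remaining letters of $\omega$, acquiring a factor of $q$ or $-1$ at each crossing dictated by Table~\ref{tab:GXactionT}, and one must verify that the accumulated $q$-polynomial does not vanish despite potential cross-braid cancellations. The computation is mechanical but lengthy, and in practice is best handled by automated $q$-symbolic manipulation or directly inside GBNP.
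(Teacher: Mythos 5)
Your proposal follows the paper's proof essentially step for step: verify that the relations generate a braided Hopf ideal, compute the graded dimensions of the quotient by a noncommutative Gr\"obner basis calculation with GBNP, invoke \cite[Theorem 6.4(2)]{MR1994219} to reduce everything to the nonvanishing of the degree-$24$ element, and finish by applying an explicit string of $24$ skew derivations to $a^2ba^2ba^2b^2a^2cb^2a^2cb^2a^2d^2$ (the paper records the specific sequence, whose value is $-q^2$). The approach and all key ingredients match; only the explicit derivation word is left unstated in your write-up.
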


\begin{proof}
The relations in
\eqref{eq:T_rel1}--\eqref{eq:T_rel6}
``generate'' a Hopf ideal of the tensor algebra $T(V)$.
Using the theory of Gr\"obner bases \cite{GBNP, GAP}, it can be seen that the quotient algebra
has the stated dimensions in each degree. Using \cite[Theorem 6.4 part (2)]{MR1994219},
it is sufficient to see that 
$a^2ba^2ba^2b^2a^2cb^2a^2cb^2a^2d^2$
does not vanish in $\mathfrak{B}(V)$
in order to prove the claim. Direct calculation gives that
\[
\partial_c\partial_c\partial_d\partial_c\partial_c\partial_d\partial_c\partial_c\partial_d\partial_d\partial_c\partial_c\partial_b\partial_b\partial_d\partial_d\partial_b\partial_a\partial_d\partial_d\partial_a\partial_a\partial_b\partial_b
\]
applied to 
$a^2ba^2ba^2b^2a^2cb^2a^2cb^2a^2d^2$
gives $-q^2$.  
This completes the proof. 
\end{proof}

\subsection{The rack $\mathcal{A}$}
\label{sec:A}

Let $X=\{1,2,3,4,5,6\}=\mathcal{A}$ and $d=\#X=6$.
Using that $X$ is braided, the rack structure of $X$ is uniquely
determined by $\varphi _1=(2\,\,3)(5\,\,6)$,
$\varphi _{2}=(1\,\,3)(4\,\,5)$.

\begin{lemma} \cite[Lemma 5.8]{Marburg}
\label{lem:b6A_centralizer}
The centralizer of $x_1$ in $G_X$ is the abelian group generated by $x_1$ and $x_4$.
These generators satisfy $x_1^2=x_4^2$.
\end{lemma}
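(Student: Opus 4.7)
My plan is to establish three claims in order: (i) $x_1$ and $x_4$ commute in $G_X$, (ii) $x_1^2 = x_4^2$, and (iii) every element of $C_{G_X}(x_1)$ lies in $\langle x_1,x_4\rangle$. Claim (i) is immediate from the defining relations of the enveloping group: the permutation $\varphi_1=(2\,3)(5\,6)$ fixes $4$, so $1\trid 4=4$ and hence $x_1x_4=(x_1\trid x_4)x_1=x_4x_1$. In particular $\langle x_1,x_4\rangle$ is already abelian and is contained in $C_{G_X}(x_1)$.

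For claim (ii) I will prove the stronger statement $x_i^2=x_j^2$ for all $i,j\in X$. The key step is to show that $i\trid j\neq j$ already forces $x_i^2=x_j^2$. Indeed, because $X$ has degree $2$, Lemma~\ref{lem:braided_cycle} gives $k:=i\trid j=j\trid i$, so $x_ix_j=x_kx_i$ and $x_jx_i=x_kx_j$. Using additionally that $\varphi_i^2=\id$ implies $i\trid k=j$, a short manipulation yields the braid identity $x_ix_jx_i=x_jx_ix_j$; comparing this with the factorizations $x_ix_jx_i=x_kx_i^2$ and $x_jx_ix_j=x_kx_j^2$ gives $x_i^2=x_j^2$. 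Indecomposability of $X$ forces the non-commutation graph on $X$ to be connected (otherwise a component would be a proper $\Inn(X)$-invariant subset, contradicting transitivity), so the equality propagates across $X$. In particular $x_1^2=x_4^2$, and the analogous computation $x_jx_1^2=x_{j\trid 1}^2\,x_j=x_1^2x_j$ shows this common square is central in $G_X$.

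For claim (iii) I will use the identification of the finite enveloping group $\overline{G_X}=G_X/\langle x_1^2\rangle$ with $\SG 4$, sending each $x_i$ to the corresponding transposition. Since $\langle x_1^2\rangle$ is central, any $g\in C_{G_X}(x_1)$ projects into $C_{\SG 4}(\bar x_1)=\{\bar e,\bar x_1,\bar x_4,\bar x_1\bar x_4\}$. Each of these four cosets admits a representative in $\langle x_1,x_4\rangle$, and $\langle x_1^2\rangle\subseteq\langle x_1,x_4\rangle$, so $g\in\langle x_1,x_4\rangle$, proving the reverse inclusion.

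The main obstacle is the identification $\overline{G_X}\simeq\SG 4$ used in claim (iii); with it in hand, everything else is elementary. I would either invoke it from the general theory of finite enveloping groups of braided conjugacy classes (as already exploited in earlier examples), or verify it directly by observing that the relations of $G_X$ together with $x_i^2=1$ yield the Coxeter presentation of $\SG 4$: the braid identity $x_ix_jx_i=x_jx_ix_j$ derived in claim (ii) together with commutativity when $i\trid j=j$ are exactly the required relations on the six generators.
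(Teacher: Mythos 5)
Your argument is correct. Note, however, that the paper itself offers no proof of this lemma: it is quoted verbatim from \cite[Lemma 5.8]{Marburg}, so there is no in-paper argument to compare against. Your three steps all check out. Claim (i) is immediate from $1\trid 4=4$ and the defining relation $xy=(x\trid y)x$ of $G_X$. In claim (ii), the computation $x_ix_jx_i=x_i(x_kx_j)=x_{i\trid k}x_ix_j=x_jx_ix_j$ (using $j\trid i=i\trid j=k$ and $i\trid k=j$, both consequences of braidedness and degree $2$) together with the factorizations $x_kx_i^2$ and $x_kx_j^2$ does give $x_i^2=x_j^2$ on non-commuting pairs, and your connectivity argument for the non-commutation graph is sound: a connected component is $\Inn(X)$-stable (if $i\trid y=k\ne y$ then $y\trid k=i\ne k$ by Lemma~\ref{lem:braided_cycle}, so $k$ stays in the component of $y$), contradicting indecomposability if proper. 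Claim (iii) correctly reduces the computation to $C_{\SG 4}((1\,2))=\{e,(1\,2),(3\,4),(1\,2)(3\,4)\}$ via the central quotient $G_X\to\overline{G_X}$, and you rightly flag $\overline{G_X}\simeq\SG 4$ as the one nontrivial input; your proposed verification (the rack is generated by three elements forming an $A_3$ path, the induced relations in $\overline{G_X}$ are exactly the Coxeter presentation, and a quotient of $\SG 4$ surjecting onto $\SG 4$ is $\SG 4$) closes that gap. The approach in \cite{Marburg} likewise goes through the structure of $G_X$ as a central extension of the finite enveloping group; what your write-up adds is a self-contained derivation of the centrality of $x_1^2$ and of the braid relations directly from the rack axioms, which is a reasonable trade for not having the general theory of enveloping groups at hand.
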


\begin{prop}
\label{prop:b6A}
Let $\rho$ be an absolutely irreducible representation of $C_{G_X}(x_1)$ and let
$V=M(x_1,\rho)$. Then $\toba(V)$ has many cubic relations if and only if
$\rho(x_1)=-1$ and $\rho(x_4)\in\{-1,1\}$. 
\end{prop}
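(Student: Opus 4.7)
The plan is to follow the template already set by Propositions~\ref{prop:b3} and \ref{prop:b4}: dispatch the ``if'' direction by citing Table~\ref{tab:nichols} together with the easy implication (4)$\Rightarrow$(3) of Theorem~\ref{thm:main}, and for the converse invoke the master inequality of Proposition~\ref{prop:verygi} with the invariants of $\mathcal{A}$ plugged in. The constraint on $\rho(x_4)$ will then drop out for free from the centralizer relation $x_1^2=x_4^2$ supplied by Lemma~\ref{lem:b6A_centralizer}.

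For the forward direction I would observe that the two admissible pairs $(\rho(x_1),\rho(x_4)) \in \{(-1,1),(-1,-1)\}$ produce entries of Table~\ref{tab:nichols} whose Hilbert series are already products of factors $(n)_t$ and $(n)_{t^2}$, so Theorem~\ref{thm:main}(4)$\Rightarrow$(3) finishes this implication immediately.

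For the converse, assume $\toba(V)$ has many cubic relations. Since $C_{G_X}(x_1)$ is abelian by Lemma~\ref{lem:b6A_centralizer}, the degree of $\rho$ is $e=\dim V_{x_1}=1$. For $\mathcal{A}$ I would record $d=6$ and $k_3=4$ by inspection, and observe that $m=0$: indeed $\mathcal{A}$ has degree $2$, so $\varphi_1^2=\id$ and hence $1\trid^3 x=1\trid x$, which equals $x$ iff $1\trid x=x$. Substituting $d=6$, $e=1$, $k_3=4$, $m=0$ into \eqref{eq:gen_ineq18} collapses that inequality to
\[
48\,d_8 + 24\,d_1 \ge 136.
\]
Setting $q=\rho(x_1)$, Proposition~\ref{pro:orbit1} gives $d_1=0$ when $q=-1$ and $d_1\le 1$ in general, while Proposition~\ref{pro:orbit8} yields $d_8\le 3$ when $q=-1$ and $d_8\le 2$ when $q\ne -1$. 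In the case $q\ne -1$ the left-hand side is at most $48\cdot 2 + 24\cdot 1 = 120 < 136$, a contradiction; hence $\rho(x_1)=-1$ is forced. The relation $x_1^2=x_4^2$ from Lemma~\ref{lem:b6A_centralizer} then yields $\rho(x_4)^2=\rho(x_1)^2=1$, so $\rho(x_4)\in\{-1,1\}$.

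There is no real obstacle here, and in particular no further refined analysis via Propositions~\ref{pro:orbit8a} or \ref{pro:orbit8b} is required: once $\rho(x_1)=-1$ is forced by the counting inequality, the centralizer relation $x_1^2=x_4^2$ automatically supplies the only remaining constraint. The calculation is purely arithmetic and considerably lighter than its counterparts for $\mathbb{D}_3$ and $\mathcal{T}$, where analyzing whether the $8$-orbits are optimal was needed to pin down a second character value.
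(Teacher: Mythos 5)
Your proposal is correct and follows essentially the same route as the paper: the ``if'' direction via Table~\ref{tab:nichols} and Theorem~\ref{thm:main}(4)$\Rightarrow$(3), and the converse by specializing Proposition~\ref{prop:verygi} to $d=6$, $e=1$, $k_3=4$, $m=0$ to get $24d_1+48d_8\ge 136$, ruling out $q\ne -1$ by the bounds $d_8\le 2$, $d_1\le 1$, and then reading off $\rho(x_4)\in\{-1,1\}$ from $x_1^2=x_4^2$. Your closing observation that no optimality analysis of the $8$-orbits is needed here is also exactly what the paper does.
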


\begin{remark}
  The Nichols algebras $\toba (V)$ with $\rho (x_4)=-1$ and $\rho (x_4)=1$
  appeared first in \cite[Example 6.4]{MR1800714} and \cite[Def.\,2.1]{MR1667680}, respectively.
  These two Nichols algebras are twist equivalent, see \cite{twisting}.
  Their Hilbert series are given in Table~\ref{tab:nichols}.
\end{remark}

\begin{proof}
If $\rho (x_1)=-1$, then $\rho (x_4)^2=\rho (x_1)^2=1$ and hence $\rho (x_4)\in \{-1,1\}$.
Then $\toba (V)$ has many cubic relations by Theorem~\ref{thm:main}(4)$\Rightarrow $(3)
and Table~\ref{tab:nichols}.

Assume that $\toba(V)$ has many cubic relations. 
By Lemma~\ref{lem:b6A_centralizer}, the group $C_{G_X}(x_1)$ is abelian. 
Hence the degree of $\rho$ is $e=1$. 
Since $d=6$, $k_3=4$ and $m=0$, Proposition \ref{prop:verygi} implies that
\begin{equation}
\label{eq:b6A}
24 d_1+48d_8\ge 136.
\end{equation}
If $q\not=-1$, then we may set $d_8<3$ by Proposition~\ref{pro:orbit8}. This is a contradiction
to \eqref{eq:b6A}. Hence $\rho (x_1)=-1$ and the claim of the proposition follows.
\end{proof}

\subsection{The rack $\mathcal{B}$}
\label{sec:B}

Let $X=\{1,2,\dots,6\}=\mathcal{B}$ and $d=\#X=6$.
Using that $X$ is braided, the rack structure of $X$ is uniquely determined
by $\varphi _1=(2\,\,3\,\,4\,\,5)$,
$\varphi _2=(1\,\,5\,\,6\,\,3)$.

\begin{lemma} \cite[Lemma 5.10]{Marburg}
\label{lem:b6B_centralizer}
The centralizer of $x_1$ in $G_X$ is the abelian group generated by $x_1$ and $x_6$.
These generators satisfy $x_1^4=x_6^4$.
\end{lemma}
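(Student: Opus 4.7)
The plan is to prove the two containments $\langle x_1,x_6\rangle\subseteq C_{G_X}(x_1)$ and $C_{G_X}(x_1)\subseteq\langle x_1,x_6\rangle$, along with the relation $x_1^4=x_6^4$. The first containment and the relation are the easy part. From the explicit rack structure in Proposition~\ref{prop:deg4}, $\varphi_1=(2\,3\,4\,5)$ fixes $6$, so $x_1\trid x_6=x_6$, which in $G_X$ translates via the defining relation $xy=(x\trid y)x$ into $x_1x_6=x_6x_1$. Hence $\langle x_1,x_6\rangle$ is abelian and contained in $C_{G_X}(x_1)$. For the relation $x_1^4=x_6^4$, observe that for any $y\in X$ one has $y^n z y^{-n}=\varphi_y^n(z)$ for every generator $z\in X$. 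Since the degree of $\mathcal{B}$ is $4$, $\varphi_y^4=\mathrm{id}$ and hence $y^4$ is central in $G_X$ for every $y\in X$. By indecomposability of $X$ and Remark~\ref{rem:inner}, $x_1$ and $x_6$ are conjugate in $G_X$, so the central elements $x_1^4$ and $x_6^4$ are conjugate, which forces $x_1^4=x_6^4$.

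For the nontrivial containment $C_{G_X}(x_1)\subseteq\langle x_1,x_6\rangle$, I use the surjective homomorphism $\pi\colon G_X\twoheadrightarrow\SG 4$ obtained by sending each $x_i$ to the corresponding $4$-cycle in the conjugacy class $\mathcal{B}$ of $\SG 4$. Such a $\pi$ exists because, by definition, the defining relations of $G_X$ hold in $\SG 4$. In $\SG 4$ the centralizer of a $4$-cycle is the cyclic group of order $4$ it generates. Since $\pi(x_6)$ is a $4$-cycle commuting with $\pi(x_1)$ but distinct from $\pi(x_1)$ (because $\mathcal{B}$ is faithful and $x_6\neq x_1$), necessarily $\pi(x_6)=\pi(x_1)^{-1}$. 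Hence $\pi(\langle x_1,x_6\rangle)=C_{\SG 4}(\pi(x_1))$, and for any $g\in C_{G_X}(x_1)$ there exists $k\in\{0,1,2,3\}$ with $\pi(g)=\pi(x_1^k)$, so $g=x_1^k n$ with $n\in\ker\pi\cap C_{G_X}(x_1)$.

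What remains is to show $\ker\pi\cap C_{G_X}(x_1)\subseteq\langle x_1,x_6\rangle$, which is the main obstacle. Since $x_1^4$ is central, $\langle x_1^4\rangle$ is a normal subgroup of $G_X$ contained in $\ker\pi$, and $\pi$ factors through an epimorphism $\overline{G_X}=G_X/\langle x_1^4\rangle\twoheadrightarrow\SG 4$. The hard step is to prove that this induced epimorphism is an isomorphism, i.e., that $\overline{G_X}$ has order exactly $24$. This can be done analogously to the identifications $\overline{G_X}\simeq\SG 4$ and $\overline{G_X}\simeq\mathrm{SU}(3,2)'$ used for the degree-two racks in Examples~\ref{example:deg2_size6} and \ref{example:deg2_size9}, for instance by exhibiting a set of $24$ coset representatives for $\langle x_1^4\rangle$ in $G_X$ or via a direct computer-algebra verification of the presentation. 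Once $\overline{G_X}\simeq\SG 4$ is established, one has $\ker\pi=\langle x_1^4\rangle\subseteq\langle x_1\rangle\subseteq\langle x_1,x_6\rangle$, which concludes the proof.
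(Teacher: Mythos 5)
The paper itself does not prove this lemma; it quotes it from \cite[Lemma 5.10]{Marburg}, so there is no internal proof to compare against. Judged on its own merits, your first two steps are correct: $1\trid 6=6$ gives $x_1x_6=x_6x_1$ in $G_X$, and the observation that $x_y^4$ is central for every $y\in X$ (because the degree of $\mathcal{B}$ is $4$), combined with the fact that $x_1$ and $x_6$ are conjugate in $G_X$, does yield $x_1^4=x_6^4$. The reduction of the hard containment to showing $\ker\pi\cap C_{G_X}(x_1)\subseteq\langle x_1,x_6\rangle$, via the centralizer of a $4$-cycle in $\SG 4$, is also sound.

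The problem is the step you defer to coset enumeration or computer algebra: the claim that $\overline{G_X}=G_X/\langle x_1^4\rangle\simeq\SG 4$, equivalently that $\ker\pi=\langle x_1^4\rangle$, is not merely unproved --- it is false. You yourself establish that $\pi(x_6)=\pi(x_1)^{-1}$, so $x_1x_6\in\ker\pi$. But $x_1x_6\notin\langle x_1^4\rangle$: since $X$ is indecomposable, the relations $x_ix_j=x_{i\trid j}x_i$ abelianize to identify all six generators, so the abelianization of $G_X$ is infinite cyclic with each $x_i$ mapping to the same generator $t$; then $x_1x_6\mapsto 2t$ while every element of $\langle x_1^4\rangle$ maps into $4\ndZ\, t$. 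Hence $\ker\pi\supsetneq\langle x_1^4\rangle$, and $\overline{G_{\mathcal{B}}}$ is a proper central extension of $\SG 4$ of order at least $48$ --- unlike the degree-two racks $\mathcal{A}$ and $\mathrm{Aff}(9,2)$ that you invoke as models. What you actually need is $\ker\pi\subseteq\langle x_1,x_6\rangle$ (plausibly $\ker\pi=\langle x_1^4,\,x_1x_6\rangle$), and establishing that requires a genuine analysis of the central kernel of $G_X\to\SG 4$, which is precisely the content missing from your argument. As written, the concluding line $\ker\pi=\langle x_1^4\rangle\subseteq\langle x_1\rangle$ cannot be salvaged.
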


\begin{lemma}
\label{lem:Borbit8}
Let $x,y,x',y'\in X$ with $x\trid y\ne y$ and $x'\trid y'\ne y'$. Then
$\cO(x,x,y)$ and $\cO(x',x',y')$ are conjugate.
\end{lemma}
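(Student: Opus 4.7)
The plan is to exhibit, for any two pairs $(x,y),(x',y')\in X^2$ with $x\trid y\ne y$ and $x'\trid y'\ne y'$, an element $g\in G_X$ whose diagonal action on $X^3$ sends $(x,x,y)$ to $(x',x',y')$. Since the Hurwitz action of $\BG 3$ commutes with the diagonal action of $G_X$, any such $g$ induces a bijection $\cO(x,x,y)\to\cO(x',x',y')$, which is precisely what is needed for the two Hurwitz orbits to be conjugate. It therefore suffices to show that $G_X$ acts diagonally transitively on the set $P=\{(a,b)\in X^2\mid a\trid b\ne b\}$.

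Since $X$ is indecomposable, Remark~\ref{rem:inner} supplies some $h\in G_X$ with $h\trid x=x'$. Because $G_X$ acts on $X$ by rack automorphisms, thanks to \eqref{eq:conjugation}, the pair $(x',h\trid y)$ again belongs to $P$. Replacing $y$ by $h\trid y$ and $x$ by $x'$ reduces the problem to producing $g\in G_X$ that fixes $x$ and sends $y$ to $y'$, where now both $y$ and $y'$ are non-commuting partners of the same element $x$. By the transitivity just invoked I may further assume that $x=1$.

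For this last step the formula $\varphi_1=(2\,3\,4\,5)$ is decisive: $\varphi_1$ acts as a single $4$-cycle on $\{2,3,4,5\}$, which is precisely the set of non-commuting partners of $1$ (consistent with $k_3=4$ from Lemma~\ref{lem:b6B_centralizer}'s context). Both $y$ and $y'$ therefore lie in the same $\varphi_1$-orbit, so some power $\varphi_1^k$ carries $y$ to $y'$. Since $\varphi_1$ is the image of $x_1\in G_X$ under the canonical map $G_X\to\Inn(X)$, the element $g=x_1^k\in G_X$ satisfies $g\trid 1=1$ and $g\trid y=y'$, which finishes the proof. The only substantive point is the observation that $\varphi_x$ cycles transitively through the non-commuting partners of $x$, and this is read directly off the explicit description of $\varphi_1$ together with the transitivity of $G_X$ on $X$.
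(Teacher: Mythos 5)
Your proof is correct and follows essentially the same route as the paper's: the paper likewise observes that applying $\varphi_1$ (which cycles through $\{2,3,4,5\}$, the non-commuting partners of $1$) shows $\cO(1,1,2)$ is conjugate to $\cO(1,1,z)$ for all such $z$, and then invokes indecomposability to move an arbitrary $x$ to $1$. You have merely spelled out the reduction steps that the paper's two-sentence proof leaves implicit.
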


\begin{proof}
	By applying $\varphi_1$ we conclude that $\cO(1,1,2)$ is conjugate to
	$\cO(1,1,z)$ for all $z\in\{2,3,4,5\}=\{z'\in X\mid 1\trid z'\ne z'\}$. 
	Since $X$ is indecomposable, the claim follows.
\end{proof}

\begin{prop}
\label{prop:b6B}
Let $\rho$ be an absolutely irreducible representation of $C_{G_X}(x_1)$ and let
$V=M(x_1,\rho)$. Then $\toba(V)$ has many cubic relations if and only if
$\rho(x_1)=\rho(x_6)=-1$.
\end{prop}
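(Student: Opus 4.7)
The plan is to mimic the structure used for Propositions~\ref{prop:b4} and \ref{prop:b6A}: dispose of the ``if'' direction by invoking Table~\ref{tab:nichols}, and handle ``only if'' by combining the numerical inequality from Proposition~\ref{prop:verygi} with the optimality criterion of Proposition~\ref{pro:orbit8a}. For the ``if'' direction, when $\rho(x_1)=\rho(x_6)=-1$ the Hilbert series of $\toba(V)$ factors as a product of factors $(n)_t$ and $(n)_{t^2}$ (the entry for $\mathcal{B}$ in Table~\ref{tab:nichols}), so $\toba(V)$ has many cubic relations by Theorem~\ref{thm:main}(4)$\Rightarrow$(3).

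For the converse, assume $\toba(V)$ has many cubic relations. By Lemma~\ref{lem:b6B_centralizer} the centralizer $C_{G_X}(x_1)$ is abelian, so $e=\dim\rho=1$. For $X=\mathcal{B}$ I would read off $d=6$ and $k_3=4$ directly from $\varphi_1=(2\,3\,4\,5)$, and since $\varphi_1$ is a $4$-cycle on $\{2,3,4,5\}$ while fixing $6$, the integer $m$ of Equation~\eqref{eq:mt} vanishes. Plugging into Proposition~\ref{prop:verygi}, the inequality reduces to $48d_8+24d_1\ge 136$. By Propositions~\ref{pro:orbit1} and \ref{pro:orbit8}, for $e=1$ and $q=\rho(x_1)\ne -1$ one has $d_1\le 1$ and $d_8\le 2$, giving $48\cdot 2+24\cdot 1=120<136$. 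This contradiction forces $\rho(x_1)=-1$; then $d_1=0$ by Proposition~\ref{pro:orbit1}, so $d_8=3$ is forced, and Lemma~\ref{lem:Borbit8} (conjugacy of all $8$-orbits) implies that $(V,\cO)$ is optimal for every Hurwitz $8$-orbit $\cO\subseteq X^3$.

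Next I would apply Proposition~\ref{pro:orbit8a} to $\cO=\cO(x_1,x_1,x_2)$, taking $v_y=x_3v\in V_{x_2}$ (legitimate since $\varphi_3(1)=2$). A direct unwinding of the braiding gives $(1+c^3)(v\otimes x_3v)=(v+x_2x_1x_3\,v)\otimes x_3v$, so the optimality condition becomes $\rho(x_2x_1x_3)=-1$. The crux will be the enveloping-group identity
\begin{equation*}
  x_2x_1x_3 = x_1^2 x_6 \quad \text{in } G_X,
\end{equation*}
which I would establish by iterated use of the defining rack relations $x_ix_j=(x_i\trid x_j)x_i$: first rewrite $x_2x_1x_3=x_2x_4x_1$ via $x_1x_3=x_4x_1$, and then prove $x_2x_4=x_1x_6$ by successive substitutions, using as a guiding principle that $\varphi_2\varphi_4=\varphi_1\varphi_6=\id$, so that both sides are central elements of degree two. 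Once this identity is in hand, $\rho(x_2x_1x_3)=\rho(x_1)^2\rho(x_6)=\rho(x_6)$, whence $\rho(x_6)=-1$. The hard part is precisely this group-theoretic identity $x_2x_4=x_1x_6$: both elements agree in the finite quotient $\overline{G_X}$ and are manifestly central in $G_X$, but establishing the equality on the nose, rather than just a congruence modulo some normal subgroup, requires careful bookkeeping with the defining relations of the infinite group $G_X$.
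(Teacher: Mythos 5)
Your proposal is correct and follows essentially the same route as the paper's proof: Table~\ref{tab:nichols} for the ``if'' direction, the inequality $24d_1+48d_8\ge 136$ from Proposition~\ref{prop:verygi} (with $d=6$, $k_3=4$, $m=0$, $e=1$) to force $\rho(x_1)=-1$ and then optimality of all the (conjugate) $8$-orbits, and Proposition~\ref{pro:orbit8a} applied to $(1+c^3)(v\otimes x_3v)$ to extract $\rho(x_6)=-1$. The identity you single out as the hard part does hold in $G_X$ and is used silently in the paper in the form $x_2x_1x_3v_1=x_6x_1^2v_1$; it follows in a few lines from the defining relations (for instance $x_1x_6=x_3x_5=x_5x_3=x_2x_4$, using $x_1x_2=x_3x_1$, $x_2x_1=x_5x_2$, $x_2x_3=x_1x_2$ and $3\trid 5=5$), though note that your heuristic that both sides are central of degree two would not by itself distinguish $x_2x_4$ from $x_3x_5$.
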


\begin{remark}
The Nichols algebras of Prop.~\ref{prop:b6B} appeared first in \cite[Thm.\,6.12]{MR1994219}
over the complex numbers and in \cite[Prop.\,5.11]{Marburg} over arbitrary fields.
The Hilbert series of $\toba (V)$ is given in Table~\ref{tab:nichols}.
\end{remark}

\begin{proof}
If $\rho (x_1)=\rho (x_6)=-1$, then $\toba (V)$ has many cubic relations
by Theorem~\ref{thm:main}(4)$\Rightarrow $(3)
and Table~\ref{tab:nichols}.

Assume that $\toba(V)$ has many cubic relations. 
By Lemma~\ref{lem:b6B_centralizer}, the group $C_{G_X}(x_1)$ is abelian. 
Hence the degree of $\rho$ is $e=1$. Let $d_1,d_8$ be as in Proposition~\ref{prop:verygi}.
Since $d=6$, $k_3=4$ and $m=0$, Proposition \ref{prop:verygi} implies that
\eqref{eq:b6A} holds.
If $q\not=-1$, then we may assume that $d_8<3$
by Proposition~\ref{pro:orbit8}. This is a contradiction
to \eqref{eq:b6A}. Hence $\rho (x_1)=-1$.
Assume that $\rho(x_6)\ne-1$. Then $$(1+c_{12}^3)(v_1\otimes v_2)\ne 0$$ 
for $v_1\in V_{x_1}\setminus\{0\}$ and $v_2=x_3v_1\in V_{x_2}$. 
Indeed, we obtain that  
\begin{align*}
(1+c_{12}^3)(v_1\otimes v_2)&=v_1\otimes x_3v_1+x_2x_1x_3v_1\otimes x_3v_1\\
&=(v_1+x_6x_1^2v_1)\otimes x_3v_1=(v_1+x_6v_1)\otimes x_3v_1.
\end{align*}
Since all Hurwitz orbits of size $8$ are conjugate by Lemma~\ref{lem:Borbit8},
we again may assume that $d_8<3$ by Proposition~\ref{pro:orbit8}. This yields a contradiction to \eqref{eq:b6A}.
\end{proof}

\subsection{The rack $\mathcal{C}$}
\label{sec:C}

In order to avoid confusion,
let $X=\{x_1,x_2,\dots ,x_{10}\}=\mathcal{C}$. The
size of $X$ is $d=10$. The rack $X$ can be seen as the rack of transpositions in $\SG 5$.
We identify the elements of $X$ with transpositions as follows:
$x_1=(1\,2)$, $x_2=(2\,3)$, $x_3=(1\,3)$, $x_4=(2\,4)$, $x_5=(1\,4)$, $x_6=(2\,5)$, $x_7=(1\,5)$,
$x_8=(3\,4)$, $x_9=(3\,5)$, $x_{10}=(4\,5)$.

\begin{lemma} \cite[Lemma 5.8]{Marburg}
\label{lem:bC_centralizer}
The centralizer of $x_1$ in $G_X$ is the non-abelian subgroup generated by $x_1,x_8,x_9$.
These generators satisfy $x_1^2=x_8^2=x_9^2$, $x_2x_8=x_8x_2$, $x_2x_9=x_9x_2$, $x_8x_9x_8=x_9x_8x_9$.
\end{lemma}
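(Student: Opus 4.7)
The plan is to follow the pattern of the preceding centralizer lemmas \cite[Lem.\,5.5, 5.8, 5.10]{Marburg} in three steps.

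First, I would verify the inclusion $\langle x_1, x_8, x_9 \rangle \subseteq C_{G_X}(x_1)$. Since the transpositions $(1\,2), (3\,4), (3\,5)$ all commute with $(1\,2)$ in $\SG 5$, the permutation $\varphi_1$ fixes each of $1, 8, 9$. The defining relation $x_1 x_j = (x_1 \trid x_j)\,x_1$ of $G_X$ then immediately gives $x_1 x_j = x_j x_1$ for $j \in \{1, 8, 9\}$.

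Second, I would establish the auxiliary relations. Since $x_8 \trid x_9 = (4\,5) = x_{10} = x_9 \trid x_8$, the defining relations produce $x_8 x_9 = x_{10} x_8$ and $x_9 x_8 = x_{10} x_9$ in $G_X$, whence $x_8 x_9 x_8 = x_{10} x_8^2$ and $x_9 x_8 x_9 = x_{10} x_9^2$; the desired braid relation therefore reduces to $x_8^2 = x_9^2$. To obtain $x_1^2 = x_8^2 = x_9^2$, one shows that $x_i^2$ is central in $G_X$ for every $i$: the rack has degree $2$, so $\varphi_i^2 = \id_X$ and hence $x_i^2 x_j x_i^{-2} = x_i \trid(x_i \trid x_j) = x_j$ for all $j$; the $x_i^2$ being both central and pairwise conjugate in $G_X$ (by indecomposability of $X$), they all coincide.

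The third step, showing $C_{G_X}(x_1) \subseteq \langle x_1, x_8, x_9 \rangle$, is the principal obstacle. The plan is to exploit the natural surjection $\pi : G_X \twoheadrightarrow \SG 5$ sending each generator to its transposition. Since $\pi$ intertwines the conjugation action of $G_X$ on $X$ with that of $\SG 5$ on its transpositions, and since $\pi|_X$ is injective, one obtains $C_{G_X}(x_1) = \pi^{-1}(C_{\SG 5}((1\,2)))$; the centralizer on the right equals $\langle (1\,2), (3\,4), (3\,5)\rangle$, which is precisely the $\pi$-image of $\langle x_1, x_8, x_9\rangle$. It therefore remains to check that $\ker \pi \subseteq \langle x_1, x_8, x_9 \rangle$. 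The same injectivity argument forces $\ker \pi$ to be central in $G_X$: if $g \in \ker \pi$, then for every $i$ the element $g x_i g^{-1}$ lies in $X$ and has the same $\pi$-image as $x_i$, hence equals $x_i$. A coset enumeration (or Reidemeister--Schreier computation) based on a system of $10$ representatives $g_i$ with $g_i x_1 g_i^{-1} = x_i$ then identifies $\ker \pi$ with $\langle x_1^2\rangle \subseteq \langle x_1, x_8, x_9\rangle$. Carrying out this enumeration cleanly, and in particular checking that every rewriting of a product of generators returns a word supported in $\{x_1, x_8, x_9\}$ modulo the established central squares, is the most delicate point of the argument.
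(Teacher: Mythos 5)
The paper offers no internal proof of this lemma; it is quoted from \cite[Lemma\,5.8]{Marburg}, so the only comparison possible is with the standard argument there, which your outline essentially reproduces. Your three steps are all sound: the inclusion $\langle x_1,x_8,x_9\rangle\subseteq C_{G_X}(x_1)$ from the defining relations, the reduction of the braid relation to $x_8^2=x_9^2$ via $x_8\trid x_9=x_9\trid x_8=x_{10}$, the centrality of all $x_i^2$ from $\varphi_i^2=\id$ together with their conjugacy in $G_X$, and the identification $C_{G_X}(x_1)=\pi^{-1}(C_{\SG 5}((1\,2)))$ using that $\ker\pi$ acts trivially on $X$ because $\pi|_X$ is injective. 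The one piece you leave open, $\ker\pi=\langle x_1^2\rangle$, does not actually require a coset enumeration: since $\ker\pi$ is central and contains $x_1^2$, it suffices to know that $G_X/\langle x_1^2\rangle\cong\SG 5$, and this is exactly the statement $\overline{G_X}\simeq\SG 5$ recorded in Example~\ref{example:deg2_size10}; alternatively it follows from the $3$-transposition classification ($F(4,10)\simeq\SG 5$ is the \emph{largest} such group on four generators, and $G_X/\langle x_1^2\rangle$ is one of them surjecting onto $\SG 5$). Quoting either of these closes your argument without any computation, so I would not call the deferral a genuine gap, only an unnecessary hedge.

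Two small points you should make explicit. First, the relations $x_2x_8=x_8x_2$ and $x_2x_9=x_9x_2$ as printed in the lemma are false: already in $\SG 5$ one has $(2\,3)(3\,4)\ne(3\,4)(2\,3)$, so these must be misprints for $x_1x_8=x_8x_1$ and $x_1x_9=x_9x_1$, which is what your first step proves; it is worth saying so rather than silently substituting. Second, the assertion that the subgroup is non-abelian deserves a sentence: it follows at once because its image $C_{\SG 5}((1\,2))\cong\mathbb{Z}_2\times\SG 3$ is non-abelian.
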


\begin{prop}
\label{prop:bC}
Let $\rho$ be an absolutely irreducible representation of $C_{G_X}(x_1)$ and let
$V=M(x_1,\rho)$. Then $\toba(V)$ has many cubic relations if and only if
$\rho(x_1)=-1$ and $\rho(x_8)=\rho(x_9)=\pm1$. 
\end{prop}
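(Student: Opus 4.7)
The plan is to follow the same template as the proofs for $\mathcal{A}$ and $\mathcal{B}$ in Sections~\ref{sec:A} and \ref{sec:B}, with one extra step needed because the centralizer $C_{G_X}(x_1)$ is non-abelian. For the easy direction, if $\rho(x_1)=-1$ and $\rho(x_8)=\rho(x_9)=\pm 1$, then $V$ appears in Table~\ref{tab:nichols} and Theorem~\ref{thm:main}(4)$\Rightarrow$(3) gives that $\toba(V)$ has many cubic relations.

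For the converse, assume $\toba(V)$ has many cubic relations and first record the invariants of $X=\mathcal{C}$: the size is $d=10$ and $k_3=6$ (see Table~\ref{tab:deg2_k3}). Since $\varphi_1$ has order two, $\varphi_1^3=\varphi_1$, so the condition ``$1\trid x\ne x$ and $1\trid^3 x=x$'' is vacuous and $m=0$ in the notation of Proposition~\ref{pro:sizes}. Because $k_3=6>3$, Proposition~\ref{pro:gen_ineq} immediately forces $e=\dim V_{x_1}=1$; this is precisely the place where the abelian-centralizer shortcut used for $\mathcal{A}$ and $\mathcal{B}$ is replaced by the general inequality.

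Next I would set $q=\rho(x_1)$ and substitute $d=10$, $k_3=6$, $m=0$, $e=1$ into the inequality of Proposition~\ref{prop:verygi}. A direct simplification reduces it to $3d_8+d_1\ge 9$. Propositions~\ref{pro:orbit1} and \ref{pro:orbit8} give $d_1\le 1$ for every $q$ when $e=1$, and $d_8\le 2$ whenever $q\ne -1$. If $q\ne -1$ this forces $3d_8+d_1\le 7$, a contradiction. Hence $\rho(x_1)=-1$.

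To conclude, since $e=1$ the representation $\rho$ is a character. The relation $x_1^2=x_8^2=x_9^2$ from Lemma~\ref{lem:bC_centralizer} gives $\rho(x_8)^2=\rho(x_9)^2=1$, so $\rho(x_8),\rho(x_9)\in\{-1,1\}$. Applying $\rho$ to the braid relation $x_8x_9x_8=x_9x_8x_9$ yields $\rho(x_8)^2\rho(x_9)=\rho(x_9)^2\rho(x_8)$, which in $\{\pm 1\}$ reduces to $\rho(x_8)=\rho(x_9)$. The only spot requiring some care, rather than a genuine obstacle, is checking that the bounds $d_1\le 1$ and $d_8\le 2$ cover \emph{all} values $q\ne -1$, including the small-characteristic cases coming from cube and sixth roots of unity listed in Proposition~\ref{pro:orbit1}.
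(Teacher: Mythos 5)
Your proposal is correct and follows essentially the same route as the paper: Theorem~\ref{thm:main}(4)$\Rightarrow$(3) for the easy direction, $e=1$ from $k_3=6>3$ via Proposition~\ref{pro:gen_ineq}, the inequality $24d_1+72d_8\ge 216$ (equivalently your $3d_8+d_1\ge 9$) from Proposition~\ref{prop:verygi} combined with the bounds of Propositions~\ref{pro:orbit1} and \ref{pro:orbit8} to force $q=-1$, and finally the relations of Lemma~\ref{lem:bC_centralizer} to pin down $\rho(x_8)=\rho(x_9)=\pm1$. Your closing worry is unfounded: Proposition~\ref{pro:orbit1} gives $d_1\le\frac13 e(e^2+2)=1$ uniformly in $q$ and $\charf$, and Proposition~\ref{pro:orbit8} gives $d_8\le 2$ for every $q\ne-1$, so the case analysis is already covered.
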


\begin{remark}
The Nichols algebras of Proposition~\ref{prop:bC} appeared first in \cite{MR1667680} 
for $\rho(x_8)=1$ and in \cite{zoo} for $\rho(x_8)=-1$.
These two Nichols algebras are twist equivalent, see \cite{twisting}.
Their Hilbert series are given in Table~\ref{tab:nichols}.
\end{remark}

\begin{proof}
If $\rho (x_1)=-1$ and $\rho (x_8)=\rho (x_9)=\pm 1$,
then $\toba (V)$ has many cubic relations
by Theorem~\ref{thm:main}(4)$\Rightarrow $(3) and Table~\ref{tab:nichols}.

Assume that $\toba(V)$ has many cubic relations. Since $k_3=6$, the argument at the beginning
of Section~\ref{sec:proof} yields that $e=1$.
Let $d_1,d_8$ be as in Proposition~\ref{prop:verygi}.
Since $d=10$, $k_3=6$ and $m=0$, Proposition \ref{prop:verygi} implies that
\begin{align}
\label{eq:b10}
24d_1+72d_8\ge 216.
\end{align}
If $q\not=-1$, then we may assume that $d_8<3$
by Proposition~\ref{pro:orbit8}. This is a contradiction
to \eqref{eq:b10}. Hence $\rho (x_1)=-1$.
Since $x_1^2=x_8^2=x_9^2$ and $x_8x_9x_8=x_9x_8x_9$ by Lemma~\ref{lem:bC_centralizer},
we conclude that $\rho (x_8)=\rho (x_9)=\pm 1$.
\end{proof}

\subsection{The racks $\mathrm{Aff}(7,3)$ and $\mathrm{Aff}(7,5)$}
\label{sec:aff7}
Let $X=\mathrm{Aff}(7,3)$ or $X=\mathrm{Aff}(7,5)$ with $X=\{1,2,\dots ,7\}$ and let $d=\#X=7$.

\begin{prop}
\label{prop:b7}
Let $\rho$ be an absolutely irreducible representation of $C_{G_X}(x_1)$ and let
$V=M(x_1,\rho)$. Then $\toba(V)$ has many cubic relations if and only if
$\rho(x_1)=-1$. 
\end{prop}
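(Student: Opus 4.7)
The plan is to follow the template used for Propositions \ref{prop:b6A}, \ref{prop:b6B} and \ref{prop:bC}, which for racks of this kind reduces the analysis to a single numerical inequality.

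For the easy direction, if $\rho(x_{1})=-1$ then the Yetter-Drinfeld module $V=M(x_{1},\rho)$ should appear in Table~\ref{tab:nichols} (the entries for $\mathrm{Aff}(7,3)$ and $\mathrm{Aff}(7,5)$), with Hilbert series a product of factors $(n)_{t}$ and $(n)_{t^2}$. Invoking Theorem~\ref{thm:main}(4)$\Rightarrow$(3) then shows that $\toba(V)$ has many cubic relations.

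For the hard direction, I first collect the rack invariants. Both $\mathrm{Aff}(7,3)$ and $\mathrm{Aff}(7,5)$ are braided indecomposable of degree $6$ with $d=7$, so $\varphi_{1}$ is a $6$-cycle (with $x_{1}$ fixed) and hence $k_{3}=6$. A short check using $y\mapsto\varphi_{1}^{3}(y)$ in $\mathbb{F}_{7}$ shows that no $y\neq x_{1}$ with $\varphi_{1}(y)\neq y$ satisfies $\varphi_{1}^{3}(y)=y$, so $m=0$. Next I apply Proposition~\ref{pro:gen_ineq}: since $k_{3}=6>3$, the degree $e=\dim V_{x_{1}}$ of $\rho$ must be $1$. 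Now Proposition~\ref{prop:verygi}, with $d=7$, $k_{3}=6$, $e=1$, $m=0$, reduces \eqref{eq:gen_ineq18} to
\begin{equation*}
72\,d_{8}+24\,d_{1}\geq 216.
\end{equation*}

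To conclude, suppose for contradiction that $q:=\rho(x_{1})\neq-1$. Proposition~\ref{pro:orbit8}(2) gives $d_{8}\leq e^{2}(5e-1)/2=2$, while Proposition~\ref{pro:orbit1} (inspecting each case for $e=1$) gives $d_{1}\leq 1$. Then $72\,d_{8}+24\,d_{1}\leq 168<216$, contradicting the displayed inequality. Hence $\rho(x_{1})=-1$.

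The only non-routine step is the forward direction: it depends on the explicit entries of Table~\ref{tab:nichols} for the affine racks of order $7$, whose Hilbert series must be checked to have the required factorization. The reverse direction is purely a counting argument with the invariants $d$, $k_{3}$, $m$, and is straightforward once $e=1$ is forced by Proposition~\ref{pro:gen_ineq}; no case analysis of the centralizer $C_{G_{X}}(x_{1})$ is needed because the bound $e=1$ bypasses the issue of whether this centralizer is abelian.
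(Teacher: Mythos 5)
Your proof is correct and follows essentially the same route as the paper: the forward direction via Table~\ref{tab:nichols} and Theorem~\ref{thm:main}(4)$\Rightarrow$(3), and the converse via Proposition~\ref{prop:verygi} with $d=7$, $k_3=6$, $m=0$, $e=1$, yielding $72d_8+24d_1\ge216$, which fails when $q\ne-1$ since then $d_8\le2$ and $d_1\le1$. The only (harmless) variation is how you force $e=1$: the paper cites \cite[Lemma 5.14]{Marburg} to get that $C_{G_X}(x_1)$ is cyclic, whereas you use $k_3=6>3$ together with Proposition~\ref{pro:gen_ineq} --- exactly the argument the paper itself uses for the rack $\mathcal{C}$.
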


\begin{remark}
The Nichols algebras with many cubic relations in Proposition~\ref{prop:b7} appeared first
in \cite{zoo} over $\C$ and over arbitrary fields in \cite[Prop.\,5.15]{Marburg}.
The Hilbert series of $\toba (V)$ is given in Table~\ref{tab:nichols}.
\end{remark}

\begin{proof}
If $\rho (x_1)=-1$ and $\rho (x_8)=\rho (x_9)=\pm 1$,
then $\toba (V)$ has many cubic relations
by Theorem~\ref{thm:main}(4)$\Rightarrow $(3) and Table~\ref{tab:nichols}.

Assume that $\toba(V)$ has many cubic relations. 
By \cite[Lemma 5.14]{Marburg}, the group $C_{G_X}(x_1)$ is cyclic and it is generated by $x_1$.
Hence the degree of $\rho$ is $e=1$. 
Let $d_1,d_8$ be as in Proposition~\ref{prop:verygi}.
Since $d=7$, $k_3=6$ and $m=0$, Proposition~\ref{prop:verygi} implies that \eqref{eq:b10} holds.
If $q\not=-1$, then we may assume that $d_8<3$
by Proposition~\ref{pro:orbit8}. This is a contradiction
to \eqref{eq:b10}. Hence $\rho (x_1)=-1$.
\end{proof}

\newpage

\section{Appendix A. Braided racks and Nichols algebras}
\label{sec:tables}

Tables~\ref{tab:nichols}, \ref{tab:candc} and \ref{tab:braided_racks}
contain data of finite-dimensional Nichols algebras over
groups which have a non-trivial indecomposable braided rack as support.

\begin{table}[H]
\begin{center}
\begin{tabular}{|r|r|r|l|l|}
\hline 
Rack & Rank & Dimension & Hilbert series & Remark \tabularnewline
\hline 
$\mathbb{D}_3$ & 3 & $12$ & $(2)^2_t (3)_t$ & \S\ref{sec:D3}\\
\hline
$\mathbb{D}_3$ & 3 & $432$ & $(3)_t(4)_t(6)_t(6)_{t^2}$ & Prop.~\ref{prop:newD3}, $\charf=2$\\
\hline
$\mathcal{T}$ & 4 & $36$ & $(2)^2_t (3)^2_t$ & \S\ref{sec:T}, $\charf=2$\\
\hline
$\mathcal{T}$ & 4 & $72$ & $(2)^2_t (3)_t (6)_{t}$ & \S\ref{sec:T}, $\charf\ne2$ \\
\hline
$\mathcal{T}$ & 4 & $5184$ & $(6)^4_t (2)^2_{t^2}$ & Prop.\,\ref{prop:newT} \\
\hline
$\mathcal{A}$ & 6 & $576$ & $(2)^2_t (3)^2_t (4)^2_t$ & \S\ref{sec:A} \\
\hline
$\mathcal{B}$ & 6 & $576$ & $(2)^2_t (3)^2_t (4)^2_t$ & \S\ref{sec:B} \\
\hline
$\mathrm{Aff}(7,3)$ & 7 & $326592$ & $(6)^6_{t} (7)_t$ & \S\ref{sec:aff7}\\
\hline
$\mathrm{Aff}(7,5)$ & 7 & $326592$ & $(6)^6_{t} (7)_t$ & \S\ref{sec:aff7}\\
\hline
$\mathcal{C}$ & 10 & $8294400$ & $(4)^4_t (5)^2_t (6)^4_t$ & \S\ref{sec:C}\\
\hline
\end{tabular}
\caption[aboveskip]{Finite-dimensional Nichols algebras}
\label{tab:nichols}
\end{center}
\end{table}

\begin{table}[H]
\begin{center}
\begin{tabular}{|c|c|c|}
\hline 
Rack & Generators of $C_{G_X}(x_1)$ & Linear character $\rho$ on $C_{G_X}(x_1)$ \tabularnewline
\hline 
$\mathbb{D}_3$ & $x_1$ & $\rho(x_1)=-1$\\
\hline 
$\mathbb{D}_3$ & $x_1$ & $\charf =2$, $\rho(x_1)^2+\rho (x_1)+1=0$\\
\hline 
$\mathcal{T}$ & $x_1,\;x_4x_2$ & $\rho(x_1)=-1,\;\rho(x_4x_2)=1$\\
\hline 
$\mathcal{T}$ & $x_1,\;x_4x_2$ & $\rho(x_1)^2+\rho (x_1)+1=0,\;\rho(x_4x_2x_1)=-1$\\
\hline 
$\mathcal{A}$ & $x_1,\;x_4$& $\rho(x_1)=-1,\;\rho(x_4)=\pm1$\\
\hline 
$\mathcal{B}$ & $x_1,\;x_6$ & $\rho(x_1)=\rho(x_6)=-1$\\
\hline
$\mathrm{Aff}(7,3)$ & $x_1$ & $\rho(x_1)=-1$\\
\hline 
$\mathrm{Aff}(7,5)$ & $x_1$ & $\rho(x_1)=-1$\\
\hline 
$\mathcal{C}$ & $x_1,\;x_8,\;x_9$ & $\rho(x_1)=-1,\;\rho(x_8)=\rho(x_9)=\pm1$\\
\hline
\end{tabular}
\caption[aboveskip]{Centralizers and characters}
\label{tab:candc}
\end{center}
\end{table}

\begin{table}[H]
\begin{center}
\begin{tabular}{|c|c|c|c|c|c|}
\hline
Rack & deg & size & $k_3$ & $m$ & Reference \\
\hline
$\mathbb{D}_3$ & $2$ & $3$ & $2$ & $0$ & Example \ref{example:deg2_size3} \\\hline
$\mathcal{T}$ & $3$ & $4$ & $3$ & $3$ & Prop.~\ref{prop:deg3} \\\hline
$\mathcal{A}$ & $2$ & $6$ & $4$ & $0$ & Example \ref{example:deg2_size6} \\\hline
$\mathcal{B}$ & $4$ & $6$ & $4$ & $0$ & Prop.~\ref{prop:deg4} \\\hline
$\mathcal{C}$ & $2$ & $10$ & $6$ & $0$ & Example \ref{example:deg2_size10} \\\hline
$\mathrm{Aff}(7,3)$ & $6$ & $7$ & $6$ & $0$ & Prop.~\ref{prop:deg6} \\\hline
$\mathrm{Aff}(7,5)$ & $6$ & $7$ & $6$ & $0$ & Prop.~\ref{prop:deg6} \\\hline
\end{tabular}
\caption[aboveskip]{Indecomposable braided racks occuring with Nichols algebras
with many cubic relations.}
\label{tab:braided_racks}
\end{center}
\end{table}


\section{Appendix B. Hurwitz orbits of braided racks}
\label{sec:Ho}

With Figures~\ref{fig:H3}--\ref{fig:H24} we present the isomorphism classes
of nontrivial Hurwitz orbits of braided racks. There are
nontrivial Hurwitz orbits of size 3, 6, 8, 9, 12, 16 and 24.

\begin{figure}[H]
\begin{center}
\pspicture(0,0)(0,0.7)
\psmatrix[colsep=1, mnode=circle]
A&B&C
\nccircle[linestyle=dotted, nodesep=4pt]{->}{1,1}{.4cm}
\ncline{<->}{1,1}{1,2}
\ncline[linestyle=dotted]{<->}{1,2}{1,3}
\nccircle[nodesep=4pt]{->}{1,3}{.4cm}
\endpsmatrix
\endpspicture
\caption{The Hurwitz orbit of size $3$}
\label{fig:H3}
\end{center}
\end{figure}

\begin{figure}[H]
\begin{center}
\pspicture(0,0)(0,0.7)
\psmatrix[colsep=1, mnode=circle]
A&B&C&D&E&F
\ncline{<->}{1,1}{1,2}
\ncline[linestyle=dotted]{<->}{1,2}{1,3}
\ncline{<->}{1,3}{1,4}
\ncline[linestyle=dotted]{<->}{1,4}{1,5}
\ncline{<->}{1,5}{1,6}
\ncarc[arcangle=-20, linestyle=dotted]{<->}{1,6}{1,1}
\endpsmatrix
\endpspicture
\caption{The Hurwitz orbit of size $6$}
\label{fig:H6}
\end{center}
\end{figure}

\begin{figure}[H]
\begin{center}
\hfill
\begin{minipage}{5cm}
\pspicture(0,-.5)(0,3.5)
\psmatrix[colsep=1, rowsep=0.7, mnode=circle]
A&B&C\\
D& &E\\
F&G&H
\nccircle[linestyle=dotted, nodesep=4pt]{->}{1,1}{.4cm}
\nccircle[nodesep=4pt]{->}{1,3}{.4cm}
\nccircle[nodesep=4pt]{->}{3,1}{-.4cm}
\nccircle[linestyle=dotted, nodesep=4pt]{->}{3,3}{-.4cm}
\ncline{->}{1,1}{1,2}
\ncline{->}{1,2}{2,1}
\ncline{->}{2,1}{1,1}
\ncline[linestyle=dotted]{->}{1,2}{1,3}
\ncline[linestyle=dotted]{->}{1,3}{2,3}
\ncline[linestyle=dotted]{->}{2,3}{1,2}
\ncline{->}{2,3}{3,3}
\ncline{->}{3,3}{3,2}
\ncline{->}{3,2}{2,3}
\ncline[linestyle=dotted]{->}{3,2}{3,1}
\ncline[linestyle=dotted]{->}{3,1}{2,1}
\ncline[linestyle=dotted]{->}{2,1}{3,2}
\endpsmatrix
\endpspicture
\caption{The Hurwitz orbit of size $8$}
\label{fig:H8}
\end{minipage}
\hfill
\begin{minipage}{5cm}
\pspicture(0,-.5)(0,3.5)
\psmatrix[colsep=1, rowsep=0.7, mnode=circle]
A&B&C\\
D&E&F\\
G&H&I
\ncline{->}{1,1}{1,2}
\ncline{->}{1,2}{1,3}
\ncarc[arcangle=-30]{->}{1,3}{1,1}
\ncline[linestyle=dotted]{<->}{1,1}{2,1}
\ncline[linestyle=dotted]{<->}{1,2}{2,2}
\ncline[linestyle=dotted]{<->}{1,3}{2,3}
\ncline{<->}{2,1}{3,1}
\ncline{<->}{2,2}{3,2}
\ncline{<->}{2,3}{3,3}
\ncline[linestyle=dotted]{->}{3,1}{3,2}
\ncline[linestyle=dotted]{->}{3,2}{3,3}
\ncarc[arcangle=30, linestyle=dotted]{->}{3,3}{3,1}
\endpsmatrix
\endpspicture
\caption{The Hurwitz orbit of size $9$}
\label{fig:H9}
\end{minipage}
\hfill
\end{center}
\end{figure}

\begin{figure}[H]
\begin{center}
\pspicture(0,0)(0,5)
\psmatrix[colsep=0.5, rowsep=0.3, mnode=circle]
A&&B&&C&&D\\
&E&&F&&G\\
&&H&&I\\
&&J&&K\\
&&&L
\ncline[linestyle=dotted]{->}{1,3}{1,1}
\ncline[linestyle=dotted]{->}{1,1}{2,2}
\ncline[linestyle=dotted]{->}{2,2}{1,3}
\ncline{->}{1,3}{1,5}
\ncline{->}{1,5}{2,4}
\ncline{->}{2,4}{1,3}
\ncline[linestyle=dotted]{->}{1,7}{1,5}
\ncline[linestyle=dotted]{->}{1,5}{2,6}
\ncline[linestyle=dotted]{->}{2,6}{1,7}
\ncline{->}{2,2}{3,3}
\ncline{->}{3,3}{4,3}
\ncline{->}{4,3}{2,2}
\ncline[linestyle=dotted]{->}{2,4}{3,3}
\ncline[linestyle=dotted]{->}{3,3}{3,5}
\ncline[linestyle=dotted]{->}{3,5}{2,4}
\ncline{->}{2,6}{4,5}
\ncline{->}{4,5}{3,5}
\ncline{->}{3,5}{2,6}
\ncline[linestyle=dotted]{->}{4,5}{4,3}
\ncline[linestyle=dotted]{->}{4,3}{5,4}
\ncline[linestyle=dotted]{->}{5,4}{4,5}
\ncarc[arcangle=-40]{->}{1,1}{5,4}
\ncarc[arcangle=-40]{->}{5,4}{1,7}
\ncarc[arcangle=-30]{->}{1,7}{1,1}
\endpsmatrix
\endpspicture
\caption{The Hurwitz orbit of size $12$}
\label{fig:H12}
\end{center}
\end{figure}

\begin{figure}[H]
\begin{center}
\pspicture(0,0)(0,7)
\psmatrix[colsep=0.5, rowsep=0.7, mnode=circle]
&A&&B&&C\\
D&&E&&F&&G\\
H&&&&&&I\\
J&&K&&L&&M\\
&N&&O&&P
\ncline{->}{1,2}{1,4}
\ncline{->}{1,4}{2,3}
\ncline{->}{2,3}{1,2}
\ncline{->}{1,6}{2,7}
\ncline{->}{2,7}{3,7}
\ncline{->}{3,7}{1,6}
\ncline{->}{2,1}{4,7}
\ncline{->}{4,7}{5,6}
\ncline{->}{5,6}{2,1}
\ncline{<->}{3,1}{4,1}
\ncline{->}{5,2}{5,4}
\ncline{->}{5,4}{4,3}
\ncline{->}{4,3}{5,2}
\ncline{<->}{4,5}{2,5}
\ncline[linestyle=dotted]{->}{1,4}{1,6}
\ncline[linestyle=dotted]{->}{1,6}{2,5}
\ncline[linestyle=dotted]{->}{2,5}{1,4}
\ncline[linestyle=dotted]{->}{1,2}{3,1}
\ncline[linestyle=dotted]{->}{3,1}{2,1}
\ncline[linestyle=dotted]{->}{2,1}{1,2}
\ncline[linestyle=dotted]{<->}{2,3}{4,3}
\ncline[linestyle=dotted]{<->}{3,7}{4,7}
\ncline[linestyle=dotted]{->}{2,7}{5,2}
\ncline[linestyle=dotted]{->}{5,2}{4,1}
\ncline[linestyle=dotted]{->}{4,1}{2,7}
\ncline[linestyle=dotted]{->}{4,5}{5,4}
\ncline[linestyle=dotted]{->}{5,4}{5,6}
\ncline[linestyle=dotted]{->}{5,6}{4,5}
\endpsmatrix
\endpspicture
\caption{The Hurwitz orbit of size $16$}
\label{fig:H16}
\end{center}
\end{figure}

\begin{figure}[H]
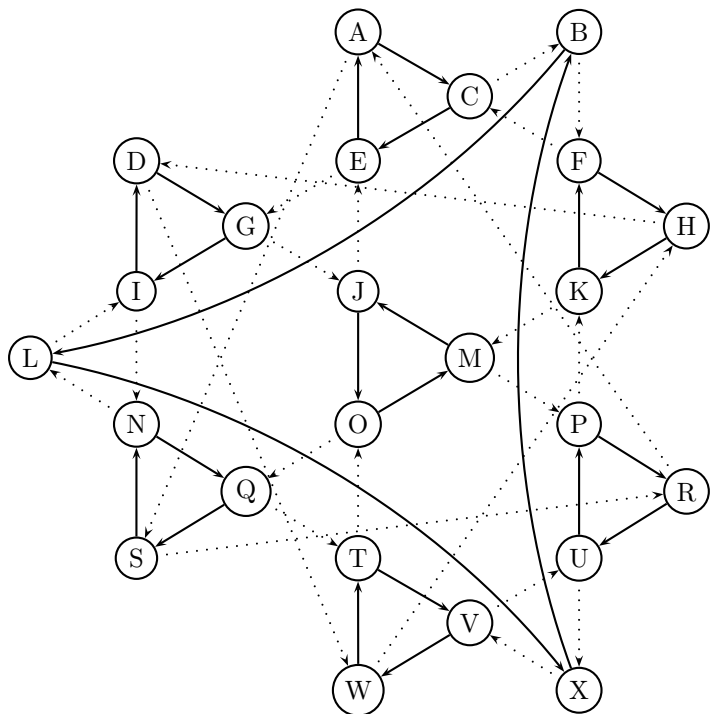

\begin{center}
\pspicture(0,0)(0,9)
\psmatrix[colsep=0.8, rowsep=0.2, mnode=circle]
&&&A&&B\\
&&&&C\\
&D&&E&&F\\
&&G&&&&H\\
&I&&J&&K\\
L&&&&M\\
&N&&O&&P\\
&&Q&&&&R\\
&S&&T&&U\\
&&&&V\\
&&&W&&X
\striangle142534
\striangle324352
\striangle364756
\striangle547465
\striangle728392
\striangle768796
\striangle94{10}5{11}4
\ttriangle163625
\ttriangle344354
\ttriangle527261
\ttriangle566576
\ttriangle748394
\ttriangle96{11}6{10}5
\ncline[linestyle=dotted]{->}{1,4}{9,2}
\ncline[linestyle=dotted]{->}{9,2}{8,7}
\ncline[linestyle=dotted]{->}{8,7}{1,4}
\ncline[linestyle=dotted]{->}{3,2}{11,4}
\ncline[linestyle=dotted]{->}{11,4}{4,7}
\ncline[linestyle=dotted]{->}{4,7}{3,2}
\ncarc[arcangle=20]{->}{1,6}{6,1}
\ncarc[arcangle=20]{->}{6,1}{11,6}
\ncarc[arcangle=20]{->}{11,6}{1,6}
\endpsmatrix
\endpspicture
\caption{The Hurwitz orbit of size $24$}
\label{fig:H24}
\end{center}
\end{figure}


\clearpage

\textbf{Acknowledgement.}
We are grateful to Gunter Malle for introducing us $3$-transpo\-sition
groups. Many thanks are going to Volkmar Welker for providing us with the
references to percolation theory. Finally, we thank the referees
for their careful reading and for their hints leading to improvements of the paper.

I.\,H. was supported by the German Research Foundation via a Heisenberg
fellowship.  L.\,V. was supported by CONICET and DAAD. L.\,V. thanks
Philipps-Universit\"at Marburg for the support of his visit from
December 2010 to April 2011.

\bigskip

\newcommand{\etalchar}[1]{$^{#1}$}
\def\cprime{$'$}
\providecommand{\bysame}{\leavevmode\hbox to3em{\hrulefill}\thinspace}
\providecommand{\MR}{\relax\ifhmode\unskip\space\fi MR }
\providecommand{\MRhref}[2]{%
  \href{http://www.ams.org/mathscinet-getitem?mr=#1}{#2}
}
\providecommand{\href}[2]{#2}

\end{document}